\newtheorem{claim}{Claim}[section]
\newtheorem{thm}{Theorem}[section]
\newtheorem{cor}[thm]{Corollary}
\newtheorem{lem}[thm]{Lemma}
\newtheorem{prop}[thm]{Proposition}
\newtheorem{defn}[thm]{Definition}
\newtheorem{rem}[thm]{\bf{Remark}}
\numberwithin{equation}{section}
\DeclareMathOperator \Vol{Vol}
\DeclareMathOperator \Id{Id}
\DeclareMathOperator \rank{rank}
\DeclareMathOperator \dif{d\!}
\DeclareMathOperator \real{\bf Re}
\DeclareMathOperator \Herm{Herm}
\DeclareMathOperator \End{End}
\DeclareMathOperator \cond{cond}
\DeclareMathOperator \tr{tr}
\DeclareMathOperator \diag{diag}
\def \pbp {\partial\bar\partial}
\begin{document}
\title{Long-time behavior of the Hermitian-Yang-Mills flow on non-K\"ahler manifolds}

\author[Zeng Chen]{Zeng Chen}
\address{University of Science and Technology of China, Hefei, 230026, P.R. China} \email{ustcchze@mail.ustc.edu.cn}
\author[Chao Li]{CHao Li}
\address{School of Mathematics and Statistics, Nanjing University of Science and Technology, Nanjing, 210094, P.R.China}\email{leecryst@mail.ustc.edu.cn}
\author[Chuanjing Zhang]{Chuanjing Zhang}
\address{School of Mathematics and Statistics, Ningbo University, Ningbo, 315211, P.R. China} \email{zhangchuanjing@nbu.edu.cn}
\author[Xi Zhang]{Xi Zhang}
\address{School of Mathematics and Statistics, Nanjing University of Science and Technology, Nanjing, 210094, P.R.China} \email[Corresponding author]{mathzx@njust.edu.cn}

\begin{abstract}
In this paper, we study the long-time behavior of the Hermitian-Yang-Mills flow over compact Hermitian manifolds. We obtain the  monotonicity of lower bound and upper bound of the eigenvalues of the mean curvature  along the Hermitian-Yang-Mills flow. In the Gauduchon case, we show that the eigenvalues of the mean curvature  converge to geometric invariants determined by the Harder-Narasimhan type. Furthermore, we generalize the Atiyah-Bott-Bando-Siu question to the non-K\"ahler case.
\end{abstract}

\subjclass[2020]{53C07, 14J60, 32Q15 }
\thanks{ The research was supported by the National Key R and D Program of China 2020YFA0713100. The  authors are partially supported by NSF in China No.12141104, 12371062 and 12431004, the China Postdoctoral Science Foundation (No.2018M640583).}
\maketitle

\section{ Introduction}

Let $(M, \omega)$ be an $n$-dimensional compact Hermitian manifold, $(E, \overline{\partial}_E)$ a holomorphic vector bundle. In order to find a Hermitian-Einstein metric, Donaldson (\cite{DON85}) introduced the following evolution equation for the Hermitian metric  $H(t)$ on $(E, \overline{\partial}_E)$, which is called the Hermitian-Yang-Mills flow,
\begin{equation} \label{Flow}
H^{-1}(t)\frac{\partial H(t)}{\partial t}=-2(\sqrt{-1}\Lambda_{\omega } F_{H(t)}-\lambda \cdot \textmd{Id}_E),
\end{equation}
where $F_{H(t)}$ is the Chern curvature of $H(t)$, $\Lambda_\omega$ denotes the contraction of differential forms by $\omega$, and $\lambda$ is a constant. The long time existence and uniqueness of the solution to the above heat equation (\ref{Flow}) were proved by Donaldson (\cite{DON85}) for the K\"ahler manifold case, and by the fourth author (\cite{Z05}) for the general Hermitian manifold case.

When $\omega$ is K\"ahler, the long time behavior of the Hermitian-Yang-Mills flow (\ref{Flow}) is well-known: in the case of a stable bundle, the flow converges to a Hermitian-Einstein metric (\cite{DON85},\cite{DON87},\cite{SIM}); even in the general case, its convergence property at infinity is also clear due to the confirmation of the Atiyah-Bott-Bando-Siu conjecture (\cite{daskalopoulos1992topology},\cite{DW},\cite{jacob2016yang},\cite{Sib},\cite{LZZ2}). In this paper, we analyze the asymptotic behavior of the Hermitian-Yang-Mills flow (\ref{Flow}) on non-K\"ahler manifolds. Before stating our results, we would like to introduce some notations. Let $r=\rank E$. For any Hermitian metric $H$ on $E$, we denote the $r$ eigenvalues of the mean curvature $\sqrt{-1}\Lambda_{\omega} F_H$  by $\lambda_1(H,\omega)$, $\lambda_2(H,\omega)$, $\cdots$, $\lambda_r(H,\omega)$, sorted in the descending order.
Set
\begin{equation}\vec\lambda(H,\omega)=(\lambda_1(H,\omega),\lambda_2(H,\omega),\cdots,\lambda_r(H,\omega)),
\end{equation}
\begin{equation}
\lambda_L(H,\omega)=\lambda_r(H,\omega),\quad \lambda_U(H,\omega)=\lambda_1(H,\omega),
\end{equation}
\begin{equation}\hat{\lambda }_L(H,\omega)=\inf_M\ \lambda_L(H,\omega),\quad \hat{\lambda }_U(H,\omega)=\sup_M\ \lambda_U(H,\omega)
\end{equation}
and
\begin{equation}
\begin{split}
&\lambda_{mL}(H,\omega)=\frac{1}{\Vol(M, \omega)}\int_M\lambda_L(H,\omega)\frac{\omega^n}{n!},\\&\lambda_{mU}(H,\omega)=\frac{1}{\Vol(M, \omega)}\int_M\lambda_U(H,\omega)\frac{\omega^n}{n!}.
\end{split}
\end{equation}

We say a Hermitian metric $\omega $  is Gauduchon if it satisfies $\partial \bar{\partial} \omega^{n-1}=0$. Gauduchon (\cite{Gaud}) proved that for an arbitrary Hermitian metric $\hat{\omega }$ on a compact complex manifold, there exists a unique Gauduchon metric $\omega $, up to a positive constant, in the conformal class of $\hat{\omega }$. Our first result is the  monotonicity of $\hat{\lambda }_L(H(t),\omega)$ and $\hat{\lambda }_U(H(t),\omega)$ along the Hermitian-Yang-Mills flow.

\begin{thm}\label{thm2} Let $(M, \omega )$ be a compact Hermitian manifold, $(E, \bar{\partial }_{E})$ be a holomorphic vector bundle over $M$. Assume $H(t)$ is a smooth solution of the Hermitian-Yang-Mills flow (\ref{Flow}) on $(E, \bar{\partial }_{E})$. Then $\hat{\lambda }_L(H(t), \omega )$ is increasing and $\hat{\lambda }_U(H(t), \omega )$ is decreasing on $[0, \infty )$. If $\omega $ is Gauduchon, then $\lambda_{mL}(H(t), \omega )$ is increasing and $\lambda_{mU}(H(t), \omega )$ is decreasing. Furthermore, we have \begin{align}&\lim\limits_{t\rightarrow \infty}\lambda_{mL}(H(t), \omega )=\lim\limits_{t\rightarrow \infty}\hat{\lambda }_{L}(H(t), \omega ),\\ &\lim\limits_{t\rightarrow \infty}\lambda_{mU}(H(t), \omega )=\lim\limits_{t\rightarrow \infty}\hat{\lambda }_{U}(H(t), \omega ).\end{align}
\end{thm}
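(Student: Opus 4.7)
The plan is to reduce everything to a maximum-principle analysis for a scalar parabolic operator derived from the mean curvature $K := \sqrt{-1}\Lambda_\omega F_{H(t)}$. Differentiating along the flow via $H^{-1}\dot H = -2(K - \lambda\Id)$ and the standard formula $\dot F_H = \bar\partial\partial_H(H^{-1}\dot H)$, and then invoking the Hermitian Nakano-type commutator $\sqrt{-1}[\Lambda_\omega,\bar\partial] = \partial_H^* + \tau^*$ (with $\tau$ encoding the torsion of $\omega$) applied to the $\End(E)$-valued $(1,0)$-form $\partial_H K$, one obtains the linear, uniformly parabolic equation
\begin{equation*}
\partial_t K = -2\partial_H^*\partial_H K - 2\tau^*\partial_H K =: \mathcal{L}K,
\end{equation*}
whose first-order drift $-2\tau^*\partial_H$ vanishes in the K\"ahler case. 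The induced scalar operator on functions, $\mathcal{L}_\omega f := -2\sqrt{-1}\Lambda_\omega\bar\partial\partial f$, is likewise uniformly parabolic with smooth coefficients.

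Since $\lambda_U(A) = \max_{|v|=1}\langle Av,v\rangle$ is convex and $\lambda_L(A)$ is concave on Hermitian matrices, and $K$ solves the linear parabolic equation above, the Hamilton-type maximum principle --- implemented by parallel-transporting an eigenvector at a spatial extremum of $\lambda_{U,L}$ and invoking the convexity/concavity --- yields the pointwise viscosity inequalities $\partial_t\lambda_U \leq \mathcal{L}_\omega\lambda_U$ and $\partial_t\lambda_L \geq \mathcal{L}_\omega\lambda_L$ on $M\times[0,\infty)$. The scalar parabolic maximum principle on the compact manifold $M$ then forces $\hat\lambda_U(H(t),\omega)$ to be decreasing and $\hat\lambda_L(H(t),\omega)$ to be increasing. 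When $\omega$ is Gauduchon, two integrations by parts yield
\begin{equation*}
\int_M\mathcal{L}_\omega f\,\frac{\omega^n}{n!} \;=\; 2\int_M f\cdot\sqrt{-1}\,\partial\bar\partial\frac{\omega^{n-1}}{(n-1)!} \;=\; 0,
\end{equation*}
so integrating the same viscosity inequalities against $\omega^n/n!$ delivers the claimed monotonicity of $\lambda_{mU}$ and $\lambda_{mL}$.

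For the equality of limits I argue the $\lambda_L$ case; the $\lambda_U$ case is symmetric. Let $\alpha := \lim_{t\to\infty}\hat\lambda_L(H(t),\omega)$ and $\beta := \lim_{t\to\infty}\lambda_{mL}(H(t),\omega)$, both existing by the monotonicity just established and satisfying $\beta \geq \alpha$. For each fixed $s \geq 0$, define $w(x,t) := \lambda_L(H(t),\omega)(x) - \hat\lambda_L(H(s),\omega)$ on $M\times[s,\infty)$. Since $\hat\lambda_L$ is increasing in $t$, $w$ is nonnegative, and since $\mathcal{L}_\omega$ annihilates constants, $w$ is a viscosity supersolution of $\partial_t w = \mathcal{L}_\omega w$. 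Fix $\tau > 0$. Comparison with the (smooth, positive) heat kernel of $\partial_t - \mathcal{L}_\omega$ on the compact manifold $M$ after elapsed time $\tau$ produces a constant $c(\tau)>0$ independent of $s$ with
\begin{equation*}
c(\tau)\int_M w(\cdot,s)\,\frac{\omega^n}{n!} \;\leq\; \inf_{M} w(\cdot,s+\tau),
\end{equation*}
i.e.\ $c(\tau)\,\Vol(M,\omega)\bigl(\lambda_{mL}(H(s),\omega) - \hat\lambda_L(H(s),\omega)\bigr) \leq \hat\lambda_L(H(s+\tau),\omega) - \hat\lambda_L(H(s),\omega)$. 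Sending $s\to\infty$ collapses the right-hand side to zero by the Cauchy criterion for the convergent sequence $\hat\lambda_L(H(s),\omega)$, so $\beta = \alpha$.

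The main technical obstacle lies in the second paragraph: at points of eigenvalue multiplicity, $\lambda_U$ and $\lambda_L$ are only Lipschitz, so the differential inequalities for them must be interpreted in the viscosity sense. This is handled by the usual smoothing device --- e.g.\ replacing $\lambda_U$ by $\tfrac{1}{k}\log\tr(e^{kK})$ and passing $k\to\infty$, or by standard viscosity theory for suprema of affine functionals of a smooth parabolic solution. The heat-kernel comparison used in the third paragraph is then classical linear parabolic theory for the smooth, uniformly parabolic operator $\mathcal{L}_\omega$ on compact $M$.
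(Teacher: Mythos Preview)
Your proposal is correct and follows essentially the same strategy as the paper: show that $\lambda_L$ (resp.\ $\lambda_U$) is a viscosity supersolution (resp.\ subsolution) of the scalar operator $\partial_t-2\sqrt{-1}\Lambda_\omega\partial\bar\partial$, then invoke the maximum principle and, in the Gauduchon case, a parabolic Harnack--type comparison to obtain the mean--value monotonicity and the equality of limits. The paper packages the last two steps into a single Lemma (its Lemma~2.7), whose proof is exactly your heat--kernel comparison argument.

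The one place where your write--up differs in substance is the verification of the viscosity inequality. The paper does not appeal to convexity or to Hamilton's tensor principle; instead it builds, at each point $(p_0,t_0)$, an explicit smooth barrier $f_L=\langle \theta_H\tilde e_r,\tilde e_r\rangle_H$ by taking a holomorphic frame adapted to the eigenbasis at $(p_0,t_0)$, applying Gram--Schmidt with respect to $H(t)$, and checking directly that $(\partial_t-2\sqrt{-1}\Lambda_\omega\partial\bar\partial)f_L(p_0,t_0)=0$. This sidesteps any discussion of how the $t$--dependent connection $\partial_{H(t)}$ interacts with parallel transport. Your alternative via $\tfrac{1}{k}\log\tr(e^{kK})$ also works but requires a careful computation since $K$ and its derivatives do not commute. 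Finally, one technical caveat: you write that ``integrating the viscosity inequalities against $\omega^n/n!$'' yields the monotonicity of $\lambda_{mL},\lambda_{mU}$, but a viscosity inequality cannot be integrated directly. The paper handles this (and you should too) by comparing $u=\lambda_L$ with the smooth solution $v$ of the linear heat equation sharing its initial data at time $t_0$; the Gauduchon condition makes $\int_M v(\cdot,t)\,\omega^n/n!$ constant, and $u\geq v$ by comparison, whence $\mu_m(t)\geq\mu_m(t_0)$.
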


Our next result indicates the uniqueness of the limit of the mean curvature.
\begin{thm}\label{thmuni}
Suppose $H_{1}(t)$ and $H_{2}(t)$ are two smooth solutions of the Hermitian-Yang-Mills flow (\ref{Flow}). Then we have
\begin{equation}
\lim_{t\rightarrow \infty }\|\sqrt{-1}\Lambda_{\omega}(F_{H_{1}(t)}-F_{H_{2}(t)})\|_{L^{2}(H_{1}(t))}=0.
\end{equation}
\end{thm}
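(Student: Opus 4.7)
Let $h(t) = H_1(t)^{-1}H_2(t) \in \Gamma(\End E)$, a positive-definite section that is self-adjoint with respect to both metrics, and set $K_i(t) = \sqrt{-1}\Lambda_\omega F_{H_i(t)}$. Differentiating $H_2 = H_1 h$ in time via (\ref{Flow}) yields $\partial_t h = 2(K_1 h - h K_2)$, together with the standard Kodaira identity
$$
K_2 - K_1 = \sqrt{-1}\,\Lambda_\omega\,\bar\partial_E\bigl(h^{-1}\partial_{H_1} h\bigr).
$$
My plan is to treat Donaldson's pointwise distance $\sigma(t) := \tr h(t) + \tr h(t)^{-1} - 2r \geq 0$, which vanishes iff $H_1 = H_2$, as a Lyapunov quantity whose dissipation controls $\|K_1 - K_2\|_{L^2(H_1)}^2$.

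Combining the evolution of $h$ with the Kodaira identity and applying the Bochner--Kodaira formalism on the Hermitian manifold $(M,\omega)$, I expect a pointwise parabolic inequality of the form
$$
(\partial_t - \Delta_\omega)\sigma \;\leq\; -c\,|K_1 - K_2|^2_{H_1} + \mathcal{T},
$$
where $\Delta_\omega$ is the Chern Laplacian and $\mathcal{T}$ collects first-order torsion contributions from $d\omega$. A uniform $C^0$-bound on $h^{\pm 1}$, which feeds into the constant $c$, follows from the parabolic maximum principle applied to $\tr h$ and $\tr h^{-1}$ together with the $L^\infty$-bounds on each $K_i$ supplied by Theorem~\ref{thm2}. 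Integrating the inequality against $\omega^n/n!$ and absorbing $\mathcal T$ into the principal good term via Cauchy--Schwarz should yield
$$
\frac{d}{dt}\int_M \sigma\,\frac{\omega^n}{n!} \;\leq\; -\frac{c}{2}\,\|K_1(t)-K_2(t)\|^2_{L^2(H_1(t))}.
$$
Since $\int_M \sigma\,\omega^n/n!$ is nonnegative and uniformly bounded from above (by the maximum principle applied to $\sigma$ at the pointwise level), time integration gives
$$
\int_0^\infty \|K_1(t) - K_2(t)\|^2_{L^2(H_1(t))}\,dt \;<\; \infty.
$$

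To upgrade this $L^1_t$-integrability to the claimed limit, I would invoke the standard parabolic regularity theory for (\ref{Flow}) from \cite{Z05} to obtain a uniform Lipschitz bound
$$
\Bigl|\tfrac{d}{dt}\|K_1(t) - K_2(t)\|^2_{L^2(H_1(t))}\Bigr| \;\leq\; C
$$
on $[1,\infty)$ via uniform higher-order estimates on unit time intervals. Combined with the integrability above, this forces $\|K_1(t) - K_2(t)\|^2_{L^2(H_1(t))} \to 0$ as $t\to \infty$, as claimed. The main obstacle is the non-K\"ahler bookkeeping in the pointwise parabolic inequality: on a general Hermitian manifold the torsion correction $\mathcal{T}$ is not automatically benign, and closing the dissipation estimate after integration will likely require either cancellation against the non-Gauduchon error in $\int_M \Delta_\omega \sigma$, or a mild conformal reweighting of the Lyapunov functional $\sigma$ to cleanly kill the Laplacian term before one applies Cauchy--Schwarz.
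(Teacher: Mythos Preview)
Your overall strategy of using Donaldson's distance $\tr h + \tr h^{-1} - 2r$ as a Lyapunov functional matches the paper's starting point, but the dissipation term you claim is wrong. The exact computation (Lemma~\ref{estA} in the paper) gives
\[
\Bigl(\frac{\partial}{\partial t} - 2\sqrt{-1}\Lambda_\omega\partial\bar\partial\Bigr)\bigl(\tr(h+h^{-1})-2r\bigr) \;=\; -2\bigl(|h^{1/2} A|^2_{H_1,\omega} + |A\, h^{-1/2}|^2_{H_1,\omega}\bigr),
\]
where $A = h^{-1}\partial_{H_1}h$ is the \emph{connection} difference $D_{H_2}-D_{H_1}$. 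The dissipation thus controls the first-order quantity $|A|^2$, not $|K_1-K_2|^2$. Since $K_2-K_1 = \sqrt{-1}\Lambda_\omega\bar\partial A$ involves a derivative of $A$, there is no pointwise bound of the form $|K_1-K_2|^2 \lesssim |A|^2$, and your proposed inequality $(\partial_t-\Delta_\omega)\sigma \le -c\,|K_1-K_2|^2_{H_1} + \mathcal{T}$ cannot be extracted from the evolution of $h$; no amount of torsion bookkeeping will repair this.

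The paper therefore proceeds in two genuinely separate steps. First, the identity above (combined with an ODE argument using $\partial_t A = -2\partial_{H_2}K_2 + 2\partial_{H_1}K_1$) yields $\|A(t)\|_{L^2}\to 0$. Second, writing $K_2-K_1 = (\partial^{*_{H_1}}+\tau^*)A$ and integrating by parts gives
\[
\|K_1-K_2\|_{L^2(H_1)}^2 \;\lesssim\; \int_M |A|_{H_1,\omega}\bigl(1+|A|_{H_1,\omega}+|\partial_{H_1}K_1|_{H_1,\omega}+|\partial_{H_2}K_2|_{H_2,\omega}\bigr)\frac{\omega^n}{n!}.
\]
The right side tends to zero because $\|A\|_{L^2}\to 0$ and, crucially, $\|D_{H_i}K_i\|_{L^2}\to 0$ along each individual flow (Lemma~\ref{estK}). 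This last ingredient---decay of the covariant derivative of the mean curvature along a single solution---is the essential input your proposal misses entirely; without it there is no route from decay of $A$ to decay of its divergence $K_2-K_1$. Your appeal to ``uniform higher-order estimates'' from \cite{Z05} for a Lipschitz bound on $\frac{d}{dt}\|K_1-K_2\|_{L^2}^2$ is also unjustified: along the flow one has only the decreasing $C^0$ bound on $K_i$, not uniform-in-time higher derivative bounds, so the upgrade from $L^1_t$-integrability to a limit must go through the explicit structure above rather than generic regularity.
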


To see the geometric substance of the limit of the eigenvalues, we need to recall the Harder-Narasimhan type.
Let $(E, \bar{\partial }_{E})$ be an $r$-rank holomorphic vector bundle over a compact Gauduchon manifold $(M, \omega)$. It possesses a unique Harder–Narasimhan filtration $\{\mathcal{E}_{\alpha}\}_{\alpha=0}^l$ (here $\mathcal{E}_0= 0$ and $\mathcal{E}_l= E$), whose successive quotients $\mathcal{Q}_{\alpha }=\mathcal{E}_{\alpha}/\mathcal{E}_{\alpha-1} (1\leq \alpha \leq l)$ are torsion free and semistable (\cite{Bru}, \cite{Kobayashi}). From this filtration, one can construct a nonincreasing $r$-tuple of real numbers $\vec{\mu}_\omega(E )=(\mu_{1, \omega} , \cdots , \mu_{r, \omega})$, called the Harder-Narasimhan type of $(E, \bar{\partial }_{E})$, where $\mu_{i, \omega}= \mu_{\omega}(\mathcal{Q}_{\alpha})$ (i.e. the $\omega$-slope of the quotient $\mathcal{Q}_{\alpha}$) for $\rank(\mathcal{E}_{\alpha-1})+ 1 \leq i \leq \rank(\mathcal{E}_{\alpha})$. Equip $E$ with a Hermitian metric $K$. We have the associated orthogonal projections $\pi_{\alpha }^{K}: E\rightarrow E$ onto the subsheaves $\mathcal{E}_{\alpha }$ with respect to $K$. It is well-known that every
$\pi_{\alpha }^{K}$ is an $L_{1}^{2}$-bounded  Hermitian endomorphism. Define
\begin{equation}
\Phi_{\omega}^{HN} (E, K)=\Sigma_{\alpha=1}^{l}\mu_{\omega } (\mathcal{Q}_{\alpha })(\pi_{\alpha }^{K}-\pi_{\alpha-1}^{K}).
\end{equation}
This is an $L_{1}^{2}$-bounded Hermitian endomorphism, called the Harder-Narasimhan projection of  $(E, \bar{\partial}_{E})$.

In the Gauduchon case, the authors (\cite{LiZZ}) prove the existence of $L^{p}$-approximate critical Hermitian structure by the continuity method. Consider the following perturbed Hermitian-Einstein equation:
\begin{equation} \label{eq}
\sqrt{-1}\Lambda_{\omega } F_{H}-\lambda \cdot \textmd{Id}_E+\varepsilon \log (K^{-1}H)=0,
\end{equation}
where $K$ is any fixed background metric on $E$, and suppose $H_{\varepsilon}$ is a solution to (\ref{eq}).
\begin{thm}[\cite{LiZZ}] \label{theorem1}
Let $(M, \omega)$ be a compact Gauduchon manifold and $(E, \bar{\partial }_{E})$ a holomorphic vector bundle. Then there exists a sequence $\varepsilon_{i}\rightarrow 0$ such that
 \begin{equation}\label{eqthm1}
 \lim_{i\rightarrow \infty }\left\| \sqrt{-1}\Lambda_{\omega }F_{H_{\varepsilon_{i}}}-\frac{2\pi }{\Vol(M, \omega )}\Phi_{\omega }^{HN}(E, K)\right\|_{L^{p}(K)}=0
 \end{equation}
 for any $0<p<+\infty $.
\end{thm}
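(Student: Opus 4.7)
My plan is to solve (\ref{eq}) for each $\varepsilon > 0$ by a continuity method, extract $\varepsilon$-uniform $L^p$ bounds on $\sqrt{-1}\Lambda_\omega F_{H_\varepsilon}$, take a weak subsequential limit $\Psi_\infty$ of $\varepsilon \log(K^{-1}H_\varepsilon)$, and identify $\lambda \Id_E - \Psi_\infty$ with $\tfrac{2\pi}{\Vol(M,\omega)}\Phi_\omega^{HN}(E,K)$. For existence, fix $\varepsilon > 0$ and consider the path
\begin{equation*}
\sqrt{-1}\Lambda_\omega F_H - \lambda \Id_E + \varepsilon \log(K^{-1}H) = (1-t)(\sqrt{-1}\Lambda_\omega F_K - \lambda \Id_E), \qquad t\in[0,1],
\end{equation*}
which is solved by $H = K$ at $t = 0$ and reduces to (\ref{eq}) at $t=1$. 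Openness comes from the implicit function theorem: the linearization on $K$-self-adjoint endomorphisms is the $\bp$-Laplacian $\Delta_H$ plus the zeroth-order operator $\varepsilon\cdot\Id$, which is invertible whenever $\varepsilon > 0$. Closedness reduces to an ($\varepsilon$-dependent, $t$-uniform) $C^0$ bound on $s_\varepsilon := \log(K^{-1}H_\varepsilon)$, obtained from the maximum principle applied to $|s_\varepsilon|_K^2$ using the coercive term $\varepsilon s_\varepsilon$; higher regularity follows from standard elliptic bootstrap.

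\textbf{Uniform estimates and weak limit.} The crucial $\varepsilon$-uniform bound is $\|\varepsilon s_\varepsilon\|_{L^p(K)} \le C_p$ for every $p < \infty$, equivalently $\|\sqrt{-1}\Lambda_\omega F_{H_\varepsilon} - \lambda\Id_E\|_{L^p(K)} \le C_p$. It is extracted by testing (\ref{eq}) against endomorphisms built from the Harder--Narasimhan filtration $\{\mathcal{E}_\alpha\}$ of $E$: the Chern--Weil slope formulas on the Gauduchon manifold $(M,\omega)$ --- which require only $\pbp \omega^{n-1} = 0$ --- control the integrals $\int_M \tr(\pi_\alpha^{H_\varepsilon}\sqrt{-1}\Lambda_\omega F_{H_\varepsilon})\tfrac{\omega^n}{n!}$, and combined with a Uhlenbeck--Yau-type Bochner identity for $|\sqrt{-1}\Lambda_\omega F_{H_\varepsilon}|^2$ yield the $L^p$ bounds by standard iteration. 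With $\varepsilon s_\varepsilon$ uniformly bounded in $L^p$ and $\bp_E s_\varepsilon$ controlled in $L^2$, weak compactness produces a sequence $\varepsilon_i\to 0$ and an $L_1^2$ Hermitian endomorphism $\Psi_\infty$ with $\varepsilon_i s_{\varepsilon_i} \rightharpoonup \Psi_\infty$ in $L_1^2$ and strongly in $L^p$ for all $p < \infty$; the equation then gives $\sqrt{-1}\Lambda_\omega F_{H_{\varepsilon_i}} \to \lambda\Id_E - \Psi_\infty$ in $L^p(K)$.

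\textbf{Identification with the Harder--Narasimhan projection.} The hardest step, and the main obstacle, is showing $\lambda\Id_E - \Psi_\infty = \tfrac{2\pi}{\Vol(M,\omega)}\Phi_\omega^{HN}(E,K)$. Adapting the Daskalopoulos--Wentworth--Sibley strategy to the Gauduchon setting, I would argue that the spectral projections of $\Psi_\infty$ onto eigenspaces above each threshold define $L_1^2$ weakly holomorphic sub-bundles; the Gauduchon version of the Uhlenbeck--Yau regularity theorem (due to Popovici) then promotes them to saturated coherent subsheaves $\mathcal{F}_\alpha \subset E$. Testing (\ref{eq}) against the Hermitian projections $\pi^{H_\varepsilon}_{\mathcal{F}_\alpha}$ and letting $\varepsilon\to 0$ yields slope inequalities forcing $\{\mathcal{F}_\alpha\}$ to coincide with $\{\mathcal{E}_\alpha\}$ by uniqueness of the Harder--Narasimhan filtration, and pinning the eigenvalues of $\lambda\Id_E - \Psi_\infty$ to $\tfrac{2\pi}{\Vol(M,\omega)}\mu_\omega(\mathcal{Q}_\alpha)$ with the correct multiplicities --- precisely the spectral data of $\tfrac{2\pi}{\Vol(M,\omega)}\Phi_\omega^{HN}(E,K)$, establishing (\ref{eqthm1}). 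The non-K\"ahler nature of $\omega$ permeates every step through torsion terms in integration by parts, controllable only via $\pbp\omega^{n-1}=0$ rather than $d\omega = 0$; keeping these terms uniformly small as $\varepsilon\to 0$ is the most delicate aspect of the proof.
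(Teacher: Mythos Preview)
This theorem is not proved in the present paper: it is quoted verbatim from \cite{LiZZ} and used as a black box (the only place it is invoked is the one-line proof of Lemma~\ref{lem1'}, ``Apply Theorem~\ref{theorem1} and Theorem~\ref{thm2'}''). So there is no ``paper's own proof'' to compare your proposal against.

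As a sketch of what the argument in \cite{LiZZ} presumably looks like, your outline is broadly on the right track --- continuity method for existence of $H_\varepsilon$, uniform control on $\varepsilon s_\varepsilon$, weak subsequential limit, then identification of the limit with the HN projection via Uhlenbeck--Yau regularity of $L_1^2$ weakly holomorphic subbundles in the Gauduchon setting. One correction worth noting: the $\varepsilon$-uniform bound you want is actually simpler than you describe. From the equation $\sqrt{-1}\Lambda_\omega F_{H_\varepsilon}=\lambda\Id_E-\varepsilon s_\varepsilon$ and the standard differential inequality for $\log\tr(e^{s_\varepsilon})$ (or for $|s_\varepsilon|_K$) one gets directly via the maximum principle that $\sup_M\varepsilon|s_\varepsilon|_K\le\sup_M|\sqrt{-1}\Lambda_\omega F_K-\lambda\Id_E|_K$, hence a uniform $L^\infty$ bound on $\sqrt{-1}\Lambda_\omega F_{H_\varepsilon}$; no Chern--Weil testing or Bochner iteration is needed at this stage. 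The Chern--Weil formula and the HN filtration enter only in the identification step, where the eigenvalues of the limit are matched to the slopes $\mu_\omega(\mathcal{Q}_\alpha)$.
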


Then by  Theorem \ref{thm2}, Theorem \ref{thmuni} and Theorem \ref{theorem1}, we derive the limiting property of the eigenvalue functions of mean curvature along the Hermitian-Yang-Mills flow.

\begin{thm} \label{thm3}
Let $(M, \omega )$ be a compact Gauduchon manifold, $(E, \bar{\partial }_{E})$ be a holomorphic vector bundle of rank $r$ over $M$. Assume $H(t)$ is a smooth solution of the Hermitian-Yang-Mills flow (\ref{Flow}) on $(E, \bar{\partial }_{E})$. Then
\begin{equation}\lim_{t\rightarrow \infty }\left\|\vec\lambda(H(t), \omega )-\frac{2\pi }{\Vol(M, \omega)}\vec\mu_{\omega }(E)\right\|_{L^{2}}=0. \end{equation}
Specially, we have \begin{align}\label{Flow000}&\lim\limits_{t\rightarrow \infty}\lambda_{mL}(H(t), \omega )=\lim\limits_{t\rightarrow \infty}\hat{\lambda }_{L}(H(t), \omega )=\frac{2\pi }{\Vol(M, \omega )}\mu_{r, \omega },\\ &\lim\limits_{t\rightarrow \infty}\lambda_{mU}(H(t), \omega )=\lim\limits_{t\rightarrow \infty}\hat{\lambda }_{U}(H(t), \omega )=\frac{2\pi }{\Vol(M, \omega )}\mu_{1, \omega }.\end{align}
\end{thm}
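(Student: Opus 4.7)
The plan is to combine Theorems \ref{thm2}, \ref{thmuni}, and \ref{theorem1} through the Wielandt--Hoffman inequality and a majorization argument. The Wielandt--Hoffman inequality $\sum_j(\lambda_j(A)-\lambda_j(B))^2\le\tr((A-B)^2)$ for Hermitian endomorphisms on a fiber, when integrated over $M$, converts $L^2$-closeness of mean curvatures into $L^2$-closeness of their sorted eigenvalue vectors. Since the Harder--Narasimhan projection $\Phi_\omega^{HN}(E,K)$ has pointwise sorted eigenvalue vector equal almost everywhere to $\vec\mu_\omega(E)$---each slope $\mu_\omega(\mathcal{Q}_\alpha)$ appearing with multiplicity $\rank\mathcal{Q}_\alpha$---Theorem \ref{theorem1} then upgrades to $\|\vec\lambda(H_{\varepsilon_i},\omega)-\tfrac{2\pi}{\Vol(M,\omega)}\vec\mu_\omega(E)\|_{L^p}\to 0$ for every finite $p$.

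The core step is to pin down the limit of $I(t):=\int_M\sum_j\lambda_j^2(H(t),\omega)\,\omega^n/n!$ as $t\to\infty$ from both sides. For the upper bound, let $H_i(t)$ denote the HYM flow issuing from $H_{\varepsilon_i}$. In the Gauduchon setting the Yang--Mills energy $\int_M|\sqrt{-1}\Lambda_\omega F_H-\lambda I|^2\omega^n/n!$---which equals $I(t)$ up to the conserved topological constant $4\pi\lambda\deg E-r\lambda^2\Vol(M,\omega)$---is nonincreasing along the flow. Combining this monotonicity with Theorem \ref{thmuni} (using the standard $L^\infty$ bound on $\sqrt{-1}\Lambda_\omega F_{H(t)}$ from the maximum principle to pass from $L^2$-closeness of mean curvatures to closeness of their squared-$L^2$ norms) and with Theorem \ref{theorem1}, I obtain $\limsup_{t\to\infty}I(t)\le\tfrac{4\pi^2}{\Vol(M,\omega)}\sum_j\mu_{j,\omega}^2$.

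For the matching lower bound, the Chern--Weil formula for the degree of a saturated subsheaf combined with Ky Fan's inequality $\tr(\pi A)\le\sum_{j=1}^{\rank\pi}\lambda_j(A)$ yields $2\pi\sum_{j=1}^{\rank\mathcal{E}_\alpha}\mu_{j,\omega}=2\pi\deg\mathcal{E}_\alpha\le\int_M\sum_{j=1}^{\rank\mathcal{E}_\alpha}\lambda_j(H,\omega)\,\omega^n/n!$ for every $\alpha$, and this extends to all $1\le k\le r$ because $k\mapsto\sum_{j=1}^k\mu_{j,\omega}$ is piecewise linear between the ranks $\rank\mathcal{E}_\alpha$ (the $\mu_j$ being constant on each semistable block) while $k\mapsto\int_M\sum_{j=1}^k\lambda_j$ is concave. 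Together with the conserved identity $\sum_j\int_M\lambda_j(H,\omega)\omega^n/n!=2\pi\deg E=2\pi\sum_j\mu_{j,\omega}$, this shows that the averaged eigenvalue vector $\vec\lambda_m(H,\omega):=(\lambda_{m1},\dots,\lambda_{mr})(H,\omega)$ majorizes $\tfrac{2\pi}{\Vol(M,\omega)}\vec\mu_\omega(E)$. Schur convexity of $\sum x_j^2$ and the pointwise Cauchy--Schwarz $\int_M\lambda_j^2\ge\Vol(M,\omega)\lambda_{mj}^2$ then give the reverse bound $I(t)\ge\tfrac{4\pi^2}{\Vol(M,\omega)}\sum_j\mu_{j,\omega}^2$.

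Matching the two bounds in the limit forces simultaneous equality in the Cauchy--Schwarz step (so each $\lambda_j(H(t),\omega)$ becomes spatially constant in $L^2$) and in the Schur--convex majorization (so $\lambda_{mj}(H(t),\omega)\to\tfrac{2\pi}{\Vol(M,\omega)}\mu_{j,\omega}$ for every $j$); together these yield the asserted $L^2$ convergence $\vec\lambda(H(t),\omega)\to\tfrac{2\pi}{\Vol(M,\omega)}\vec\mu_\omega(E)$, and the ``specially'' equalities drop out from the $L^2$ convergence of $\lambda_1=\lambda_U$ and $\lambda_r=\lambda_L$ combined with Theorem \ref{thm2}. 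The most delicate point will be the extension of the Chern--Weil subsheaf bound from the ranks $\rank\mathcal{E}_\alpha$ to all intermediate $k$ via the piecewise-linearity/concavity interpolation, together with the careful Gauduchon-case verification that the Yang--Mills energy is nonincreasing along the flow; one then has to ensure, in the equality-case analysis, that the block structure of $\vec\mu_\omega(E)$ combined with the vanishing-variance conclusion from Cauchy--Schwarz actually pins down each individual $\lambda_j$ in the limit rather than merely the block sums.
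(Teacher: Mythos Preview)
Your approach is correct and takes a genuinely different route from the paper's. The paper proves (via a local smooth-barrier construction, Lemma~\ref{klm1'}) that every partial sum $\lambda_{U,k}=\sum_{i\le k}\lambda_i$ is a viscosity subsolution of the heat operator, and then the parabolic Harnack argument of Lemma~\ref{ss} shows each $\lambda_{U,k}(H(t),\omega)$ converges in $L^p$ to a constant (Corollary~\ref{lpcvgEK}); these constants are squeezed between the Chern--Weil bound (Lemma~\ref{eigest1}, which is exactly your majorization inequality, proved there by induction rather than by your piecewise-linear/concave interpolation) on one side and, on the other, the monotonicity of $\lambda_{mU,k}$ along the flow started at $H_{\varepsilon_i}$ combined with Theorems~\ref{thmuni} and \ref{theorem1} (Lemmas~\ref{lem1'} and \ref{lem3'}). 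You instead squeeze only the single scalar $I(t)=\int\sum_j\lambda_j^2$ between the decay of the Hermitian--Yang--Mills energy and the majorization/Schur/Jensen lower bound, and then recover each individual $\lambda_j$ from the equality cases (vanishing variance from Jensen, and $\vec\lambda_m\to\tfrac{2\pi}{\Vol}\vec\mu$ from strict Schur convexity of $\sum x_j^2$, via a subsequential-limit argument using boundedness of $\vec\lambda_m$). Your route is more economical in that it avoids the barrier construction for partial eigenvalue sums; the paper's route, on the other hand, delivers the stronger intermediate Theorem~\ref{thm2'} (monotonicity of $\hat\lambda_{U,k}$ and $\hat\lambda_{L,k}$ for \emph{every} $k$), which it reuses in Section~4. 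One technical caveat: your Wielandt--Hoffman step implicitly treats $\theta_{H_1}$ and $\theta_{H_2}$ as Hermitian for the same fiber metric, which they are not; to fix this either use the Bhatia bound with condition number as the paper does in (\ref{estEK}), controlling $\mathrm{cond}(\sigma)$ via Lemma~\ref{estA}, or---since your core argument only ever compares $\int\tr(\theta^2)$---bypass the eigenvalue inequality and work directly with traces of squares, which are metric-independent.
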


\medskip

As a corollary to the above theorem, the existence theorem for special metrics on holomorphic vector bundles follows.

\begin{thm}\label{cor1}
Let $(M, \omega )$ be a compact Gauduchon manifold, $(E, \bar{\partial }_{E})$ be a holomorphic vector bundle over $M$.
Then for any $\delta>0$, there exists a Hermitian metric $H_{\delta}$ on $E$ such that
\begin{equation}\label{in01}(\frac{2\pi }{\Vol(M, \omega )}\mu_{L}(E, \omega )-\delta) \Id_{E}\leq \sqrt{-1}\Lambda_{\omega}F_{H_{\delta}}\leq (\frac{2\pi }{\Vol(M, \omega )}\mu_{U}(E, \omega )+\delta)\Id_{E}.
\end{equation}
\end{thm}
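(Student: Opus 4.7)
The plan is to obtain $H_\delta$ as the time-$t_\delta$ snapshot of the Hermitian-Yang-Mills flow, for $t_\delta$ sufficiently large. The entire content of the corollary is packaged into Theorem \ref{thm3}; no new estimates are needed.

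First, fix any smooth background Hermitian metric $H_{0}$ on $E$ and let $H(t)$ be the long-time solution of the flow (\ref{Flow}) with initial data $H_{0}$, which exists on $[0,\infty)$ by the result of \cite{Z05}. Directly from the definitions of $\lambda_{L}$, $\lambda_{U}$, $\hat{\lambda}_{L}$, $\hat{\lambda}_{U}$, at every point $x\in M$ the Hermitian endomorphism $\sqrt{-1}\Lambda_{\omega}F_{H(t)}(x)$ satisfies the pointwise matrix inequality
\begin{equation}
\hat{\lambda}_{L}(H(t),\omega)\,\Id_{E}\leq \sqrt{-1}\Lambda_{\omega}F_{H(t)}(x)\leq \hat{\lambda}_{U}(H(t),\omega)\,\Id_{E},
\end{equation}
since the scalar infimum (resp. supremum) of the ordered eigenvalues controls the full Hermitian endomorphism at each point.

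Next, I invoke Theorem \ref{thm3}. Since $(M,\omega)$ is Gauduchon,
\begin{equation}
\lim_{t\to\infty}\hat{\lambda}_{L}(H(t),\omega)=\frac{2\pi}{\Vol(M,\omega)}\mu_{L}(E,\omega),\qquad \lim_{t\to\infty}\hat{\lambda}_{U}(H(t),\omega)=\frac{2\pi}{\Vol(M,\omega)}\mu_{U}(E,\omega),
\end{equation}
where $\mu_{L}(E,\omega)=\mu_{r,\omega}$ and $\mu_{U}(E,\omega)=\mu_{1,\omega}$. Given any $\delta>0$, I pick $t_{\delta}>0$ large enough so that both
\begin{equation}
\hat{\lambda}_{L}(H(t_{\delta}),\omega)\geq \frac{2\pi}{\Vol(M,\omega)}\mu_{L}(E,\omega)-\delta,\qquad \hat{\lambda}_{U}(H(t_{\delta}),\omega)\leq \frac{2\pi}{\Vol(M,\omega)}\mu_{U}(E,\omega)+\delta
\end{equation}
hold; by the monotonicity part of Theorem \ref{thm2} such a $t_{\delta}$ can always be found. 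Setting $H_{\delta}:=H(t_{\delta})$ and chaining the displayed pointwise bound with this choice delivers exactly (\ref{in01}).

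There is essentially no obstacle here: the corollary is a clean consequence of Theorem \ref{thm3}, whose own proof supplies the real content by combining the monotonicity from Theorem \ref{thm2}, the uniqueness in Theorem \ref{thmuni}, and the $L^{p}$-approximate critical Hermitian structures of Theorem \ref{theorem1}. The only point worth emphasizing is the elementary passage from scalar bounds on $\hat{\lambda}_{L/U}$ to pointwise Hermitian matrix inequalities for $\sqrt{-1}\Lambda_{\omega}F_{H_{\delta}}$, which is immediate from the definitions.
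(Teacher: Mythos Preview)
Your proof is correct and matches the paper's intended approach: the paper presents this result explicitly as a corollary of Theorem \ref{thm3} without spelling out the details, and your argument --- running the flow from an arbitrary initial metric, invoking the convergence of $\hat\lambda_L$ and $\hat\lambda_U$ from (\ref{Flow000}), and taking a late-time snapshot --- is precisely the derivation the paper has in mind.
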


\begin{rem}\label{rem01}
If $(E, \bar{\partial }_{E})$ is $\omega$-semistable, then $\mu_{U}(E, \omega )=\mu_{L}(E, \omega )=\mu_{\omega}(E)$. So Theorem \ref{cor1} implies the existence of approximate  Hermitian-Einstein structure on semistable holomorphic bundles over compact Gauduchon manifolds, which was proved in \cite{NZ}.
\end{rem}

Using Theorem \ref{thm2} and Theorem \ref{thm3}, we can prove that

\begin{thm}\label{ample}
Let $(E, \bar{\partial}_{E})$ be an ample holomorphic vector bundle  of rank $2$ over a compact complex surface $M$. Then there must exist a Hermitian metric $H$ such that its second Chern form is positive, i.e. $c_{2}(E, H)>0$.
\end{thm}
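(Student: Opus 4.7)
The plan is to run the Hermitian-Yang-Mills flow \eqref{Flow} with respect to a carefully chosen K\"ahler form $\omega$ and combine Theorems \ref{thm2}, \ref{thm3} with a pointwise Bogomolov-Gieseker identity. Since $\det E$ is an ample line bundle, by Kodaira embedding $M$ is projective, hence K\"ahler; choose $\omega$ K\"ahler with $[\omega]=2\pi c_1(\det E)$ in $H^{1,1}(M,\mathbb{R})$. Any K\"ahler metric on a surface is Gauduchon, so Theorems \ref{thm2} and \ref{thm3} apply. Hartshorne's characterization of ample vector bundles shows that every nonzero coherent quotient of $E$ is ample, hence has positive degree with respect to $\omega$, so both slopes $\mu_{1,\omega}(E)\geq\mu_{2,\omega}(E)>0$ of the Harder-Narasimhan type are strictly positive.

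Let $H(t)$ be the solution of \eqref{Flow} from any smooth initial Hermitian metric. By Theorems \ref{thm2} and \ref{thm3}, $\hat\lambda_L(H(t),\omega)$ monotonically increases to $\frac{2\pi\mu_{2,\omega}(E)}{\Vol(M,\omega)}>0$, so $\sqrt{-1}\Lambda_\omega F_{H(t)}$ is pointwise positive definite for $t\gg 0$. Moreover, taking the trace of \eqref{Flow} shows that $\log\det H(t)$ satisfies a linear inhomogeneous heat equation on the line bundle $\det E$; its solution converges smoothly to the unique Hermitian-Einstein metric $h_\infty$ on $\det E$. By uniqueness of the harmonic representative in a K\"ahler class, together with our normalization, $\sqrt{-1}F_{h_\infty}=\omega$, so $c_1(E,H(t))\to \omega/(2\pi)$ uniformly in $C^\infty$. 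In particular $c_1(E,H(t))^2>0$ pointwise for $t\gg 0$.

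The conclusion $c_2>0$ would then follow from the pointwise identity on a K\"ahler surface,
\[
4\,c_2(E,H)-c_1(E,H)^2 = \frac{1}{4\pi^2}\bigl(|\hat F_H^{\,0}|^2-\tfrac12\,|\Lambda_\omega \hat F_H|^2\bigr)\,\omega^2,
\]
where $\hat F_H=F_H-\frac{1}{2}(\tr F_H)\Id_E$ is the traceless curvature and $\hat F_H^{\,0}$ its $\omega$-primitive component. When $E$ is $\omega$-semistable, the upper and lower eigenvalue limits in Theorem \ref{thm3} coincide, so $\hat\lambda_L(H(t))$ and $\hat\lambda_U(H(t))$ squeeze to a common value; this forces $|\Lambda_\omega\hat F_{H(t)}|\to 0$ uniformly on $M$. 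The right-hand side of the identity is then asymptotically non-negative, and combined with $c_1^2>0$ this yields $c_2(E,H(t))>0$ at every point of $M$ for $t\gg 0$.

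The main obstacle is the non-semistable case: when $\mu_{1,\omega}(E)>\mu_{2,\omega}(E)$, the eigenvalue limits are distinct, so $|\Lambda_\omega\hat F_{H(t)}|$ stays bounded away from zero and the identity above alone does not yield pointwise positivity of $c_2$. The resolution is to analyze the flow's approach to the Harder-Narasimhan graded metric on $\mathcal{Q}_1\oplus\mathcal{Q}_2$: using that $\mathcal{Q}_2=E/\mathcal{Q}_1$ is an ample line bundle and that $[\omega]=2\pi c_1(\det E)$ pins down the limiting first Chern form of each factor, one must show that the positive contribution from $c_1(\mathcal{Q}_2)$ dominates any negative pointwise contribution from the possibly non-ample sub-line-bundle $\mathcal{Q}_1$; alternatively, whenever possible one can perturb $\omega$ slightly within the K\"ahler cone to render $E$ $\omega$-semistable and reduce to the previous case.
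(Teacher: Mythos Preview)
Your argument has a genuine gap in the non-semistable case, and the two fixes you propose do not work. Perturbing $\omega$ within the K\"ahler cone will not in general make $E$ semistable: the Harder--Narasimhan filtration is locally constant in $\omega$ and the slope gap $\mu_{1,\omega}-\mu_{2,\omega}$ varies continuously, so if it is positive for one $\omega$ it stays positive nearby. The alternative of ``analyzing the flow's approach to the graded metric'' is too vague to yield a \emph{pointwise} lower bound on $c_2$; the convergence results available (Theorem~\ref{thm3}) are only $L^2$ for the eigenvalue vector, not for the full curvature tensor, and give no control on the primitive part $|\hat F_H^{\,0}|^2$.

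The missing idea, which the paper exploits, is purely algebraic and removes the semistable/non-semistable dichotomy altogether. Instead of fixing $[\omega]=2\pi c_1(\det E)$ cohomologically and then arguing that $c_1(E,H(t))\to\omega/(2\pi)$, choose $H_0$ first so that $\sqrt{-1}\tr F_{H_0}>0$ (possible since $\det E$ is ample) and set $\omega=\sqrt{-1}\tr F_{H_0}$. Then $\tr(\sqrt{-1}\Lambda_\omega F_{H_0}-\Id_E)=0$, and by Proposition~\ref{Flow002} the flow preserves $\sqrt{-1}\tr F_{H(t)}=\omega$, hence $\lambda_1(t)+\lambda_2(t)\equiv 2$ \emph{exactly} for all $t$. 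Feeding this into your own Bogomolov identity (with $c_1^2=\omega^2/(4\pi^2)$ and $|\Lambda_\omega\hat F_H|^2=2(\lambda_1-1)^2$) gives
\[
c_2(E,H(t))\;\ge\;\frac{1}{16\pi^2}\bigl(1-(\lambda_1-1)^2\bigr)\omega^2\;=\;\frac{\lambda_1(t)\lambda_2(t)}{16\pi^2}\,\omega^2,
\]
which is strictly positive once both eigenvalues are positive. By Theorem~\ref{thm3} (and $\mu_L(E,\omega)>0$, which follows from ampleness via \cite[Corollary 4.6]{LiZZ}) one has $\hat\lambda_L(H(t),\omega)>0$ for $t\gg 0$, and the proof is complete without any case distinction.
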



Given a holomorphic vector bundle $(E, \overline{\partial}_E)$ with a Hermitian metric $H_{0}$ over a compact Hermtian manifold $(M, \omega )$,  we write $\mathcal{A}_{H_0}$ for the space of connections compatible with $H_{0}$, and $\mathcal{A}_{H_0}^{1,1}$ for the space of $H_0$-unitary integrable connections. The Hermitian-Yang-Mills flow (\ref{Flow}) with initial date $H_{0}$ is gauge equivalent to the following modified Yang-Mills flow with initial data ${A}_{0}^{}\in \mathcal{A}_{H_0}^{1,1}$:
     \begin{equation}
          \left\{\begin{aligned}\label{modified flow}
                                \frac{\partial A(t)}{\partial t} &=-\sqrt{-1}(\partial_{A(t)}-\overline{\partial}_{A(t)})\Lambda_{\omega}F_{A(t)},\\
                                A(0)&=A_0.\\
                 \end{aligned}
          \right.
    \end{equation}
Indeed, $A(t)=\sigma(t)(A_0)$, where $\sigma(t)$ is a complex gauge transformation satisfying $\sigma^{\ast_{H_0}}(t)\sigma(t)=H_0^{-1}H(t)$ and $H(t)$ is the smooth solution of the Hermitian-Yang-Mills flow (\ref{Flow}) with $H(0)=H_0$.

 When $(M, \omega )$ is a K\"ahler manifold, the above flow (\ref{modified flow}) is just the Yang-Mills flow which, as the gradient flow of the Yang-Mills functional, was introduced by  Atiyah-Bott in \cite{atiyah1983yang}. By Uhlenbeck compactness (\cite{uhlenbeck1982connections,uhlenbeck1986priori}), there exists a subsequence $t_i\to +\infty$ such that the sequence $A(t_{i})$ converges (modulo unitary gauge transformation) weakly in $L_1^2$-topology to a limiting connection $ A_{\infty}$ on a limiting bundle $(E_{\infty },H_{\infty})$ outside the  analytic bubbling set $\Sigma_{an}$. We call such a limit $A_{\infty}$ an Uhlenbeck limit. By the results in \cite{hong2004asymptotical}, one can show that the above convergence can be strengthened to be in $C_{loc}^{\infty}$-topology. Since $\sqrt{-1}\Lambda_{\omega}F_{A_{\infty}}$ is parallel, we have a holomorphic orthogonal decomposition on $M\setminus \Sigma_{an}$
    \begin{equation}
             (E_{\infty}, A_{\infty }, H_{\infty}) =\bigoplus_{i=1}^{l}(E_{\infty}^i, A_{\infty , i }, H_{\infty , i}),
    \end{equation} where every $H_{\infty , i}$ is a Hermitian-Einstein metric on the holomorphic vector bundle $E_{\infty}^i$. Moreover, $(E_{\infty}, \bar{\partial }_{A_{\infty }})$ extends over the whole $M$ as a reflexive sheaf. On the other hand, there is a double filtration called the Harder-Narasimhan-Seshadri filtration (abbr. HNS-filtration), whose successive quotients are actually stable not just semi-stable. Denote by $Gr_\omega^{HNS}(E,\overline{\partial}_E)$ the associated graded sheaf of the Harder-Narasimhan-Seshadri filtration of $(E,\overline{\partial}_E)$ with respect to $\omega$.
It is an interesting question whether $ (E_{\infty}, \bar{\partial }_{A_{\infty }})$ is isomorphic to $Gr_\omega^{HNS}(E,\overline{\partial}_E)^{\ast \ast}$ or not. This question was first raised by Atiyah-Bott (\cite{atiyah1983yang}) on Riemann surfaces, and has been proved by Daskalopoulos (\cite{daskalopoulos1992topology}). Bando and Siu (\cite{BS}) conjectured that it still holds for reflexive sheaf $\mathcal{E}$ over higher dimensional K\"ahler manifold. When the sheaf $\mathcal{E}$ is locally free, this question was confirmed by Daskalopoulos and Wentworth (\cite{DW}) for K\"ahler surfaces; by Jacob (\cite{jacob2016yang}) and Sibley (\cite{Sib}) in the higher dimensional K\"ahler case. The reflexive sheaf case was confirmed by Li, the third author and the fourth author (\cite{LZZ2}). Naturally, do these results hold in the non-K\"ahler case?

 In \cite{NZ2}, Nie and the fourth author studied the limiting behavior of the  modified Yang-Mills flow (\ref{modified flow}) under the assumption that the Hermitian metric $\omega $  is Gauduchon and astheno-K\"ahler. They proved:

  \begin{thm}[\cite{NZ2}]\label{thm1.1}
     Let $(E,{H}_{0}^{})$ be a Hermitian vector bundle over a compact Gauduchon and astheno-K$\ddot{a}$hler manifold $(M,\omega)$, and $A(t)$ be a global smooth solution of the modified Yang-Mills flow \eqref{modified flow} with smooth condition ${A}_{0}^{}\in \mathcal{A}_{H_0}^{1,1}$. Then
                                      $\\(1)$ for every sequence $t_k\to \infty$, there is a subsequence $t_{k_j}$ such that as $t_{k_j}\to \infty,$ $A(t_{k_j})$ converges (modulo unitary gauge transformations) to a connection $A_{\infty}$ satisfying
\begin{equation}\label{eq17}
 D_{A_{\infty}}\Lambda_{\omega } F_{A_{\infty}}=0
\end{equation}
on the Hermitian vector bundle $(E_{\infty}, H_{\infty})$ in $C^{\infty}_{loc}$-topology outside $\Sigma_{an},$ where $\Sigma_{an}$ is a closed set of Hausdorff codimension at least $4$.
                                      $\\(2)$ the limiting $({E}_{\infty}^{},{\overline{\partial}}_{{A}_{\infty}^{}}^{})$ can be extended to the whole M as a reflexive sheaf with a holomorphic orthogonal splitting
                                      \begin{equation}
             (E_{\infty}, A_{\infty }, H_{\infty}) =\bigoplus_{i=1}^{l}(E_{\infty}^i, A_{\infty , i }, H_{\infty , i}),
    \end{equation} where $l$ is the number of distinct eigenvalues of $\sqrt{-1}\Lambda_{\omega}F_{A_{\infty}}$ on $X\setminus \Sigma_{an}$ and  $H_{\infty , i}$ is an admissible Hermitian-Einstein metric on reflexive sheaf $E_{\infty}^i$.

     \end{thm}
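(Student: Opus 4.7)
The plan is to combine the long-time existence of the Hermitian-Yang-Mills flow (\cite{Z05}), the gauge equivalence (\ref{modified flow}) with the modified Yang-Mills flow, the astheno-K\"ahler hypothesis to control the full curvature in $L^{2}$, higher-dimensional Uhlenbeck weak compactness, and a Bando-Siu style removable singularity theorem for reflexive sheaves with admissible Hermitian-Einstein metrics. The uniform bounds on the eigenvalues of $\sqrt{-1}\Lambda_{\omega} F$ supplied by Theorem \ref{thm2} will feed into the curvature estimates at several points.

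First I would establish a priori bounds along the flow. The maximum principle applied to the HYM flow (\ref{Flow}) (equivalently to (\ref{modified flow})) gives $\sup_{t\geq 0}\|\sqrt{-1}\Lambda_{\omega}F_{A(t)}\|_{L^{\infty}}<\infty$. Since $\omega$ is astheno-K\"ahler, $\partial\bar\partial\omega^{n-2}=0$, so $\int_M \tr(F_{A(t)}\wedge F_{A(t)})\wedge \omega^{n-2}$ is a topological constant independent of $t$; combining this with the pointwise identity relating $|F_A|^{2}$, $|\sqrt{-1}\Lambda_{\omega}F_A|^{2}$ and the primitive decomposition of $F_A$ yields a uniform $L^{2}$ bound on $F_{A(t)}$. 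Computing $\frac{d}{dt}\|\sqrt{-1}\Lambda_{\omega}F_{A(t)}-\lambda\Id_{E}\|_{L^{2}}^{2}$ along (\ref{modified flow}) and using the Gauduchon condition $\partial\bar\partial\omega^{n-1}=0$ to discard the would-be boundary-type integrals gives monotone decay and the energy inequality $\int_{0}^{\infty} \|D_{A(t)}\Lambda_{\omega}F_{A(t)}\|_{L^{2}}^{2}\, dt<\infty$, so along a suitable subsequence $t_{k_j}\to \infty$ we have $\|D_{A(t_{k_j})}\Lambda_{\omega}F_{A(t_{k_j})}\|_{L^{2}}\to 0$.

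Next, with uniform $L^{2}$ curvature bounds in hand, higher-dimensional Uhlenbeck compactness (\cite{uhlenbeck1982connections,uhlenbeck1986priori}) produces a closed bubbling set $\Sigma_{an}\subset M$ of Hausdorff codimension at least $4$ and a further subsequence such that $A(t_{k_j})$ converges, modulo unitary gauge transformations, weakly in $L^{2}_{1,loc}$ on $M\setminus\Sigma_{an}$ to a connection $A_{\infty}$ on a Hermitian bundle $(E_{\infty},H_{\infty})$. Passing to the limit in the vanishing of $D_A\Lambda_{\omega}F_A$ yields (\ref{eq17}) in the distributional sense, and a Coulomb gauge fix together with an elliptic bootstrap in the spirit of \cite{hong2004asymptotical} upgrades this to $C^{\infty}_{loc}$ convergence, which proves part $(1)$. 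For $(2)$, the parallel Hermitian endomorphism $\sqrt{-1}\Lambda_{\omega}F_{A_{\infty}}$ has locally constant eigenvalues on $M\setminus\Sigma_{an}$; its simultaneous eigenspace decomposition is $A_{\infty}$-parallel and $\bar\partial_{A_{\infty}}$-holomorphic, giving the claimed orthogonal holomorphic splitting into $l$ summands, each satisfying a constant-scalar Hermitian-Einstein equation. A Bando-Siu type removable singularity theorem, in the Gauduchon form used in \cite{LZZ2}, then extends $(E_{\infty},\bar\partial_{A_{\infty}})$ across $\Sigma_{an}$ as a reflexive sheaf on which each $H_{\infty,i}$ becomes admissible Hermitian-Einstein.

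The main obstacle is the absence of K\"ahler identities: the uniform $L^{2}$ curvature bound depends essentially on the astheno-K\"ahler condition $\partial\bar\partial\omega^{n-2}=0$ (to make the relevant Chern-Weil integrand topological), while the monotonicity of the flow functional depends on the Gauduchon condition $\partial\bar\partial\omega^{n-1}=0$; without either, the whole energy method breaks down. A secondary difficulty is adapting higher-dimensional Uhlenbeck compactness and the $\epsilon$-regularity/removable-singularity package to a purely Hermitian background, where K\"ahler normal coordinates are no longer available and the local analysis must absorb torsion-type terms of $\omega$.
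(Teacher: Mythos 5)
This theorem is imported from \cite{NZ2} and is not proven in the present paper, so there is no in-text proof to compare against; the paper only explains, in the remark following the statement, that the astheno-K\"ahler condition serves to control the Yang-Mills energy along the flow so that Uhlenbeck compactness applies. Your sketch is a correct reconstruction of that standard strategy: maximum-principle $L^\infty$ bound on the mean curvature, astheno-K\"ahler ($\partial\bar\partial\omega^{n-2}=0$) to turn the Chern--Weil integrand $\tr(F\wedge F)\wedge\omega^{n-2}$ into a topological constant and thereby derive a uniform $L^{2}$ curvature bound, Gauduchon ($\partial\bar\partial\omega^{n-1}=0$) to get $\int_0^\infty\|D_{A(t)}\Lambda_\omega F_{A(t)}\|_{L^2}^2\,dt<\infty$ (which is precisely Lemma \ref{estK}, stated and proved in Section 2, and in fact gives the stronger conclusion $\lim_{t\to\infty}\|D_H\theta_H\|_{L^2}=0$, not merely convergence along a subsequence), then Uhlenbeck compactness plus an elliptic bootstrap for $C^\infty_{loc}$ convergence, parallelism of $\sqrt{-1}\Lambda_\omega F_{A_\infty}$ for the eigenbundle splitting, and a Bando--Siu removable-singularity argument to extend across $\Sigma_{an}$ as a reflexive sheaf with admissible Hermitian--Einstein metrics on each summand. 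The one caveat is that you invoke Theorem \ref{thm2} to "feed into the curvature estimates," which is circular in spirit since Theorem \ref{thm2} is a new result of the present paper, whereas Theorem \ref{thm1.1} only needs the elementary maximum-principle bound (\ref{bdK}); but this does not affect the correctness of your outline.
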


Utilizing Theorem \ref{thm3}, we further investigate the structure of the Uhlenbeck limit, and give an affirmative answer to the above Atiyah-Bott-Bando-Siu question on some non-K\"ahler manifolds.
 \begin{thm}\label{thm1.9}
 Let $(E,{H}_{0}^{})$ be a Hermitian vector bundle on a compact Gauduchon and astheno-K\"ahler manifold $(M,\omega)$, and $A(t)$ be a global smooth solution of the modified Yang-Mills flow \eqref{modified flow} with smooth condition ${A}_{0}^{}\in \mathcal{A}_{H_0}^{1,1}$. Then
      the extended reflexive sheaf of any Uhlenbeck limiting $(E_{\infty},{\overline{\partial}}_{{A}_{\infty}^{}}^{})$ is isomorphic to the double dual of the graded sheaf associated to the HNS-filtration of the holomorphic vector bundle $(E, \bar{\partial }_{A_{0}})$, i.e.  we have
    \begin{equation}
            ({E}_{\infty}^{},{\overline{\partial}}_{{A}_{\infty}^{}}^{})\cong{Gr}_\omega^{HNS}{(E,{\overline{\partial}}_{{A}_{0}^{}}^{})}^{\ast\ast}.
    \end{equation}
    \end{thm}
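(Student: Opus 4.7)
The plan is to combine the eigenvalue identification provided by Theorem \ref{thm3} with the structural information on the Uhlenbeck limit from Theorem \ref{thm1.1}, and then to compare the limit decomposition with the HNS-filtration via weak $L^2_1$-limits of metric projections, in the spirit of Sibley's and Jacob's arguments in the K\"ahler case and their extension in \cite{LZZ2}.

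First, since the modified Yang-Mills flow \eqref{modified flow} is gauge equivalent to the Hermitian-Yang-Mills flow \eqref{Flow}, Theorem \ref{thm3} tells us that the eigenvalue vector $\vec\lambda(H(t),\omega)$ converges in $L^{2}$ to the Harder-Narasimhan type $\tfrac{2\pi}{\Vol(M,\omega)}\vec\mu_{\omega}(E)$. On the other hand, by Theorem \ref{thm1.1}, on the complement of the analytic bubbling set $\Sigma_{an}$ the limiting endomorphism $\sqrt{-1}\Lambda_{\omega}F_{A_{\infty}}$ is parallel, with a holomorphic orthogonal splitting $(E_{\infty},A_{\infty},H_{\infty})=\bigoplus_{i=1}^{l}(E_{\infty}^{i},A_{\infty,i},H_{\infty,i})$ where $l$ equals the number of distinct eigenvalues. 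Matching constant eigenvalues of $\sqrt{-1}\Lambda_{\omega}F_{A_{\infty}}$ on each $E_{\infty}^{i}$ with the $L^{2}$-limit of $\vec\lambda(H(t),\omega)$ and using Theorem \ref{thmuni} to rule out subsequential ambiguity, I would conclude that the distinct eigenvalues of $\sqrt{-1}\Lambda_{\omega}F_{A_{\infty}}$ are exactly the distinct values in $\tfrac{2\pi}{\Vol(M,\omega)}\vec\mu_{\omega}(E)$, and that $\rank E_{\infty}^{i}=\rank(\mathcal{Q}_{i})$ with $\sqrt{-1}\Lambda_{\omega}F_{A_{\infty,i}}=\tfrac{2\pi}{\Vol(M,\omega)}\mu_{\omega}(\mathcal{Q}_{i})\Id_{E_{\infty}^{i}}$.

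Next, I would construct the sheaf comparison in two steps. Let $\{\mathcal{E}_{\alpha}\}$ be the HN-filtration of $(E,\bar\partial_{E})$, and consider the orthogonal projections $\pi_{\alpha}^{H(t)}$ onto $\mathcal{E}_{\alpha}$ with respect to the flow metric $H(t)$. Using the uniform $L^{2}_{1}$-bound on these projections together with the convergence of $\sqrt{-1}\Lambda_{\omega}F_{H(t)}$ in $L^{2}$ (from Theorems \ref{thm2}, \ref{thmuni}, \ref{thm3}), I would extract weak $L^{2}_{1}$-subsequential limits $\pi_{\alpha}^{\infty}$; by the Uhlenbeck-Yau type result for weak holomorphic subbundles and its astheno-K\"ahler Gauduchon variant used in \cite{NZ2}, each $\pi_{\alpha}^{\infty}$ defines a coherent saturated subsheaf $\mathcal{F}_{\alpha}\subset (E_{\infty},\bar\partial_{A_{\infty}})^{**}$. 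A Chern-Weil/slope computation, together with the fact that the eigenvalue on each summand $E_{\infty}^{i}$ is precisely the HN-slope $\mu_{\omega}(\mathcal{Q}_{\alpha})$, forces $\mathcal{F}_{\alpha}=\bigoplus_{j:\mu_{j}\geq\mu_{\omega}(\mathcal{Q}_{\alpha})}E_{\infty}^{j}$ so that $\mathcal{F}_{\alpha}/\mathcal{F}_{\alpha-1}\cong E_{\infty}^{\alpha}$ (after reindexing). This produces a sheaf morphism $\mathcal{Q}_{\alpha}\to E_{\infty}^{\alpha}$ which is injective off $\Sigma_{an}$ and hence generically an isomorphism since the ranks agree; thus it induces an isomorphism $\mathcal{Q}_{\alpha}^{**}\cong (E_{\infty}^{\alpha},\bar\partial_{A_{\infty,\alpha}})$.

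Finally, since each $(E_{\infty}^{\alpha},H_{\infty,\alpha})$ carries an admissible Hermitian-Einstein metric on a reflexive sheaf, the sheaf version of the Donaldson-Uhlenbeck-Yau theorem in the Gauduchon astheno-K\"ahler setting (used also in \cite{NZ2}) implies $E_{\infty}^{\alpha}$ is $\omega$-polystable, hence splits as a direct sum of stable reflexive sheaves of slope $\mu_{\omega}(\mathcal{Q}_{\alpha})$. On the other hand, $\mathcal{Q}_{\alpha}$ is semistable with the same slope and its Seshadri filtration has stable quotients with slope $\mu_{\omega}(\mathcal{Q}_{\alpha})$; taking double duals and applying the previous paragraph piece by piece, I would match the stable factors of $\mathcal{Q}_{\alpha}^{**}$ with those of $E_{\infty}^{\alpha}$. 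Summing over $\alpha$ yields the desired identification $(E_{\infty},\bar\partial_{A_{\infty}})\cong Gr_{\omega}^{HNS}(E,\bar\partial_{A_{0}})^{**}$. The main obstacle I anticipate is the rigorous construction of the subsheaves $\mathcal{F}_{\alpha}$ as honest coherent subsheaves of the extended reflexive sheaf $(E_{\infty},\bar\partial_{A_{\infty}})^{**}$: one must control the projections $\pi_{\alpha}^{H(t)}$ across the bubbling set, pass to weak $L^{2}_{1}$-limits without losing holomorphicity, and verify the slope identities in the non-K\"ahler setting using only the Gauduchon and astheno-K\"ahler assumptions, where $\pbp\omega^{n-1}=0$ and $\pbp\omega^{n-2}=0$ are both needed to make the Chern-Weil manipulations meaningful.
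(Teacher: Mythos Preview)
Your identification of the HN-type of the Uhlenbeck limit via Theorem \ref{thm3} is correct and is indeed the paper's new ingredient replacing the slope-discreteness argument of \cite{DW}. However, there is a genuine gap in the middle of your sketch: the sentence ``This produces a sheaf morphism $\mathcal{Q}_{\alpha}\to E_{\infty}^{\alpha}$'' is not justified, and it is precisely the hard part.

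The weak $L^{2}_{1}$-limits $\pi_{\alpha}^{\infty}$ of the (gauge-transformed) metric projections give you coherent subsheaves $\mathcal{F}_{\alpha}\subset (E_{\infty},\bar\partial_{A_{\infty}})$ and the identification $\mathcal{F}_{\alpha}/\mathcal{F}_{\alpha-1}\cong E_{\infty}^{\alpha}$ inside the limit. They do \emph{not} give a holomorphic map from $\mathcal{Q}_{\alpha}=\mathcal{E}_{\alpha}/\mathcal{E}_{\alpha-1}\subset (E,\bar\partial_{A_{0}})$ to anything in $E_{\infty}$, because the complex gauge transformations $\sigma(t)$ relating $(E,\bar\partial_{A_{0}})$ to $(E,\bar\partial_{A(t)})$ degenerate as $t\to\infty$. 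Knowing that $\mathcal{Q}_{\alpha}^{**}$ and $E_{\infty}^{\alpha}$ are both (poly)stable reflexive sheaves of the same rank and slope does not force them to be isomorphic; you need an actual nonzero holomorphic map, after which rigidity for stable sheaves of equal slope finishes the job. The same issue recurs in your final paragraph when you try to match Seshadri factors.

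The paper fills this gap with Proposition \ref{proposition2.1}: for the inclusion $i_{0}:\mathcal{S}\hookrightarrow E$ one rescales the sequence $g_{j}\circ i_{0}$ by its $L^{2}$-norm (measured with metrics obtained by running Donaldson's heat flow on both $\mathcal{S}$ and $E$ for a fixed positive time) and proves that a subsequence converges in $C^{\infty}_{loc}$ off $\Sigma_{alg}\cup\Sigma_{an}$ to a \emph{nonzero} holomorphic map $f_{\infty}:\mathcal{S}\to E_{\infty}$. Establishing nontriviality requires uniform mean-curvature estimates for the auxiliary heat flows on torsion-free sheaves after a finite sequence of blow-ups (Proposition \ref{lemma2.2}, Lemmas \ref{lemma4.2}--\ref{lemma4.3}), together with a Moser-iteration argument controlling $|i_{0}|^{2}$ pointwise by its integral. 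The proof then proceeds by induction on the successive quotients $\mathcal{Q}^{\alpha,\beta}=E/\mathcal{E}_{\alpha,\beta-1}$, each time verifying the hypotheses of Proposition \ref{proposition2.1} anew. Your outline would need this entire mechanism (or an equivalent one) to go through.
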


    \medskip

     \begin{rem} The astheno-K\"ahler condition here is mainly used to control the Yang-Mills energy along the modified Yang-Mills flow \eqref{modified flow}, and then we can analyze the limiting behavior by Uhlenbeck's compactness. It is natural to ask that, in the absence of the astheno-K\"ahler condition, whether the Yang-Mills energy is uniformly bounded along the modified Yang-Mills flow.
     \end{rem}

Compared to the works on the Atiyah-Bott-Bando-Siu question in K\"ahler case ((\cite{DW,jacob2016yang,Sib,LZZ2}), the main difference is the proof that the HN-type of the limiting sheaf is equal to that of the initial holomorphic vector bundle $(E, \bar{\partial}_{A_0})$. Previous method (\cite[Lemma 4.3]{DW}) relies on the discreteness of slopes of the related sheaves on compact K\"ahler manifold. Instead, we give a new argument avoiding this restriction.

This paper is organized as follows. In Section 2,  we present some preliminaries and estimates which will be used in the proofs of Theorems \ref{thm2} and \ref{thmuni}. In Section 3, we analyze the asymptotic behavior of the Hermitian-Yang-Mills flow, and complete the proofs of Theorem \ref{thm2}, Theorem \ref{thmuni} and Theorem \ref{thm3}. In Section 4, we give some applications and show Theorem \ref{ample}. Section 5 is devoted to prove Theorem \ref{thm1.9}.

\medskip

{\bf  Acknowledgement:}  The research was supported by the National Key R and D Program of China 2020YFA0713100.
The  authors are partially supported by NSF in China No.12141104, 11801535, 11721101 and 11625106, the China Postdoctoral Science Foundation (No.2018M640583, 2018M642515) and the Fundamental Research Funds for the Central Universities.

\medskip

\section{Preliminary results}

\subsection{Stability and the Harder-Narasimhan filtration.}Assume $(M, \omega ) $ is an $n$-dimensional compact Gauduchon manifold and $\mathcal{F}$ is a torsion-free sheaf over $M$. The $\omega $-degree of $\mathcal{F}$ is given by
\begin{equation}
\deg_{\omega }(\mathcal{F} ):=\deg_{\omega }(\det(\mathcal{F}) )=\int_{M}c_{1}(\det(\mathcal{F}), H)\wedge \frac{\omega^{n-1}}{(n-1)!},
\end{equation}
where $H$ is an arbitrary Hermitian metric on $\det{\mathcal{F}}$. This is a well-defined real number independent of $H$ since $\omega^{n-1}$ is $\partial \bar{\partial }$-closed. We define the $\omega$-slope of $\mathcal{F}$ as
\begin{equation}
\mu_{\omega }(\mathcal{F} ):=\frac{\deg_{\omega }(\mathcal{F} )}{\rank (\mathcal{F} )}.
\end{equation}
 A holomorphic vector bundle $(E , \bar{\partial }_{E})$ is called $\omega$-stable (resp. $\omega$-semistable) if for every proper saturated subsheaf $\mathcal{S}\subset E$, there holds
\begin{equation}
\mu_{\omega}(\mathcal{S})<(resp. \leq)\mu_{\omega}(E).
\end{equation}
We say $H$  is
a Hermitian-Einstein metric on $(E, \bar{\partial }_{E} )$ if it satisfies
\begin{equation}
\sqrt{-1}\Lambda_{\omega} F_{H} =\lambda\cdot \mathrm{Id}_{E},
\end{equation}
where  $\lambda =\frac{2\pi }{\Vol(M, \omega )}\mu_{\omega}(E) $. The classical Donaldson-Uhlenbeck-Yau theorem (\cite{NS65,DON85,UY86}) states that, when $\omega$ is K\"ahler, the stability implies the existence of  Hermitian-Einstein metric. According to \cite{Bu,LT}, we know that the Donaldson-Uhlenbeck-Yau theorem is also valid for compact Gauduchon manifolds. There
are  many other interesting and important works related (\cite{ag,BS,Bi,bis0,bis,br,DW, HIT, HuLe,Jacob1,Kobayashi,JZ, LN2, LT,LT95, LZ, LZZ, LZZ2, Mo1, Mo2, Mo3,NR01,SIM,SIM2,WZ}, etc.).

Without the stability
assumption, we need the following result which also holds for an arbitrary torsion-free sheaf.
\begin{prop}[\cite{Kobayashi, Bru}]
Let $(E, \bar{\partial}_{E})$ be a holomorphic vector bundle over a compact Gauduchon manifold $(M, \omega)$. Then there is a unique filtration of $E$ by subsheaves
\begin{equation}\label{HNS01}
0=\mathcal{E}_{0}\subset \mathcal{E}_{1}\subset \cdots \subset \mathcal{E}_{l}=E
\end{equation}
such that every quotient sheaf $\mathcal{Q}_{\alpha}=\mathcal{E}_{\alpha}/\mathcal{E}_{\alpha-1}$ is torsion-free and $\omega$-semistable, and $\mu_{\omega } (\mathcal{Q}_{\alpha})>\mu_{\omega } (\mathcal{Q}_{\alpha+1})$. This filtration is called the Harder-Narasimhan filtration (abbr. HN-filtration) of $(E, \bar{\partial}_{E})$.
\end{prop}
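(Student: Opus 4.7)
My plan is to follow the classical Harder--Narasimhan template, adapted to the Gauduchon setting where slopes are real numbers and one cannot rely on projective-algebraic tools. The key technical input, due to the cited work of Bruasse and Kobayashi, is that for the fixed $(E,\bar{\partial}_E)$ the slope $\mu_\omega(\mathcal{F})$ is bounded above as $\mathcal{F}$ ranges over nonzero torsion-free coherent subsheaves of $E$. One fixes a Hermitian metric $H$ on $E$, represents any saturated subsheaf (outside its singular set) by an $L^2_1$ weakly holomorphic orthogonal projection $\pi_{\mathcal{F}}$, and applies the Chern--Weil identity
\[
\deg_{\omega}(\mathcal{F}) = \int_M \bigl(\tr(\pi_{\mathcal{F}}\,\sqrt{-1}\Lambda_\omega F_H) - |\bar{\partial}\pi_{\mathcal{F}}|_H^2\bigr)\,\frac{\omega^n}{n!},
\]
whose validity requires precisely $\partial\bar{\partial}\omega^{n-1}=0$. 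The first term is controlled by $\|\sqrt{-1}\Lambda_\omega F_H\|_{L^\infty}\cdot\Vol(M,\omega)\cdot\rank\mathcal{F}$ while the second is non-negative, giving the desired upper bound on $\mu_\omega(\mathcal{F})$.

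Setting $\mu_{\max}(E):=\sup_{\mathcal{F}}\mu_\omega(\mathcal{F})$, now known to be finite, I would construct the maximal destabilizer $\mathcal{E}_1$ as a saturated subsheaf attaining $\mu_{\max}(E)$ and of maximal rank among such. Existence comes from taking a maximizing sequence $\mathcal{F}_k$, extracting a weak $L^2_1$-limit of the projections $\pi_{\mathcal{F}_k}$, and invoking the Uhlenbeck--Yau regularity theorem for weakly holomorphic subbundles (in the form available on Gauduchon manifolds) to promote this limit to an honest saturated coherent subsheaf. Uniqueness and maximality come from the standard observation that for two saturated subsheaves $\mathcal{F},\mathcal{G}$ of slope $\mu_{\max}(E)$, the sheaf $\mathcal{F}+\mathcal{G}$ again has slope $\ge \mu_{\max}(E)$, so one may keep enlarging without exceeding rank $r$.

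The remainder is a standard induction on rank. First, $\mathcal{E}_1$ is $\omega$-semistable: any proper saturated $\mathcal{G}\subset \mathcal{E}_1$ is also a subsheaf of $E$, so $\mu_\omega(\mathcal{G})\le \mu_{\max}(E)=\mu_\omega(\mathcal{E}_1)$. The quotient $E/\mathcal{E}_1$ is torsion-free since $\mathcal{E}_1$ is saturated, and $\mu_{\max}(E/\mathcal{E}_1)<\mu_\omega(\mathcal{E}_1)$ by maximality of $\mathcal{E}_1$ in $E$. Applying the induction hypothesis to $E/\mathcal{E}_1$ furnishes a HN-filtration of $E/\mathcal{E}_1$; pulling back yields the desired chain $\mathcal{E}_0\subset \mathcal{E}_1\subset\cdots\subset\mathcal{E}_l=E$, and the strict drop $\mu_\omega(\mathcal{Q}_\alpha)>\mu_\omega(\mathcal{Q}_{\alpha+1})$ is automatic. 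Uniqueness of the entire filtration reduces to uniqueness of $\mathcal{E}_1$, already established, followed by the same induction.

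The main obstacle is unequivocally the first step: on an only-Gauduchon manifold there is no Bogomolov inequality, no rationality of $\deg_\omega$, and no algebraic Quot scheme to produce subsheaves realizing the supremum. Every workaround, namely boundedness of slopes, compactness of maximizing sequences, and coherence of the weak limit, passes through the theory of $L^2_1$ weakly holomorphic subbundles together with the Chern--Weil identity made valid by $\partial\bar{\partial}\omega^{n-1}=0$. Once this analytic machinery is in place, the remainder is a routine transcription of the classical K\"ahler argument.
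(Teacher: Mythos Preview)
The paper does not supply its own proof of this proposition; it is stated as a preliminary result with a citation to \cite{Kobayashi, Bru} and used without further argument. Your outline is essentially the analytic construction carried out in Bruasse's paper (the reference \cite{Bru}): boundedness of slopes via the Chern--Weil identity valid under the Gauduchon condition, extraction of a maximal destabilizer through weak $L^2_1$-limits of projections and the Uhlenbeck--Yau regularity theorem for weakly holomorphic subbundles, followed by the standard induction on rank. So there is nothing to compare against in the present paper, and your proposal matches the approach of the cited source.
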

If $\rank(E)= r$, from the Harder-Narasimhan filtration, we have a nonincreasing $r$-tuple of numbers
\begin{equation}
\vec{\mu}_\omega(E )=(\mu_{1, \omega}(E), \cdots, \mu_{r, \omega}(E)),
\end{equation}
where $\mu_{i, \omega}(E)=\mu_{\omega} (\mathcal{Q}_{\alpha})$, for $\rank(\mathcal{E}_{\alpha -1})+1\leq i\leq \rank(\mathcal{E}_{\alpha})$. We call $\vec{\mu}_\omega(E)$ the Harder-Narasimhan type (abbr. HN-type) of $(E, \bar{\partial}_{E})$.

There is an analogous filtration for semistable sheaves.
\begin{prop}[\cite{Kobayashi, Bru}]
Let $\mathcal{Q}$ be a semi-stable torsion free sheaf over a compact Gauduchon manifold $(M, \omega)$. Then there is a filtration
\begin{equation}
0=\mathcal{F}_{0}\subset\mathcal{F}_{1}\subset\cdots\mathcal{F}_{k}=\mathcal{Q},
\end{equation}
called a Seshadri filtration of $\mathcal{Q}$, such that the quotients $\mathcal{F}_{\alpha}/\mathcal{F}_{\alpha-1}$ are torsion-free and $\omega$-stable, $\mu_{\omega}(\mathcal{F}_{\alpha}/\mathcal{F}_{\alpha-1})=\mu_{\omega}(\mathcal{Q})$ for each $\alpha$. While such a filtration may not be unique, the associated graded object $Gr_\omega^{S}(\mathcal{Q},\overline{\partial}_{\mathcal{Q}})=\oplus_{\alpha=1}^{k}\mathcal{F}_{\alpha}/\mathcal{F}_{\alpha-1}$ is uniquely determined by $\mathcal{Q}$ up to an isomorphism.
\end{prop}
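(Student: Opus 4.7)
My plan is to prove existence of the Seshadri filtration by induction on $\rank(\mathcal{Q})$ and then establish uniqueness of the graded object via a comparison argument exploiting that any nonzero morphism between $\omega$-stable torsion-free sheaves of equal slope is generically an isomorphism. The Gauduchon hypothesis $\partial\bar\partial\omega^{n-1}=0$ guarantees that $\deg_\omega$ is well-defined and additive across short exact sequences of coherent sheaves, so the slope algebra behaves exactly as in the classical K\"ahler case.

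For existence, I would induct on $\rank(\mathcal{Q})$. The base case is $\mathcal{Q}$ stable, where the trivial filtration $0\subset\mathcal{Q}$ works. Otherwise, among all proper saturated subsheaves $\mathcal{S}\subset\mathcal{Q}$ with $\mu_\omega(\mathcal{S})=\mu_\omega(\mathcal{Q})$---nonempty since $\mathcal{Q}$ is strictly semistable---choose $\mathcal{F}_1$ of minimal positive rank. Then $\mathcal{F}_1$ is stable: any proper saturated $\mathcal{R}\subset\mathcal{F}_1$ satisfies $\mu_\omega(\mathcal{R})\leq\mu_\omega(\mathcal{Q})=\mu_\omega(\mathcal{F}_1)$ by semistability of $\mathcal{Q}$, and equality would contradict minimality of $\rank\mathcal{F}_1$. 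The quotient $\mathcal{Q}/\mathcal{F}_1$ is torsion-free (because $\mathcal{F}_1$ is saturated) and semistable of slope $\mu_\omega(\mathcal{Q})$: any saturated $\mathcal{R}\subset\mathcal{Q}/\mathcal{F}_1$ pulls back to $\tilde{\mathcal{R}}\subset\mathcal{Q}$ containing $\mathcal{F}_1$, and the sequence $0\to\mathcal{F}_1\to\tilde{\mathcal{R}}\to\mathcal{R}\to 0$ together with $\mu_\omega(\tilde{\mathcal{R}})\leq\mu_\omega(\mathcal{Q})$ and $\mu_\omega(\mathcal{F}_1)=\mu_\omega(\mathcal{Q})$ forces $\mu_\omega(\mathcal{R})\leq\mu_\omega(\mathcal{Q})$. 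The induction hypothesis then produces a Seshadri filtration of $\mathcal{Q}/\mathcal{F}_1$, and pulling back along $\mathcal{Q}\to\mathcal{Q}/\mathcal{F}_1$ extends it to the desired filtration of $\mathcal{Q}$.

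For uniqueness of $Gr_\omega^S(\mathcal{Q},\bar\partial_\mathcal{Q})$, I would compare two Seshadri filtrations $\{\mathcal{F}_\alpha\}$ and $\{\mathcal{G}_\beta\}$. The essential lemma is that any nonzero morphism $\varphi:\mathcal{A}\to\mathcal{B}$ between $\omega$-stable torsion-free sheaves with $\mu_\omega(\mathcal{A})=\mu_\omega(\mathcal{B})$ is injective and its image agrees with $\mathcal{B}$ outside an analytic set of codimension $\geq 2$: stability of $\mathcal{A}$ forces $\ker\varphi=0$, and stability of $\mathcal{B}$ combined with the slope equality pushes the saturation of $\mathrm{im}\,\varphi$ to coincide with $\mathcal{B}$. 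Letting $\beta_0$ be the smallest index with $\mathcal{F}_1\subset\mathcal{G}_{\beta_0}$, the composition $\mathcal{F}_1\hookrightarrow\mathcal{G}_{\beta_0}\twoheadrightarrow\mathcal{G}_{\beta_0}/\mathcal{G}_{\beta_0-1}$ is nonzero, so the lemma identifies $\mathcal{F}_1$ with one summand of $\bigoplus_\beta\mathcal{G}_\beta/\mathcal{G}_{\beta-1}$ in codimension $\geq 2$. Passing to the induced Seshadri filtrations on $\mathcal{Q}/\mathcal{F}_1$ inherited from both sides, and inducting on rank, then yields the isomorphism of the two graded sheaves.

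The principal obstacle is the uniqueness step in the non-K\"ahler Gauduchon setting, where one must work modulo analytic subsets of codimension $\geq 2$ because torsion-free coherent sheaves need not be locally free along their singular loci; the slope inequalities and stability arguments have to remain valid there. This is handled by the slope theory for Gauduchon manifolds developed in Bruasse's work and Kobayashi's book cited in the statement, which guarantees that all the degree computations used above survive the passage through such singular loci.
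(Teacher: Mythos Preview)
The paper does not prove this proposition; it is quoted as a background result from \cite{Kobayashi, Bru}, so there is no in-paper argument to compare against. Your sketch follows the classical Jordan--H\"older pattern used in those references, and the existence half is correct and standard.

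The uniqueness half, however, has a genuine gap at exactly the point you flag. For $\mu$-stability (as opposed to Gieseker stability), a nonzero morphism between stable torsion-free sheaves of equal slope need \emph{not} be an isomorphism: on a surface the inclusion $\mathcal{I}_p\hookrightarrow\mathcal{O}$ of the ideal sheaf of a point is a nonzero map between stable rank-one sheaves of slope $0$ with nonzero torsion cokernel. Your argument establishes that $\mathcal{F}_1\to\mathcal{G}_{\beta_0}/\mathcal{G}_{\beta_0-1}$ is injective with torsion cokernel supported in codimension $\geq 2$, but ``agreeing in codimension $\geq 2$'' does not propagate through the induction to an isomorphism of graded sheaves---two torsion-free sheaves can agree off a codimension-two set without being isomorphic. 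The sentence ``inducting on rank then yields the isomorphism of the two graded sheaves'' therefore does not follow from what precedes it. The complete argument in the cited references uses that $\mathcal{F}_1$ is \emph{saturated in $\mathcal{Q}$}, hence saturated in $\mathcal{G}_{\beta_0}$, and that degree is strictly positive on effective divisors with respect to a positive Gauduchon form; combining these one shows the image of $\mathcal{F}_1$ is itself saturated in $\mathcal{G}_{\beta_0}/\mathcal{G}_{\beta_0-1}$ and of the same degree, forcing equality. You gesture toward this by deferring to Bruasse and Kobayashi, but the step is the crux of the proof and deserves to be carried out rather than cited.
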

From this, one can get a refinement of the Harder–Narasimhan filtration by decomposing the semi-stable factors into filtrations whose quotients are actually stable.
\begin{prop}[\cite{Kobayashi, Bru}]
Let $(E, \bar{\partial}_{E})$ be a holomorphic vector bundle over a compact Gauduchon manifold $(M, \omega)$. Then there is a double filtration $\{E_{\alpha, \beta}\}$, called the Harder-Narasimhan-Seshadri filtration (abbr. HNS-filtration) of $(E, \bar{\partial}_{E})$, with the following properties: if $0=\mathcal{E}_{0}\subset \mathcal{E}_{1}\subset \cdots \subset \mathcal{E}_{l}=E$ is the Harder-Narasimhan filtration, then
\begin{equation}
\mathcal{E}_{\alpha-1}=\mathcal{E}_{\alpha, 0}\subset \mathcal{E}_{\alpha, 1}\subset \cdots \subset \mathcal{E}_{\alpha, k_\alpha}=\mathcal{E}_{\alpha},
\end{equation}
where the successive quotients $\mathcal{Q}_{\alpha, \beta}=\mathcal{E}_{\alpha, \beta}/\mathcal{E}_{\alpha, \beta-1}$ are torsion-free and $\omega$-stable. Moreover, $\mu_{\omega}(\mathcal{Q}_{\alpha, \beta})=\mu_{\omega}(\mathcal{Q}_{\alpha, \beta+1})$ and $\mu_{\omega}(\mathcal{Q}_{\alpha, \beta})> \mu_{\omega}(\mathcal{Q}_{\alpha+1, \beta})$.
The associated graded object
\begin{equation}
Gr_\omega^{HNS}(E,\overline{\partial}_E)=\oplus_{\alpha=1}^{l}\oplus_{\beta=1}^{k_\alpha}\mathcal{Q}_{\alpha, \beta}
\end{equation}
uniquely determined by the isomorphism class of $E$.
\end{prop}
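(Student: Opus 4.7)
The plan is to build the Harder--Narasimhan--Seshadri filtration by combining the two propositions already stated: start from the HN-filtration of $E$, apply the Seshadri proposition to each semistable quotient, then pull these Seshadri filtrations back to the corresponding HN-layer via the quotient map. The claimed slope inequalities and the uniqueness of the graded object should then fall out of the uniqueness statements already available.

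More precisely, the first step is to invoke the HN-filtration proposition to obtain $0=\mathcal{E}_{0}\subset \mathcal{E}_{1}\subset \cdots \subset \mathcal{E}_{l}=E$ with $\mathcal{Q}_{\alpha}=\mathcal{E}_{\alpha}/\mathcal{E}_{\alpha-1}$ torsion-free and $\omega$-semistable, and $\mu_{\omega}(\mathcal{Q}_{\alpha})>\mu_{\omega}(\mathcal{Q}_{\alpha+1})$. The second step is to apply the Seshadri proposition to each $\mathcal{Q}_{\alpha}$, producing a filtration $0=\mathcal{F}_{\alpha,0}\subset \mathcal{F}_{\alpha,1}\subset \cdots \subset \mathcal{F}_{\alpha,k_{\alpha}}=\mathcal{Q}_{\alpha}$ whose successive quotients are torsion-free and $\omega$-stable, all of slope $\mu_{\omega}(\mathcal{Q}_{\alpha})$. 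The third step is to lift these via the surjection $q_{\alpha}\colon \mathcal{E}_{\alpha}\to \mathcal{Q}_{\alpha}$: set $\mathcal{E}_{\alpha,\beta}:=q_{\alpha}^{-1}(\mathcal{F}_{\alpha,\beta})$. Then $\mathcal{E}_{\alpha,0}=\mathcal{E}_{\alpha-1}$, $\mathcal{E}_{\alpha,k_{\alpha}}=\mathcal{E}_{\alpha}$, and the snake-style isomorphism
\begin{equation*}
\mathcal{Q}_{\alpha,\beta}:=\mathcal{E}_{\alpha,\beta}/\mathcal{E}_{\alpha,\beta-1}\;\cong\;\mathcal{F}_{\alpha,\beta}/\mathcal{F}_{\alpha,\beta-1}
\end{equation*}
is immediate from the definition of the preimage, so each $\mathcal{Q}_{\alpha,\beta}$ is torsion-free and $\omega$-stable with $\mu_{\omega}(\mathcal{Q}_{\alpha,\beta})=\mu_{\omega}(\mathcal{Q}_{\alpha})$. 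The slope identities within each block and the strict decrease across blocks then follow by combining this equality with the HN strict-decrease property.

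It remains to establish uniqueness of the graded object $Gr_\omega^{HNS}(E,\overline{\partial}_E)=\bigoplus_{\alpha,\beta}\mathcal{Q}_{\alpha,\beta}$. The outer HN-filtration is unique on the nose, so the collection $\{\mathcal{Q}_{\alpha}\}$ is canonically attached to $E$. For each $\alpha$, the Seshadri proposition already supplies a canonical (up to isomorphism) graded object $Gr_\omega^{S}(\mathcal{Q}_{\alpha},\overline{\partial}_{\mathcal{Q}_{\alpha}})=\bigoplus_{\beta}\mathcal{F}_{\alpha,\beta}/\mathcal{F}_{\alpha,\beta-1}$. Hence
\begin{equation*}
Gr_\omega^{HNS}(E,\overline{\partial}_E)\;\cong\;\bigoplus_{\alpha=1}^{l} Gr_\omega^{S}(\mathcal{Q}_{\alpha},\overline{\partial}_{\mathcal{Q}_{\alpha}})
\end{equation*}
is well-defined up to isomorphism, independent of the Seshadri refinement chosen at each stage.

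The construction itself is essentially formal, so I do not anticipate a serious obstacle: the content is entirely carried by the two cited propositions. The only subtlety worth flagging is checking that pulling back a Seshadri filtration of a torsion-free quotient sheaf through $q_{\alpha}$ produces genuine coherent subsheaves of $\mathcal{E}_{\alpha}$ on the (possibly) non-K\"ahler Gauduchon manifold; this is a standard coherence-preservation fact for preimages under a surjection of coherent sheaves and does not use any K\"ahler hypothesis. Everything else is bookkeeping.
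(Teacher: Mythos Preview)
Your proposal is correct and follows the standard construction. Note, however, that the paper does not actually prove this proposition: it is stated as a preliminary result cited from \cite{Kobayashi, Bru}, with no proof given in the paper itself. Your sketch---combining the unique HN-filtration with Seshadri filtrations of the semistable quotients and pulling back via the quotient maps---is precisely the standard argument found in those references, so there is nothing to compare.
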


\subsection{The Hermitian-Yang-Mills flow}Let $(M, \omega )$ be a compact Hermitian manifold of complex dimension $n$, and $(E, \bar{\partial}_{E})$ be a rank $r$ holomorphic vector
bundle over $M$. If $H$ is a Hermitian metric on $E$, we denote the Chern connection by $D_{H}$ and the curvature form by $F_{H}$. Suppose $K$ is another Hermitian metric and set $h=K^{-1}H$. We have the following identities
\begin{equation}\label{id1}
\begin{split}
& \partial _{H}-\partial_{K} =h^{-1}\partial_{K}h,\\
& F_{H}-F_{K}=\bar{\partial }_{E} (h^{-1}\partial_{K} h) ,\\
\end{split}
\end{equation}
where $\partial _{H}=D^{1, 0}_{H}$ and $\partial _{K}=D^{1, 0}_{K}$ are the $(1, 0)$-parts of $D_H$ and $D_K$, respectively.

Here we recall some results on the Hermitian-Yang-Mills flow (\ref{Flow}). With the notation $h(t)=K^{-1}H(t)$, equation (\ref{Flow}) can be rewritten as
\begin{equation}
\frac{\partial h(t)}{\partial
t}=-2\sqrt{-1}h(t)\Lambda_{\omega}(F_{K}+\bar{\partial
}_{E}(h^{-1}(t)\partial_{K}h(t)))+2\lambda h(t).
\end{equation}
By (\ref{Flow}), the evolution equation of $\sqrt{-1}\Lambda_{\omega}F_{H(t)}$ is
\begin{equation}\label{evlK}
\frac{\partial \sqrt{-1}\Lambda_{\omega}F_{H(t)}}{\partial t}=-2\sqrt{-1}\Lambda_\omega\bar\partial_E\partial_{H(t)} \sqrt{-1}\Lambda_{\omega}F_{H(t)}.
\end{equation}

\begin{prop}[\cite{DON85,Z05}]
The Hermitian-Yang-Mills flow (\ref{Flow}) with initial data $H_{0}$ must have a unique solution $H(t)$ which exists for $0\leq t < +\infty$.
\end{prop}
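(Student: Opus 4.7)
The plan is to follow the classical parabolic PDE strategy: short-time existence via linearization, uniqueness via a maximum principle, and a priori estimates promoting local to global existence. Fix a background Hermitian metric $K$ on $E$ and write $h(t) = K^{-1}H(t)$. Equation \eqref{Flow} then takes the quasilinear form
\begin{equation*}
\frac{\partial h}{\partial t} = -2\sqrt{-1}\, h\,\Lambda_\omega\bigl(F_K + \bar\partial_E(h^{-1}\partial_K h)\bigr) + 2\lambda h,
\end{equation*}
which is strictly parabolic of second order on the open cone of positive self-adjoint endomorphisms. Linearizing at $h(0) = \Id$ and applying the implicit function theorem in H\"older spaces produces a unique smooth solution on a short interval $[0,\varepsilon)$.

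For uniqueness on a common existence interval, suppose $H_1(t), H_2(t)$ are two solutions with $H_1(0) = H_2(0) = H_0$, and test the non-negative comparison quantity $\sigma(H_1, H_2) = \tr(H_1^{-1}H_2) + \tr(H_2^{-1}H_1) - 2r$. A direct computation using \eqref{Flow} yields a parabolic differential inequality of the form $(\partial_t - \Delta_\omega)\sigma \leq C\sigma$ on $M \times [0,T)$ with initial value $\sigma(\cdot,0) \equiv 0$, and the maximum principle forces $\sigma \equiv 0$, hence $H_1 \equiv H_2$.

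The crux of long-time existence is ruling out finite-time blow-up. Suppose $[0,T)$ is the maximal interval of existence with $T < \infty$. The key a priori bound comes from applying the maximum principle to the evolution equation \eqref{evlK} for the scalar $|\sqrt{-1}\Lambda_\omega F_{H(t)} - \lambda\Id|^2_{H(t)}$, yielding
\begin{equation*}
\sup_M |\sqrt{-1}\Lambda_\omega F_{H(t)} - \lambda\Id|^2 \leq e^{Ct}\sup_M |\sqrt{-1}\Lambda_\omega F_{H_0} - \lambda\Id|^2.
\end{equation*}
Since $\frac{\partial h}{\partial t}\,h^{-1} = -2(\sqrt{-1}\Lambda_\omega F_{H} - \lambda\Id)$, integrating in time controls $h$ and $h^{-1}$ in $C^0$ uniformly on $[0,T)$. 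Standard parabolic Schauder estimates applied to the quasilinear equation then bootstrap this into uniform $C^k$ bounds for every $k$ on $[0,T)$, so $H(t)$ extends smoothly past $T$, contradicting maximality.

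The main obstacle in the generalization from the K\"ahler to the general Hermitian setting, carried out in \cite{Z05}, is that $\Lambda_\omega$ no longer commutes cleanly with $\bar\partial_E$ and $\partial_{H(t)}$, so \eqref{evlK} acquires extra first-order torsion terms arising from $d\omega \neq 0$. These terms are of lower order and can be absorbed by the maximum principle at the cost of an exponential-in-time weight, but they require careful torsion-Bochner bookkeeping in every step of the estimate, which is the technical content absent from Donaldson's original K\"ahler argument.
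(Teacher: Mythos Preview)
The paper does not give its own proof of this proposition; it is quoted as a result from \cite{DON85,Z05}. Your sketch follows the standard strategy of those references and is broadly correct, but your account of where the non-K\"ahler difficulty lies is off. You assert that in the Hermitian case the evolution equation for $|\sqrt{-1}\Lambda_\omega F_H-\lambda\Id|^2$ ``acquires extra first-order torsion terms'' forcing an exponential-in-time weight. In fact, as the paper records in Proposition~\ref{Flow002} (equation~\eqref{eqmc}, taken directly from \cite{Z05}), the identity
\[
\Bigl(2\sqrt{-1}\Lambda_\omega\partial\bar\partial-\frac{\partial}{\partial t}\Bigr)|\Phi(H(t))|^2_{H(t)}=2|D_{H(t)}\Phi(H(t))|^2_{H(t)}\geq 0
\]
holds exactly, with no torsion correction, and the maximum principle gives the clean bound \eqref{bdPhi} without any exponential factor. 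The point is that $2\sqrt{-1}\Lambda_\omega\partial\bar\partial$ on scalar functions differs from the Hodge Laplacian only by first-order terms, never zeroth-order, so it still satisfies the maximum principle; and \eqref{evlK} itself is a pointwise algebraic consequence of the flow, unaffected by $d\omega$. Similarly, your uniqueness inequality is actually $(\partial_t-2\sqrt{-1}\Lambda_\omega\partial\bar\partial)\sigma\leq 0$ on the nose (compare the paper's computation \eqref{evlh'}), not merely $\leq C\sigma$.

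The genuine extra content of \cite{Z05} over \cite{DON85} is not in these scalar maximum-principle steps but in the higher-order estimates: once $|\Lambda_\omega F_H|$ and $h$ are bounded in $C^0$, one still needs a $C^1$ bound on $h$ before Schauder theory can bootstrap, and Donaldson's argument for this uses K\"ahler identities that fail when $d\omega\neq 0$. Your final sentence ``standard parabolic Schauder estimates then bootstrap'' glosses over exactly the place where the Hermitian generalization requires new work.
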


When no confusion can arise, we sometimes omit the parameter $t$ in the sequel for simplicity. Set $\Phi (H)=\sqrt{-1}\Lambda_{\omega}F_{H} -\lambda \cdot \textmd{Id}_E$, then we have

\begin{prop}[{\cite[Proposition 2.2]{Z05}}]\label{Flow002}
Let $H(t)$ be a solution of Hermitian-Yang-Mills flow (\ref{Flow}).
Then
\begin{equation}\label{eqmc01}
(2\sqrt{-1}\Lambda_{\omega }\partial \bar{\partial }-\frac{\partial }{\partial t})\tr \Phi (H(t))=0
\end{equation}
and
\begin{equation}\label{eqmc}
(2\sqrt{-1}\Lambda_{\omega }\partial \bar{\partial }-\frac{\partial }{\partial t})|\Phi (H(t))|_{H(t)}^{2}=2|D_{H(t)}\Phi (H(t))|_{H(t)}^{2}.
\end{equation}
Moreover, if the initial Hermitian metric $H_{0}$ satisfies $\tr (\sqrt{-1}\Lambda_{\omega } F_{H_{0}}-\lambda \cdot \Id_E)=0$, then
\begin{equation}
\tr F_{H(t)} =\tr F_{H_{0}}
\end{equation}  and $\det (H_{0}^{-1}H(t))=1$.
\end{prop}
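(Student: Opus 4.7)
The plan is to use the evolution equation (\ref{evlK}) as the starting point. Since $\lambda\cdot\Id_E$ is parallel, (\ref{evlK}) is equivalent to
$$\pt_t\Phi(H(t)) = -2\sqrt{-1}\Lambda_\omega\,\bp_E\pt_{H(t)}\Phi(H(t)).$$
The first identity (\ref{eqmc01}) is then immediate by tracing: the induced Chern connection on $\End(E)$ is compatible with $\tr$, so $\tr(\bp_E\pt_H\Phi) = \bp\pt\tr\Phi$, and the elementary identity $\pt\bp+\bp\pt=0$ on scalar functions converts this to $\pt_t\tr\Phi = 2\sqrt{-1}\Lambda_\omega\pt\bp\tr\Phi$.

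For (\ref{eqmc}), I would first use that $\Phi$ is $H(t)$-Hermitian to write $|\Phi|^2_{H(t)}=\tr(\Phi^2)$, a scalar depending on $\Phi$ only as an endomorphism. Then $\pt_t|\Phi|^2 = 2\tr(\Phi\,\pt_t\Phi) = -4\sqrt{-1}\Lambda_\omega\tr(\Phi\cdot\bp_E\pt_H\Phi)$. Next I would expand $\pt\bp\tr(\Phi^2)$ by Leibniz, producing a first-order piece $2\tr(\pt_H\Phi\wedge\bp_E\Phi)$ and a second-order piece $2\tr(\Phi\cdot\pt_H\bp_E\Phi)$. To relate the second-order piece to the time derivative, I would invoke the curvature identity $\pt_H\bp_E+\bp_E\pt_H = [F_H,\,\cdot\,]$ on $\End(E)$-valued $0$-forms, together with $\tr(\Phi[F_H,\Phi])=0$ by cyclicity of the trace; this rewrites the second-order piece as $-2\tr(\Phi\cdot\bp_E\pt_H\Phi)$. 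Applying $2\sqrt{-1}\Lambda_\omega$ and subtracting $\pt_t\tr(\Phi^2)$, the $\Phi\cdot\bp_E\pt_H\Phi$ contributions cancel, leaving
$$(2\sqrt{-1}\Lambda_\omega\pt\bp - \pt_t)|\Phi|^2_H \;=\; 4\sqrt{-1}\Lambda_\omega\tr(\pt_H\Phi\wedge\bp_E\Phi).$$
Since $\Phi^{*_H}=\Phi$ gives $\pt_H\Phi = (\bp_E\Phi)^{*_H}$, a pointwise computation in a unitary frame that diagonalizes $\omega$ yields $\sqrt{-1}\Lambda_\omega\tr(\pt_H\Phi\wedge\bp_E\Phi)=|\bp_E\Phi|^2_H=|\pt_H\Phi|^2_H$, so the right-hand side equals $2|D_H\Phi|^2_H$, which is exactly (\ref{eqmc}).

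For the conservation statements, (\ref{eqmc01}) shows that $\tr\Phi$ satisfies a linear parabolic equation on the compact manifold $M$; the initial condition $\tr\Phi(0)\equiv 0$ then propagates by uniqueness (equivalently, by the maximum principle) to $\tr\Phi(t)\equiv 0$. Equivalently and more directly, tracing (\ref{Flow}) gives $\pt_t\log\det H(t) = -2\tr\Phi(t)$, so $\tr\Phi\equiv 0$ is the same as $\det H(t)\equiv\det H_0$, i.e.\ $\det(H_0^{-1}H(t))=1$. Since $\tr F_{H(t)}$ is the Chern curvature of the induced line-bundle metric $\det H(t)$ on $\det E$, which in any local trivialization equals $-\pt\bp\log\det H(t)$, the equality $\det H(t)=\det H_0$ yields $\tr F_{H(t)}=\tr F_{H_0}$. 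I expect the main bookkeeping obstacle to be the derivation of (\ref{eqmc}): one must invoke the $\End(E)$-curvature identity correctly, keep signs consistent when commuting $\pt_H$ past $\bp_E$, and verify that the $[F_H,\Phi]$-contribution drops out under the trace. Once this is done, the remaining steps are algebraic and no K\"ahler or Gauduchon hypothesis on $\omega$ is used.
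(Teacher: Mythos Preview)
Your argument is correct. The paper does not actually give a proof of this proposition; it is quoted from \cite{Z05} without proof, so there is no ``paper's own proof'' to compare against. Your derivation is the standard one: starting from the evolution equation (\ref{evlK}) for $\Phi$, tracing yields (\ref{eqmc01}) immediately, and for (\ref{eqmc}) you correctly use $|\Phi|^2_H=\tr(\Phi^2)$, the Leibniz rule, the identity $\partial_H\bar\partial_E+\bar\partial_E\partial_H=[F_H,\cdot\,]$ on $\End(E)$, and the vanishing of $\tr(\Phi[F_H,\Phi])$ by cyclicity. The identification $4\sqrt{-1}\Lambda_\omega\tr(\partial_H\Phi\wedge\bar\partial_E\Phi)=2|D_H\Phi|^2_H$ via $(\partial_H\Phi)^{*_H}=\bar\partial_E\Phi$ is correct. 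The conservation statements follow as you describe: $\tr\Phi\equiv 0$ by the maximum principle, then $\partial_t\log\det H=-2\tr\Phi\equiv 0$ gives $\det(H_0^{-1}H(t))=1$, and since $\tr F_H$ is the Chern curvature of $\det H$ on $\det E$, equality of determinants gives $\tr F_{H(t)}=\tr F_{H_0}$.
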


Since $\tr \sqrt{-1}\Lambda_\omega F_H=\tr \Phi(H)+r\lambda$ and $|\sqrt{-1}\Lambda_\omega F_H|_H^2=|\Phi(H)|_H^2-2\lambda\tr\Phi(H)+r\lambda^2$, both (\ref{eqmc01}) and (\ref{eqmc}) still hold if we replace $\Phi(H)$ by $\sqrt{-1}\Lambda_\omega F_H$. The maximum principle yields
\begin{equation}\label{bdPhi}
\sup_M|\Phi (H)|_H^2(t)\leq \sup_M|\Phi (H)|_H^2(0)
\end{equation}
and
\begin{equation}\label{bdK}
\sup_M|\sqrt{-1}\Lambda_{\omega}F_{H}|_H^2(t)\leq \sup_M|\sqrt{-1}\Lambda_{\omega}F_{H}|_H^2(0).
\end{equation}

For simplicity, we use the same notation $\bar\partial$ to stand for both the ordinary operator $\bar\partial$  and holomorphic structure $\bar\partial _E$, and set $\theta_H=\sqrt{-1}\Lambda_\omega F_H$. Like in \cite[Chapter VII]{DemAgbook}, we denote by $\tau$ the type $(1,0)$ operator of zero order defined by $\tau=[\Lambda_\omega,\partial\omega]$. One has the fact that
\begin{equation}
[\sqrt{-1}\Lambda_\omega,\bar\partial]=\partial^{*_H}+\tau^*,\qquad [\sqrt{-1}\Lambda_\omega,\partial_H]=-(\bar\partial^{*_H}+\bar\tau^*),
\end{equation}
where $\partial^{*_H}$ and $\bar\partial^{*_H}$ are the formal adjoint operators of $\partial_H$ and $\bar\partial$ with respect to $H$ respectively.

The following lemma is well-known.
\begin{lem}\label{estK}
Assume that $\omega$ is Gauduchon. Let $H(t)$ be a solution of the Hermitian-Yang-Mills flow (\ref{Flow}), then
\begin{equation}\label{intDK}
\int_0^{\infty}\int_M |D_H\theta_H|_H^2(t) \frac{\omega^n}{n!}dt\leq \frac{1}{2}\int_M |\theta_H|_H^2(0) \frac{\omega^n}{n!}
\end{equation}
and
\begin{equation}\label{limDK}
\lim_{t\rightarrow \infty}\int_M |D_H\theta_H|_H^2(t)\frac{\omega^n}{n!}=0.
\end{equation}
\end{lem}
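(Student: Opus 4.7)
The plan is to obtain \eqref{intDK} by integrating the pointwise parabolic identity \eqref{eqmc} over $M$ and exploiting the Gauduchon condition to kill the $\partial\bar\partial$-term, and to obtain \eqref{limDK} by combining \eqref{intDK} with a uniform-continuity argument.

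For the first half, I would start from the observation made just after Proposition \ref{Flow002} that \eqref{eqmc} remains true with $\Phi(H(t))$ replaced by $\theta_H=\sqrt{-1}\Lambda_\omega F_H$, i.e.
\begin{equation*}
\Bigl(2\sqrt{-1}\Lambda_\omega\partial\bar\partial-\frac{\partial}{\partial t}\Bigr)|\theta_H|_H^2=2|D_H\theta_H|_H^2.
\end{equation*}
Integrate against $\omega^n/n!$. For any smooth $f$, two applications of Stokes' theorem give
\begin{equation*}
\int_M\sqrt{-1}\Lambda_\omega\partial\bar\partial f\cdot\frac{\omega^n}{n!}=\int_M\sqrt{-1}\partial\bar\partial f\wedge\frac{\omega^{n-1}}{(n-1)!}=\int_M f\cdot\frac{\sqrt{-1}\partial\bar\partial\omega^{n-1}}{(n-1)!}=0,
\end{equation*}
by the Gauduchon hypothesis. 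Applied to $f=|\theta_H|_H^2$, this reduces \eqref{eqmc} to the ODE
\begin{equation*}
\frac{d}{dt}\int_M|\theta_H|_H^2\frac{\omega^n}{n!}=-2\int_M|D_H\theta_H|_H^2\frac{\omega^n}{n!}.
\end{equation*}
Integrating on $[0,T]$ and letting $T\to\infty$, together with $|\theta_H|_H^2\geq 0$, yields \eqref{intDK}.

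For \eqref{limDK}, set $\Psi(t):=\int_M|D_H\theta_H|_H^2\,\omega^n/n!$. The ODE above shows $\varphi(t):=\int_M|\theta_H|_H^2\,\omega^n/n!$ is non-negative and non-increasing, hence converges, so $\Psi\in L^1([0,\infty))$. To upgrade integrability to pointwise decay, I would show that $\Psi$ is uniformly continuous on $[1,\infty)$; together with $L^1$-integrability this forces $\Psi(t)\to 0$. Uniform continuity follows from a uniform bound on $|\Psi'(t)|$, which in turn reduces to time-uniform $C^2$-estimates for $\theta_H$ on $M\times[1,\infty)$. These estimates come from parabolic regularity applied to the heat-type evolution \eqref{evlK} once the $L^\infty$ bound \eqref{bdK} for $\theta_H$ is in hand. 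The main technical obstacle is establishing these time-uniform higher-derivative bounds in the non-K\"ahler setting, where the commutator formulas acquire extra torsion contributions from $\tau=[\Lambda_\omega,\partial\omega]$; an attractive alternative route is to prove directly that $\Psi$ is non-increasing by deriving a Bochner-type inequality $(2\sqrt{-1}\Lambda_\omega\partial\bar\partial-\partial_t)|D_H\theta_H|_H^2\geq 0$ and integrating it against $\omega^n/n!$ by the same trick as above, after which monotonicity plus $\Psi\in L^1$ gives \eqref{limDK} at once.
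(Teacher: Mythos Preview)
Your derivation of \eqref{intDK} is correct and matches the paper exactly: integrate the parabolic identity for $|\theta_H|_H^2$ against $\omega^n/n!$, use the Gauduchon condition to kill the $\partial\bar\partial$-term, and integrate in time.

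For \eqref{limDK} your two suggested routes diverge from the paper, and one of them has a real gap. Your alternative route---proving the pointwise Bochner inequality $(2\sqrt{-1}\Lambda_\omega\partial\bar\partial-\partial_t)|D_H\theta_H|_H^2\ge 0$ and hence that $\Psi$ is monotone---is not available: when you commute derivatives to compute the evolution of $|\bar\partial\theta_H|_H^2$ you pick up a term $\langle[\bar\partial\theta_H,\theta_H],\bar\partial\theta_H\rangle$ (a bundle-curvature contribution) and, in the non-K\"ahler case, torsion terms from $\tau$, neither of which has a sign. So monotonicity of $\Psi$ is not to be expected. Your first route (uniform $C^2$-bounds via parabolic regularity) is plausible but, as you note, would require honest work in the Hermitian setting and is heavier than necessary.

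The paper's argument is a clean middle ground: rather than monotonicity or full $C^2$-control, it proves the one-sided growth estimate
\[
\frac{d}{dt}\int_M|D_H\theta_H|_{H,\omega}^2\,\frac{\omega^n}{n!}\le C\int_M|D_H\theta_H|_{H,\omega}^2\,\frac{\omega^n}{n!},
\]
with $C$ depending only on $\sup_M|\theta_H|_H(0)$ and $\sup_M|d\omega|_\omega$. This is obtained by differentiating $\int_M|\bar\partial\theta_H|_{H,\omega}^2$, using \eqref{evlK}, the commutator identities $[\sqrt{-1}\Lambda_\omega,\partial_H]=-(\bar\partial^{*_H}+\bar\tau^*)$, the observation $\sqrt{-1}\Lambda_\omega[F_H,\theta_H]=0$, and then Cauchy--Schwarz to absorb the second-order term; the bad curvature and torsion terms are simply bounded by $C|\bar\partial\theta_H|^2$ using \eqref{bdK}. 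Once you have $\Psi'\le C\Psi$ together with $\Psi\in L^1([0,\infty))$ from \eqref{intDK}, the conclusion $\Psi(t)\to 0$ follows by integrating the inequality over $[s,t]\subset[t-1,t]$ to get $\Psi(t)\le e^{C}\int_{t-1}^t\Psi(s)\,ds\to 0$. This avoids any higher-order parabolic estimates.
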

For the convenience of the readers, we give a proof here.
\begin{proof}
As mentioned above, we see
\begin{equation}\label{eqmc000}
\left({\frac{\partial }{\partial t}}-2\sqrt{-1}\Lambda_\omega\pbp\right)|\theta_H|_H^2(t)=-2|D_H \theta_H|_H^2(t).
\end{equation}
Then (\ref{intDK}) follows directly.

In order to show (\ref{limDK}), we need to estimate the growth of $\int_M |D_H\theta_H|_H^2(t) \frac{\omega^n}{n!}$.
Based on (\ref{evlK}),  one can find
\begin{equation}
\begin{split}
\frac{\dif\,}{\dif t}\int_M|\bar\partial \theta_H|^2_{H,\omega}\frac{\omega^n}{n!}
=&2\int_M\langle [\bar\partial \theta_H,\theta_H], \bar\partial\theta_H \rangle_{H,\omega}\frac{\omega^n}{n!}\\
&-4\real\int_M\langle \bar\partial\sqrt{-1}\Lambda_\omega \bar\partial\partial_{H}\theta_H,\bar\partial \theta_H \rangle_{H,\omega}\frac{\omega^n}{n!}.
\end{split}
\end{equation}
For the last integral, we have
\begin{equation}
\quad\int_M\langle \bar\partial\sqrt{-1}\Lambda_\omega \bar\partial\partial_{H}\theta_H, \bar\partial \theta_H \rangle_{H,\omega}\frac{\omega^n}{n!}=\int_M\langle \sqrt{-1}\Lambda_\omega \bar\partial\partial_{H}\theta_H, \bar\partial^{*_H}\bar\partial \theta_H\rangle_{H}\frac{\omega^n}{n!}
\end{equation}
and
\begin{equation}\begin{aligned}
\bar\partial^{*_H}\bar\partial \theta_H &=-([\sqrt{-1}\Lambda_{\omega},\partial_{H}]+\bar\tau^*)\bar\partial \theta_H=-\sqrt{-1}\Lambda_{\omega}\partial_{H}\bar\partial \theta_H-\bar\tau^*\bar\partial \theta_H\\
&=\sqrt{-1}\Lambda_\omega \left(\bar\partial\partial_{H}\theta_H-[F_H,\theta_H]\right)-\bar\tau^*\bar\partial \theta_H\\
&=\sqrt{-1}\Lambda_\omega\bar\partial\partial_{H}\theta_H-\bar\tau^*\bar\partial \theta_H.
\end{aligned}\end{equation}
Then it follows that
\begin{equation}\begin{aligned}
&\quad\real\int_M\langle \bar\partial\sqrt{-1}\Lambda_\omega \bar\partial\partial_{H}\theta_H, \bar\partial \theta_H \rangle_{H,\omega}\frac{\omega^n}{n!}\\
&=\real\int_M\langle \sqrt{-1}\Lambda_\omega\bar\partial\partial_{H}\theta_H, \sqrt{-1}\Lambda_\omega\bar\partial\partial_{H}\theta_H-\bar\tau^*\bar\partial \theta_H \rangle_{H}\frac{\omega^n}{n!}\\
&\geq \int_M|\sqrt{-1}\Lambda_\omega\bar\partial\partial_{H}\theta_H|_H^2 \frac{\omega^n}{n!}-\int_M |\sqrt{-1}\Lambda_\omega\bar\partial\partial_{H}\theta_H|_H |\bar\tau^*\bar\partial \theta_H|_H\frac{\omega^n}{n!}\\
&\geq-\frac{1}{4}\int_M|\bar\tau^*\bar\partial \theta_H|_H^2 \frac{\omega^n}{n!}.
\end{aligned}\end{equation}
Therefore,  there holds
\begin{equation}
\begin{aligned}
\quad\frac{\dif\,}{\dif t}\int_M|\bar\partial \theta_H|^2_{H,\omega}\frac{\omega^n}{n!}&\leq 2\int_M\langle [\bar\partial \theta_H,\theta_H], \bar\partial \theta_H \rangle_{H,\omega}\frac{\omega^n}{n!}+\int_M|\bar\tau^*|^2_{\omega}|\bar\partial \theta_H|_{H, \omega}^2 \frac{\omega^n}{n!}\\
&\leq C\int_M|\bar\partial \theta_H|^2_{H,\omega}\frac{\omega^n}{n!},
\end{aligned}
\end{equation}
where $C$ is a constant depending only on $\sup_M|\theta_H|_H (0)$ and $\sup_M|d\omega|_{\omega}$.
Since $|D_H\theta_H|_{H, \omega}^2=2|\bar\partial \theta_H|_{H, \omega}^2$, we get
\begin{equation}\label{evlDK}
\frac{\dif\,}{\dif t}\int_M|D_H\theta_H|^2_{H,\omega}\frac{\omega^n}{n!}\leq C\int_M|D_H\theta_H|^2_{H,\omega}\frac{\omega^n}{n!}.
\end{equation}
This together with (\ref{intDK}) gives (\ref{limDK}).
\end{proof}

\medskip

For later use, we give the following result on the long-time behavior of weak super-solutions of parabolic equations.

\begin{lem}\label{ss}
Let $u\in\mathcal C(M\times [0,\infty))$ be a function satisfying
\begin{equation}\bigg(\frac{\partial}{\partial t}-2\sqrt{-1}\Lambda_\omega\pbp\bigg) u\geq 0
\end{equation}
in the viscosity sense. Then $\inf_M u(\cdot,t)$ is monotone increasing on $[0,\infty)$.
If  in addition $\omega$ is Gauduchon, then the mean value at time $t$
\begin{equation}
\mu_m(t)\triangleq\frac{1}{\Vol(M,\omega)}\int_M u(\cdot,t)\frac{\omega^n}{n!}
\end{equation}
is monotone increasing on $[0,\infty)$ and
\begin{equation}
\lim_{t\rightarrow\infty}\inf_M u(\cdot,t)=\lim_{t\rightarrow\infty}\mu_m(t).
\end{equation}
\end{lem}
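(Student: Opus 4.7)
The plan is to combine the classical parabolic theory for the linear operator $L = 2\sqrt{-1}\Lambda_\omega\pbp$ with a comparison argument between the weak super-solution $u$ and the smooth solution of the associated heat equation. Since $L$ is a second-order linear elliptic operator with no zero-order term, Calabi's trick realizes it globally as $\Delta_g + X$ for some Riemannian metric $g$ and smooth vector field $X$ on $M$, so the standard tools (viscosity comparison, strong maximum principle, parabolic Harnack inequality) all apply. The monotonicity of $\inf_M u(\cdot,t)$ on $[0,\infty)$ is then immediate: viscosity comparison of $u$ against a constant shows that $\inf_M u(\cdot,t)$ cannot decrease in $t$, and no assumption on $\omega$ is required for this step.

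For the Gauduchon part, I would first observe that the volume form $\omega^n/n!$ is compatible with $L$, in the sense that
$$
\int_M L\varphi\cdot\frac{\omega^n}{n!} = 2\sqrt{-1}\int_M \pbp\varphi\wedge\frac{\omega^{n-1}}{(n-1)!} = 2\sqrt{-1}\int_M \varphi\cdot\frac{\pbp\omega^{n-1}}{(n-1)!} = 0
$$
for every $\varphi\in C^2(M)$, by two integrations by parts together with $\pbp\omega^{n-1}=0$. Consequently $\int_M\varphi\,\omega^n/n!$ is preserved along any solution of $\partial_t\varphi - L\varphi = 0$. Now, given $0\leq t_1<t_2$, let $\varphi\in C(M\times[t_1,t_2])$ be the solution of this parabolic equation with initial datum $u(\cdot,t_1)$; standard parabolic regularity makes $\varphi$ smooth for $t>t_1$. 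Since $u-\varphi$ is a viscosity super-solution of $\partial_t-L$ vanishing at $t=t_1$, comparison yields $u\geq\varphi$ on $M\times[t_1,t_2]$. Combining with mean-conservation for $\varphi$ gives $\mu_m(t_2)\geq\mu_m(t_1)$.

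To identify the two limits, I would apply the parabolic Harnack inequality to the non-negative solution $\phi(x,t) := \varphi(x,t)-\inf_M\varphi(\cdot,t_1)$ on $M\times[t_1,t_2]$ (which is non-negative by the minimum principle since $\inf_M\varphi(\cdot,t)$ is itself monotone in $t$), producing a constant $\delta=\delta(M,L,t_2-t_1)\in(0,1)$ with
$$
\inf_M\varphi(\cdot,t_2)-\inf_M\varphi(\cdot,t_1) \geq \delta\bigl(\sup_M\varphi(\cdot,t_1)-\inf_M\varphi(\cdot,t_1)\bigr) \geq \delta\bigl(\mu_m(t_1)-\inf_M u(\cdot,t_1)\bigr).
$$
Combining with $u\geq\varphi$ and $u(\cdot,t_1)=\varphi(\cdot,t_1)$ yields the weak Harnack-type estimate
$$
\inf_M u(\cdot,t_2)-\inf_M u(\cdot,t_1) \geq \delta\bigl(\mu_m(t_1)-\inf_M u(\cdot,t_1)\bigr).
$$
Fixing $t_2-t_1=1$ and letting $t_1\to\infty$, one distinguishes two cases: if $\inf_M u(\cdot,t)$ remains bounded, the LHS tends to zero, forcing $\mu_m(t)-\inf_M u(\cdot,t)\to 0$; if $\inf_M u(\cdot,t)\to\infty$, then so does $\mu_m(t)\geq\inf_M u(\cdot,t)$. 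Either way the two limits coincide.

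The most delicate step I expect is the viscosity bookkeeping — verifying that $u-\varphi$ is genuinely a viscosity super-solution and that comparison with vanishing initial datum forces $u\geq\varphi$ — together with the claim that the parabolic Harnack constant $\delta$ depends on $t_2-t_1$ but not on $t_1$, which is essential for iterating with unit time steps. Both of these reduce via the representation $L=\Delta_g+X$ to classical Krylov–Safonov-type results on a closed manifold.
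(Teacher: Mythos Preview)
Your approach is essentially the same as the paper's: compare $u$ with the smooth solution $\varphi$ of the associated heat equation, use the Gauduchon condition to see that the mean of $\varphi$ is preserved, and apply the parabolic Harnack inequality to the nonnegative solution $\varphi-\inf_M\varphi(\cdot,t_1)$.

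There is, however, one technical slip in your Harnack step. You write
\[
\inf_M\varphi(\cdot,t_2)-\inf_M\varphi(\cdot,t_1)\ \geq\ \delta\bigl(\sup_M\varphi(\cdot,t_1)-\inf_M\varphi(\cdot,t_1)\bigr)
\]
with $\delta=\delta(M,L,t_2-t_1)$. But $t_1$ is the \emph{initial} time for $\varphi$, and the parabolic Harnack constant blows up as the sup-time approaches the initial time; there is no universal $\delta$ here (think of a narrow bump as initial data on $S^1$: the sup at time $0$ can be made arbitrarily large while the solution at time $1$ stays bounded). The paper avoids this by taking the sup at an \emph{interior} time --- it uses $\sup_M\varphi(\cdot,t_0+1)$ rather than $\sup_M\varphi(\cdot,t_0)$ --- and then invokes the mean-preservation you already established to get
\[
\mu_m(t_1)-\inf_M u(\cdot,t_1)\ =\ \frac{1}{\Vol(M,\omega)}\int_M\bigl(\varphi(\cdot,s)-\inf_M\varphi(\cdot,t_1)\bigr)\frac{\omega^n}{n!}\ \leq\ \sup_M\varphi(\cdot,s)-\inf_M\varphi(\cdot,t_1)
\]
for any $s\in(t_1,t_2]$. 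With this adjustment your argument goes through verbatim and matches the paper's proof.
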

The viscosity super-solution (see e.g. \cite{CIL92}) is a very weak notion that admits the comparison principle, and it coincides with the distribution super-solution in the continuous case (\cite{Lions83,Ish95}). In fact, the weak super-solutions  considered in Section 3 belong to a stronger notion introduced in \cite{Cal58,Dod83}.
\begin{proof}
The monotone increasing property of $\inf_M u(\cdot,t)$ comes directly from the comparison principle for the viscosity super-solution.

Assume that $\omega$ is Gauduchon. Given a $t_0\geq 0$, we suppose $v$ is the continuous solution of the equation
\begin{equation}\left\{\begin{split}
\bigg(\frac{\partial}{\partial t}-2\sqrt{-1}\Lambda_\omega\pbp\bigg) v=0,&\quad \textrm{on }M\times(t_0,\infty),\\
v=u,&\quad \textrm{on }M\times\{t_0\},
\end{split}
\right.
\end{equation}
then $v$ is smooth on $M\times(t_0,\infty)$ and for any $t\geq t_0$,
\begin{equation}
\frac{1}{\Vol(M,\omega)}\int_M v(\cdot,t)\frac{\omega^n}{n!}=\frac{1}{\Vol(M,\omega)}\int_M v(\cdot,t_0)\frac{\omega^n}{n!}=\mu_m(t_0).
\end{equation}
 Clearly $(u-v)$ is a super-solution in the viscosity sense and vanishes on $M\times\{t_0\}$. By the comparison principle, we have $(u-v)\geq 0$ on $M\times [t_0,\infty)$ and consequently for any $t> t_0$,
\begin{equation}
\mu_m(t)\geq \frac{1}{\Vol(M,\omega)}\int_M v(\cdot,t)\frac{\omega^n}{n!}=\mu_m(t_0).
\end{equation}
This implies $\mu_m(t)$ is monotone increasing on $[0,\infty)$.

At the same time, since $\big(v-\inf_Mv(\cdot, t_0)\big)$ is a nonnegative solution, the parabolic Harnack inequality indicates
\begin{equation}
\sup_M v(\cdot,t_0+1)-\inf_M v(\cdot,t_0)\leq C\bigg(\inf_M v(\cdot, t_0+1)-\inf_M v(\cdot,t_0)\bigg),
\end{equation}
where $C\geq 1$ is a constant depending only on $(M,\omega)$. Then
\begin{equation}
0\leq \mu_m(t_0)-\inf_M u(\cdot, t_0)\leq C\bigg(\inf_M u(\cdot, t_0+1)-\inf_M u(\cdot,t_0)\bigg).
\end{equation}
Accordingly we deduce $\lim_{t\rightarrow \infty}\mu_m(t)=\lim_{t\rightarrow \infty}\inf_M u(\cdot,t)$.
\end{proof}

\section{Proofs of Theorem \ref{thm2}, \ref{thmuni} and \ref{thm3}}

Let $(M, \omega )$ be an $n$-dimensional compact Hermitian manifold, $(E, \bar{\partial }_{E})$ a rank $r$ holomorphic vector bundle over $M$ and $H$ a Hermitian metric on $E$. Set
\begin{equation*}
	\Herm(E, H)=\{ \theta\in \Gamma(\End(E)) \;  | \;  \theta^{\ast H}=\theta\},
\end{equation*}	
which is the real vector space of smooth $H$-selfadjoint endomorphisms of  $E$, and define $\Herm^{+}(E, H)\subset \Herm(E, H)$ to be the open subset of  positive definite ones.
As mentioned in Introduction, $\lambda_1(H,\omega)$, $\lambda_2(H,\omega)$, $\cdots$, $\lambda_r(H,\omega)$ are the eigenvalues of $\theta_H=\sqrt{-1}\Lambda_\omega F_H$, sorted in the descending order. For $1 \leq k \leq r$, write
\begin{equation}
\lambda_{L,k}(H,\omega)=\sum_{i=1}^k\lambda_{r-i+1}(H,\omega),\qquad \lambda_{U,k}(H,\omega)=\sum_{i=1}^k\lambda_{i}(H,\omega),
\end{equation}
and
\begin{equation}\hat\lambda_{L,k}(H,\omega)=\inf_M \lambda_{L,k}(H,\omega),\qquad \hat\lambda_{U,k}(H,\omega)=\sup_M \lambda_{U,k}(H,\omega).
\end{equation}
If additionally, $\omega$ is Gauduchon, we set
\begin{equation}\begin{aligned}
&\lambda_{mL,k}(H,\omega)=\frac{1}{\Vol(M,\omega)}\int_M\lambda_{L,k}(H,\omega)\frac{\omega^n}{n!},\\
&\lambda_{mU,k}(H,\omega)=\frac{1}{\Vol(M,\omega)}\int_M\lambda_{U,k}(H,\omega)\frac{\omega^n}{n!}.
\end{aligned}\end{equation}

We start with the convergence of  eigenvalues of the mean curvature along the Hermitian-Yang-Mills flow. Let $H(t)$ be a smooth solution of (\ref{Flow}). We write for short
\begin{equation}
\begin{split}
&\lambda_{L,k}(\cdot,t)=\lambda_{L,k}(H(t),\omega)(\cdot), \qquad\ \  \lambda_{U,k}(\cdot,t)=\lambda_{U,k}(H(t),\omega)(\cdot),\\
&\hat\lambda_{L,k}(t)=\hat\lambda_{L,k}(H(t),\omega), \qquad\qquad\ \hat\lambda_{U,k}(t)=\hat\lambda_{U,k}(H(t),\omega),\\
&\lambda_{mL,k}(t)=\lambda_{mL,k}(H(t),\omega), \qquad\quad \lambda_{mU,k}(t)=\lambda_{mU,k}(H(t),\omega).
\end{split}
\end{equation}

We derive the following generalization of Theorem \ref{thm2}.

\begin{thm}\label{thm2'}
Let $1 \leq k \leq r$. Then $\hat\lambda_{L,k}(t)$ is monotone increasing and $\hat\lambda_{U,k}(t)$ is monotone decreasing on $[0,\infty)$.
If additionally $\omega$ is Gauduchon, then $\lambda_{mL,k}(t)$ is increasing and $\lambda_{mU,k}(t)$ is decreasing. Furthermore, we have
\begin{equation}
\lim\limits_{t\rightarrow \infty}\lambda_{mL,k}(t)=\lim\limits_{t\rightarrow \infty}\hat\lambda_{L,k}(t),\qquad
\lim\limits_{t\rightarrow \infty}\lambda_{mU,k}(t)=\lim\limits_{t\rightarrow \infty}\hat\lambda_{U,k}(t).
\end{equation}
\end{thm}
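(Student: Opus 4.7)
The plan is to recognize $\lambda_{L,k}(\cdot,t)$ and $-\lambda_{U,k}(\cdot,t)$ as viscosity super-solutions of the parabolic inequality
$$\Bigl(\frac{\partial}{\partial t}-2\sqrt{-1}\Lambda_{\omega}\pbp\Bigr)u\geq 0
$$
on $M\times[0,\infty)$, and then quote Lemma \ref{ss}. That lemma directly yields monotonicity of $\hat\lambda_{L,k}(t)$ and $\hat\lambda_{U,k}(t)$; under the Gauduchon assumption, it gives in addition the monotonicity of $\lambda_{mL,k}(t),\lambda_{mU,k}(t)$ and the identification $\lim_{t\to\infty}\lambda_{mL,k}(t)=\lim_{t\to\infty}\hat\lambda_{L,k}(t)$, together with the $U$-analog. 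The underlying intuition is that $\theta\mapsto\lambda_{L,k}(\theta)$ is concave (it is the infimum of the linear functionals $\tr(\theta|_V)$ over $k$-dimensional subspaces $V$), while $\theta\mapsto\lambda_{U,k}(\theta)$ is convex; thus composing with the heat-type evolution of $\theta_H$ produces a super- and a sub-solution, respectively.

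To make this precise for $\lambda_{L,k}$, let $\varphi\in C^2$ touch $\lambda_{L,k}$ from below at an interior point $(p_0,t_0)$. Diagonalize $\theta_{H(t_0)}(p_0)$ via an $H(p_0,t_0)$-orthonormal basis $\{e_i\}_{i=1}^{r}$ of eigenvectors with $\theta_{H}e_i=\lambda_i e_i$, and set $V_0=\mathrm{span}\{e_{r-k+1},\ldots,e_r\}$, the span of the eigenvectors associated with the $k$ smallest eigenvalues. Extend $V_0$ arbitrarily to a smooth rank-$k$ subbundle $V$ on a neighborhood of $p_0$ (for instance via a local holomorphic frame adapted to the $e_i$), let $\pi(p,t)$ denote the $H(p,t)$-orthogonal projection onto $V(p)$, and define the smooth test
$$g(p,t):=\tr\bigl(\theta_{H(t)}(p)\,\pi(p,t)\bigr).
$$
Ky Fan's min-max principle gives $g(p,t)\geq\lambda_{L,k}(p,t)$ near $p_0$, with equality at $(p_0,t_0)$; hence $g-\varphi\geq\lambda_{L,k}-\varphi\geq 0$ near $(p_0,t_0)$ and vanishes there. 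At this interior minimum of $g-\varphi$, elementary calculus gives $\partial_t(g-\varphi)=0$ and $2\sqrt{-1}\Lambda_{\omega}\pbp(g-\varphi)\geq 0$, so
$$\Bigl(\frac{\partial}{\partial t}-2\sqrt{-1}\Lambda_{\omega}\pbp\Bigr)\varphi(p_0,t_0)\geq\Bigl(\frac{\partial}{\partial t}-2\sqrt{-1}\Lambda_{\omega}\pbp\Bigr)g(p_0,t_0).
$$
The problem is thus reduced to the smooth pointwise inequality $(\partial_t-2\sqrt{-1}\Lambda_{\omega}\pbp)g(p_0,t_0)\geq 0$. An entirely parallel construction, with $V_0=\mathrm{span}\{e_1,\ldots,e_k\}$, establishes the corresponding viscosity property for $-\lambda_{U,k}$.

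For the smooth inequality on $g$ I would expand $\partial_t g$ and $2\sqrt{-1}\Lambda_{\omega}\pbp g$ term by term using the Leibniz rule for traces of $\End(E)$-valued forms, substitute the evolution $\partial_t\theta_H=-2\sqrt{-1}\Lambda_{\omega}\bar\partial\partial_H\theta_H$ from (\ref{evlK}), and organize the result by the order of derivatives of $\pi$. The decisive algebraic input is the identity $[\theta_H,\pi]=0$ at $(p_0,t_0)$, which holds because $V_0$ is an invariant subspace of $\theta_H(p_0,t_0)$. This commutator vanishing forces the cancellation of all terms that are linear in $\partial\pi$, $\bar\partial\pi$ and $\partial_t\pi$, after which the remaining second-order contribution assembles, as in the derivation of (\ref{eqmc}), into a nonnegative Bochner-type trace of the schematic form $2\tr\bigl(\pi\,(D_H\theta_H)\wedge\ast_{\omega}(D_H\theta_H)\bigr)$, plus torsion-type corrections that are controlled (and vanish identically in the K\"ahler case).

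The main obstacle is precisely this commutator expansion: $\pi(p,t)$ depends on $H(p,t)$, so differentiating the product $\theta_H\pi$ introduces both Chern-connection torsion of $\omega$ and $H$-dependent contributions from $\pi$, and one must show that the optimality of the subspace choice $V_0$ suffices to kill all indefinite cross-terms. Once this pointwise inequality is in hand, the proof of Theorem \ref{thm2'} is completed by assembling the two viscosity super-solution statements and directly invoking Lemma \ref{ss}.
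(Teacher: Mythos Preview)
Your strategy is exactly the paper's: construct a smooth upper barrier $g\ge\lambda_{L,k}$ touching at $(p_0,t_0)$, show $(\partial_t-2\sqrt{-1}\Lambda_\omega\pbp)g(p_0,t_0)\ge 0$, and invoke Lemma~\ref{ss}. The test function $g=\tr(\theta_H\pi)$ with $\pi$ the $H$-orthogonal projection onto a rank-$k$ subbundle agreeing with the bottom eigenspace at $(p_0,t_0)$ is also precisely what the paper uses (its $f_{L,k}=\sum_{\alpha=r-k+1}^r\langle\theta_H\tilde e_\alpha,\tilde e_\alpha\rangle_H$ is $\tr(\theta_H\pi)$ for a specific such $\pi$).

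The gap is in the pointwise computation you flag as ``the main obstacle''. Your heuristic---$[\theta_H,\pi]=0$ at $(p_0,t_0)$ kills the first-order-in-$\pi$ terms, and the remainder is a nonnegative Bochner term plus torsion corrections---does not go through as stated. Expanding $\partial\bar\partial\tr(\theta_H\pi)$ produces cross-terms of the shape $\tr(\partial\theta_H\cdot\bar\partial\pi)$ and $\tr(\bar\partial\theta_H\cdot\partial\pi)$, which involve \emph{first derivatives of $\theta_H$} paired with first derivatives of $\pi$; the commutator $[\theta_H,\pi]=0$ at the point says nothing about these. With an ``arbitrary'' extension of $V_0$ you have no control on $D\pi$ at $(p_0,t_0)$, so these cross-terms are genuinely indefinite. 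Your expectation of a nonnegative residual $2\tr(\pi\,D_H\theta_H\wedge\ast D_H\theta_H)$ plus torsion corrections is also off: the paper obtains $(\partial_t-2\sqrt{-1}\Lambda_\omega\pbp)f_{L,k}(p_0,t_0)=0$ \emph{exactly}, with no sign argument and no torsion remnants, valid for arbitrary Hermitian $\omega$.

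What makes the computation work is not the choice of $V_0$ alone but the choice of \emph{frame}. The paper picks a holomorphic frame $\{e_\alpha\}$ adapted to the eigenbasis at $(p_0,t_0)$ and satisfying $dH_{\bar\beta\alpha}|_{(p_0,t_0)}=0$; running Gram--Schmidt then gives coefficients $a_\alpha^\beta$ with $a_\alpha^\beta(p_0,t_0)=\delta_{\alpha\beta}$ and $da_\alpha^\beta(p_0,t_0)=0$. In that frame one checks directly that $(\partial_t-2\sqrt{-1}\Lambda_\omega\pbp)H_{\bar\beta\alpha}$ and $(\partial_t-2\sqrt{-1}\Lambda_\omega\pbp)\theta_{\bar\beta\alpha}$ both equal $2\lambda$ times themselves at $(p_0,t_0)$, and a short algebraic manipulation yields $(\partial_t-2\sqrt{-1}\Lambda_\omega\pbp)f_\alpha(p_0,t_0)=0$ for each $\alpha$. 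The normalization $dH=0$ at the point is exactly what kills the problematic cross-terms you were hoping $[\theta_H,\pi]=0$ would handle. Once you adopt this frame choice, the rest of your outline is complete.
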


By  Lemma \ref{ss}, the proof of Theorem \ref{thm2'} can be reduced to prove that
\begin{equation}
({\frac{\partial}{\partial t}}-2\sqrt{-1}\Lambda_\omega \pbp)\lambda_{L,k}\geq 0,\qquad ({\frac{\partial}{\partial t}}-2\sqrt{-1}\Lambda_\omega\pbp)\lambda_{U,k}\leq 0
\end{equation}
in the viscosity sense. Actually, the latter is a simple corollary of the following technical lemma.
\begin{lem}\label{klm1'}
Let $1 \leq k \leq r$.
For any $(p_0,t_0)\in M\times[0,\infty)$, we can find an open neighborhood $U$ of $p_0$, and smooth functions $f_{L,k}, f_{U,k}$ on $U\times[0,\infty)$, such that
\begin{itemize}
\item[1)] $f_{L,k}\geq \lambda_{L,k}$ on $U\times[0,\infty)$ and $f_{L,k}(p_0,t_0)=\lambda_{L,k}(p_0,t_0)$;
\item[2)] $\left(\frac{\partial}{\partial t}-2\sqrt{-1}\Lambda_{\omega}\pbp\right)f_{L,k}(p_0,t_0)=0$;
\item[3)] $f_{U,k}\leq \lambda_{U,k}$ on $U\times[0,\infty)$ and $f_{U,k}(p_0,t_0)=\lambda_{U,k}(p_0,t_0)$;
\item[4)] $\left(\frac{\partial}{\partial t}-2\sqrt{-1}\Lambda_{\omega}\pbp\right)f_{U,k}(p_0,t_0)=0$.
\end{itemize}
\end{lem}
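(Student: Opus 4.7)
My strategy is to construct $f_{U,k}$ directly via the Ky Fan variational characterization of the sum of the top $k$ eigenvalues; the companion $f_{L,k}$ can then be taken as $\tr\theta_H - f_{U,r-k}$, since $\lambda_{L,k}+\lambda_{U,r-k}=\tr\theta_H$ and, by Proposition \ref{Flow002}, $\tr\theta_H$ satisfies $(\partial_t - 2\sqrt{-1}\Lambda_\omega\pbp)\tr\theta_H=0$ identically. For $f_{U,k}$, pick a basis of $E_{p_0}$ that is $H(t_0)(p_0)$-orthonormal and diagonalizes $\theta_{H(t_0)}(p_0)$ with eigenvalues in decreasing order, and extend it to a local holomorphic frame $\{e_1,\ldots,e_r\}$ on a neighborhood $U$ of $p_0$ in which $H(t_0)$ is in normal coordinates at $p_0$ (so that the Chern connection coefficients $\Gamma=H(t_0)^{-1}\partial H(t_0)$ vanish at $p_0$). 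Let $W:=\mathrm{span}(e_1,\ldots,e_k)$ and, for $(p,t)\in U\times[0,\infty)$, let $P(p,t)$ be the $H(t)$-orthogonal projection of $E_p$ onto $W_p$. Define $f_{U,k}(p,t):=\tr(P\,\theta_{H(t)})(p)$; equivalently $f_{U,k}=\sum_{i=1}^{k}H(t)(\theta_{H(t)}v_i,v_i)$ for any $H(t)$-orthonormal basis $(v_1,\ldots,v_k)$ of $W_p$ (obtained, e.g., by Gram--Schmidt of $(e_1,\ldots,e_k)$). The Ky Fan max characterization forces $f_{U,k}\leq \lambda_{U,k}$, and at $(p_0,t_0)$ the subspace $W_{p_0}$ equals the sum of the top $k$ eigenspaces of $\theta_{H(t_0)}(p_0)$, so $f_{U,k}(p_0,t_0)=\lambda_{U,k}(p_0,t_0)$; this gives (3).

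The preliminary identity needed for (4) is that $\theta_H$ itself satisfies the Chern-parabolic heat equation on $\End(E)$: combining (\ref{evlK}) with $(\partial_H\bp+\bp\partial_H)\theta_H=[F_H,\theta_H]$ (arising from $D_H^{2}\Psi=[F_H,\Psi]$ on sections of $\End(E)$) and the tautology $\sqrt{-1}\Lambda_\omega[F_H,\theta_H]=[\theta_H,\theta_H]=0$, one gets $(\partial_t-2\sqrt{-1}\Lambda_\omega\pbp)\theta_H=0$ (with the Chern connection used to define $\pbp$ on $\End(E)$). Expanding $(\partial_t-2\sqrt{-1}\Lambda_\omega\pbp)\tr(P\theta_H)$ via the Leibniz rule yields
\begin{equation*}
\tr\bigl((\partial_t-2\sqrt{-1}\Lambda_\omega\pbp)P\cdot\theta_H\bigr)+\tr\bigl(P\cdot(\partial_t-2\sqrt{-1}\Lambda_\omega\pbp)\theta_H\bigr)+(\text{first-order cross terms in }\partial P,\bp P,\partial\theta_H,\bp\theta_H).
\end{equation*}
The middle term vanishes by the identity just established, so it remains to show that at $(p_0,t_0)$ the first and third contributions also vanish.

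In our chosen frame the Chern connection coefficients of $H(t_0)$ vanish at $p_0$, so covariant derivatives reduce to ordinary derivatives of matrix entries there. Differentiating the orthogonality equations defining $P$ and using $\partial H(t_0)(p_0)=\bp H(t_0)(p_0)=0$ together with the fact that $\dot H(t_0)(p_0)=-2(\theta_H-\lambda\,\Id)(p_0,t_0)$ is diagonal in the $e_\alpha$-basis (hence block-diagonal with respect to $W\oplus W^{\perp}$), I conclude that every first-order partial of the matrix entries of $P$ vanishes at $(p_0,t_0)$; this annihilates all cross terms. For the remaining $\tr((\partial_t-2\sqrt{-1}\Lambda_\omega\pbp)P\cdot\theta_H)$, differentiating $P^{2}=P$ twice and using this first-order vanishing shows that every pure second partial of $P$ at $(p_0,t_0)$ is \emph{off-diagonal} with respect to $W\oplus W^{\perp}$; meanwhile $\theta_H(p_0,t_0)$ is block-diagonal since $W_{p_0}$ is $\theta_{H(t_0)}(p_0)$-invariant. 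Consequently $(\partial_t-2\sqrt{-1}\Lambda_\omega\pbp)P\cdot\theta_H$ is purely off-diagonal at $(p_0,t_0)$ and its trace vanishes.

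The principal obstacle is the last step: one must first annihilate all first-order contributions and then organize the second-order contributions by their $W\oplus W^{\perp}$ block structure. The simultaneous choice of a holomorphic, $H(t_0)$-normal frame diagonalizing $\theta_{H(t_0)}(p_0)$, together with the specific form of $\dot H$ dictated by the Hermitian--Yang--Mills flow (which makes $\dot H(t_0)(p_0)$ diagonal in precisely the right basis), is what makes both cancellations occur at the same point $(p_0,t_0)$. Once this is in place the statements (1)--(4) follow directly, and $f_{L,k}=\tr\theta_H-f_{U,r-k}$ inherits the required properties from $\tr\theta_H$ and $f_{U,r-k}$.
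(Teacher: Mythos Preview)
Your approach is correct and is essentially the paper's proof in a different dress: your barrier $f_{U,k}=\tr(P\,\theta_H)$ coincides with the paper's $\sum_{\alpha=1}^{k}\langle\theta_H\tilde e_\alpha,\tilde e_\alpha\rangle_H$ (Gram--Schmidt of $e_1,\dots,e_k$ gives an $H(t)$-orthonormal basis of $W$, so the two expressions agree), and both use the same holomorphic normal frame diagonalizing $\theta_{H(t_0)}(p_0)$. The paper verifies item (4) by tracking the Gram--Schmidt coefficients $a_\alpha^\beta$ and the identities $(\partial_t-2\sqrt{-1}\Lambda_\omega\pbp)H_{\bar\beta\alpha}=2\lambda H_{\bar\beta\alpha}$, $(\partial_t-2\sqrt{-1}\Lambda_\omega\pbp)\theta_{\bar\beta\alpha}=2\lambda\theta_{\bar\beta\alpha}$ at $(p_0,t_0)$, whereas you organize the same cancellation more conceptually via $P^2=P$ (forcing the block off-diagonality of all second derivatives of $P$ once the first derivatives vanish) together with the endomorphism heat equation for $\theta_H$; your shortcut $f_{L,k}=\tr\theta_H-f_{U,r-k}$ is a clean reduction the paper does not use.
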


\begin{proof}
It suffices to show the construction of $f_{L,k}$, the construction of $f_{U,k}$ is similar.
Certainly one can find an othonormal basis $\{u_{\alpha}\}_{\alpha=1}^r$ of $(E|_{p_0},H(t_0)|_{p_{0}})$ such that
\begin{equation}\langle\sqrt{-1}\Lambda_{\omega}F_{H(t_0)}u_{\alpha},u_{\beta}\rangle_{H(t_0)}(p_0)=\lambda_{\alpha}(H(t_0),\omega)(p_0)\delta_{\alpha\beta}.\end{equation}
Choose a holomorphic frame field $\{e_{\alpha}\}_{\alpha=1}^r$ on some open neighborhood $U$ of $p_0$ such that
\begin{itemize}
\item[1)] $e_{\alpha}(p_0)=u_{\alpha}$ for $1 \leq \alpha \leq r$;
\item[2)] $dH_{\bar\beta\alpha}\big|_{(p_0,t_0)}=0$, where $H_{\bar\beta\alpha}=\langle e_{\alpha},e_{\beta}\rangle_{H(t)}$.
\end{itemize}
Set $\theta_{\bar\beta\alpha}=\langle \sqrt{-1}\Lambda_{\omega}F_H(e_{\alpha}), e_{\beta}\rangle_H$, then we have on $U\times[0,\infty)$
\begin{gather}\frac{\partial H_{\bar\beta\alpha}}{\partial t}=-2(\theta_{\bar\beta\alpha}-\lambda H_{\bar\beta\alpha}),\\
\theta_{\bar\beta\alpha}=\sqrt{-1}\Lambda_{\omega}(\bar\partial\partial H_{\bar\beta\alpha}-\bar\partial H_{\bar\beta\gamma} H^{\gamma\bar\delta} \partial H_{\bar\delta\alpha}),
\end{gather}
where $(H^{\alpha\bar\beta})$ is the inverse matrix of $(H_{\bar\beta\alpha})$.
Because $dH_{\bar \beta \alpha}\big|_{(p_0,t_0)}=0$, it is easy to check
\begin{equation}\label{lcevlH}
(\frac{\partial}{\partial t}-2\sqrt{-1}\Lambda_{\omega}\pbp)H_{\bar\beta\alpha}\big|_{(p_0,t_0)}=2\lambda H_{\bar\beta\alpha}\big|_{(p_0,t_0)},
\end{equation}
\begin{equation}\label{lcevlK}
(\frac{\partial}{\partial t}-2\sqrt{-1}\Lambda_{\omega}\pbp)\theta_{\bar\beta\alpha}\big|_{(p_0,t_0)}=2\lambda \theta_{\bar\beta\alpha}\big|_{(p_0,t_0)}.
\end{equation}

Running the Gram-Schmidt process, we can construct smooth maps $\tilde e_1,\cdots, \tilde{e}_r:U\times[0,\infty)\rightarrow E$,
\begin{equation}
\begin{aligned}
&\tilde e_1=\frac{e_1}{|e_1|_{H}},\\
&\tilde e_2=\frac{e_2-\langle e_2,\tilde e_1\rangle_{H}\tilde e_1}{|e_2-\langle e_2,\tilde e_1\rangle_{H}\tilde e_1|_{H}},\\
&\quad \cdots,\\
&\tilde e_r=\frac{e_r-\langle e_r,\tilde e_1\rangle_{H}\tilde e_1-\cdots-\langle e_r,\tilde e_{r-1}\rangle_{H}\tilde e_{r-1}}{|e_r-\langle e_r,\tilde e_1\rangle_{H}\tilde e_1-\cdots-\langle e_r,\tilde e_{r-1}\rangle_{H}\tilde e_{r-1}|_{H}}.
\end{aligned}
\end{equation}
At any $(p,t)\in M\times[0,\infty)$, $\{\tilde e_1(p,t),\cdots,\tilde e_r(p,t)\}$ forms an orthonormal basis of $(E|_p, H(t)|_p)$. If we write
\begin{equation}\tilde e_{\alpha}=a_{\alpha}^{\beta}e_{\beta},
\end{equation}
then the coefficients $a_{\alpha}^{\beta}$ satisfy
\begin{itemize}
\item[1)] $\overline{a_{\alpha}^{\gamma}}H_{\bar\gamma\delta}a_{\beta}^{\delta}=\delta_{\alpha\beta}$; \item[2)] $a_{\alpha}^{\beta}(p_0,t_0)=\delta_{\alpha\beta}$;
\item[3)] $da_{\alpha}^{\beta}(p_0,t_0)=0$.
\end{itemize}
Define
\begin{equation}f_{L,k}=\sum_{\alpha=r-k+1}^r \langle\theta_H\tilde e_{\alpha},\tilde e_{\alpha}\rangle_H=\sum_{\alpha=r-k+1}^r\overline{a_{\alpha}^{\gamma}}\theta_{\bar\gamma\delta}a_{\alpha}^{\delta}.
\end{equation}
Obviously, $f_{L,k}(p_0,t_0)=\lambda_{L,k}(p_0,t_0)$.

According to the definition of $\lambda_{L,k}$ and the fact that $\theta_H\in \Herm(E, H)$, one has
\begin{equation}\lambda_{L,k}(p,t)=\inf \left\{\sum_{\alpha=1}^k\langle\theta_{H(t)}v_{\alpha},v_{\alpha}\rangle_{H(t)}\bigg| \{v_{\alpha}\}_{\alpha=1}^k\in E|_p \textrm{ is $H(t)$-orthonormal}\right\},
\end{equation}
consequently $f_{L,k}\geq \lambda_{L,k}$ on $U\times[0,\infty)$.

For any $1 \leq \alpha \leq r$, we  write
\begin{equation}f_{\alpha}=\langle\theta_{H}\tilde e_{\alpha},\tilde e_{\alpha}\rangle_H=\overline{a_{\alpha}^{\gamma}}\theta_{\bar\gamma\delta}a_{\alpha}^{\delta}.
\end{equation}
By the properties that $e_{\alpha}$ and $a_{\alpha}^{\beta}$ have, a direct computation yields
\begin{equation}\begin{aligned}
&(\frac{\partial}{\partial t}-2\sqrt{-1}\Lambda_{\omega}\pbp)f_{\alpha}\big|_{(p_0,t_0)}\\
=&(\frac{\partial}{\partial t}-2\sqrt{-1}\Lambda_{\omega}\pbp)\overline{a_{\alpha}^{\gamma}}\theta_{\bar\gamma\delta}a_{\alpha}^{\delta}\big|_{(p_0,t_0)}\\
=&[(\frac{\partial}{\partial t}-2\sqrt{-1}\Lambda_{\omega}\pbp)\theta_{\bar\alpha\alpha}+\theta_{\bar\alpha\alpha}(\frac{\partial}{\partial t}-2\sqrt{-1}\Lambda_{\omega}\pbp)|a_{\alpha}^{\alpha}|^2]\big|_{(p_0,t_0)}.
\end{aligned}
\end{equation}
Similarly,
\begin{equation}\begin{aligned}
0&=(\frac{\partial}{\partial t}-2\sqrt{-1}\Lambda_{\omega}\pbp)\overline{a_{\alpha}^{\gamma}}H_{\bar\gamma\delta}a_{\alpha}^{\delta}\big|_{(p_0,t_0)}\\
&=[(\frac{\partial}{\partial t}-2\sqrt{-1}\Lambda_{\omega}\pbp)H_{\bar\alpha\alpha}+(\frac{\partial}{\partial t}-2\sqrt{-1}\Lambda_{\omega}\pbp)|a_{\alpha}^{\alpha}|^2]\big|_{(p_0,t_0)}.
\end{aligned}
\end{equation}
Together with (\ref{lcevlH}) and (\ref{lcevlK}), one can deduce
\begin{equation}(\frac{\partial}{\partial t}-2\sqrt{-1}\Lambda_{\omega}\pbp)f_{\alpha}\big|_{(p_0,t_0)}=0.
\end{equation}
Since $f_{L,k}$ is a sum of several $f_{\alpha}$, we arrive at  $\left({\textstyle\frac{\partial}{\partial t}}-2\sqrt{-1}\Lambda_{\omega}\pbp\right)f_{L,k}\big|_{(p_0,t_0)}=0$.
\end{proof}

As a simple corollary of Theorem \ref{thm2'}, we have
\begin{cor}\label{lpcvgEK}
There is a constant vector $\vec\mu\in\mathbb R^r$ such that for any $p\in[1,\infty)$
\begin{equation}
\lim_{t\rightarrow\infty}||\vec\lambda(H(t),\omega)-\vec\mu||_{L^p(\omega)}=0.
\end{equation}
\end{cor}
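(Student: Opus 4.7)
The plan is to exploit Theorem \ref{thm2'} to obtain $L^1$ convergence of each partial-sum function $\lambda_{L,k}$ and $\lambda_{U,k}$ to a constant, extract the individual eigenvalues by a telescoping difference, and upgrade to $L^p$ via the uniform bound on the mean curvature. The Gauduchon hypothesis inherited from Theorem \ref{thm2'} is in force throughout.

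First I would fix $1\leq k\leq r$ and introduce the common limit
\[
\mu_{L,k}:=\lim_{t\to\infty}\hat\lambda_{L,k}(t)=\lim_{t\to\infty}\lambda_{mL,k}(t)\in\mathbb R
\]
provided by Theorem \ref{thm2'}. Because $\lambda_{L,k}(\cdot,t)\geq \hat\lambda_{L,k}(t)$ pointwise on $M$,
\[
0\leq \int_M\bigl(\lambda_{L,k}(\cdot,t)-\hat\lambda_{L,k}(t)\bigr)\frac{\omega^n}{n!}=\Vol(M,\omega)\bigl(\lambda_{mL,k}(t)-\hat\lambda_{L,k}(t)\bigr)\xrightarrow{t\to\infty}0,
\]
so $\lambda_{L,k}(\cdot,t)-\hat\lambda_{L,k}(t)\to 0$ in $L^1(\omega^n/n!)$ (nonnegative integrand, vanishing integral), whence $\lambda_{L,k}(\cdot,t)\to\mu_{L,k}$ in $L^1$. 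The mirror argument using the monotone decreasing pair $(\hat\lambda_{U,k},\lambda_{mU,k})$ gives $\lambda_{U,k}(\cdot,t)\to\mu_{U,k}$ in $L^1$.

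Next I would reconstruct the individual eigenvalues by telescoping: with the convention $\lambda_{U,0}\equiv 0$,
\[
\lambda_i(H(t),\omega)=\lambda_{U,i}(\cdot,t)-\lambda_{U,i-1}(\cdot,t),
\]
and setting $\mu_i:=\mu_{U,i}-\mu_{U,i-1}$ yields $\lambda_i(H(t),\omega)\to\mu_i$ in $L^1$ for every $i$. Collecting these into $\vec\mu:=(\mu_1,\ldots,\mu_r)$ gives $\vec\lambda(H(t),\omega)\to\vec\mu$ componentwise, hence in $L^1$.

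Finally, to promote to $L^p$ for arbitrary $p\in[1,\infty)$, I would invoke the maximum-principle estimate (\ref{bdK}), which bounds $|\sqrt{-1}\Lambda_\omega F_{H(t)}|_{H(t)}$, and consequently every eigenvalue $|\lambda_i(H(t),\omega)|$, uniformly in $t$ by its value at $t=0$. Thus $|\vec\lambda(H(t),\omega)-\vec\mu|$ is uniformly bounded in $t$, so the interpolation
\[
\|\vec\lambda(H(t),\omega)-\vec\mu\|_{L^p}^p\leq \|\vec\lambda(H(t),\omega)-\vec\mu\|_{L^\infty}^{p-1}\,\|\vec\lambda(H(t),\omega)-\vec\mu\|_{L^1}
\]
completes the argument. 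The only step requiring attention is the $L^1$ passage, which rests on the nonnegativity of $\lambda_{L,k}-\hat\lambda_{L,k}$ coupled with the equality of the limits of $\hat\lambda_{L,k}$ and $\lambda_{mL,k}$ supplied by Theorem \ref{thm2'}; the rest is formal bookkeeping.
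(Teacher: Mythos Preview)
Your proposal is correct and follows essentially the same approach as the paper: both arguments use Theorem \ref{thm2'} to obtain $L^1$ convergence of the partial sums $\lambda_{U,k}$ to constants (via the nonnegativity of $\hat\lambda_{U,k}-\lambda_{U,k}$ and the coincidence of the limits of $\hat\lambda_{U,k}$ and $\lambda_{mU,k}$), telescope to recover the individual eigenvalues, and then interpolate with the uniform $L^\infty$ bound from (\ref{bdK}) to upgrade to $L^p$. Your treatment of the $\lambda_{L,k}$ side is redundant once you telescope through the $\lambda_{U,k}$, but this does no harm.
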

\begin{proof} For convenience, set $\lambda_{U,0}(t)=0$ and $a_0=0$. For $1\leq k\leq r$, we define
\begin{equation}
a_k=\lim_{t\rightarrow \infty}\lambda_{mU,k}(t),\qquad b_k=a_k-a_{k-1}.
\end{equation}
We intend to show that $\vec\lambda(H(t),\omega)$ converges to $\vec\mu=(b_1,\cdots,b_r)$ in $L^p$ topology. Since $\lambda_{k}(t)=\lambda_{U,k}(t)-\lambda_{U,k-1}(t)$ and $b_k=a_k-a_{k-1}$, we only need to show that $\lambda_{U,k}(t)$ converges to $a_k$ in $L^p$ topology.

Of course $|\lambda_{U,k}(\cdot,t)|\leq C$ for some constant $C>0$. This means that $|a_k|\leq C$. By the definitions of $\hat\lambda_{U,k}(t)$ and $\lambda_{mU,k}(t)$, it holds that
\begin{equation}
||\lambda_{U,k}(\cdot,t)-a_k||_{L^1(M)}\leq (\hat\lambda_{U,k}(t)-\lambda_{mU,k}(t)+|\hat\lambda_{U,k}(t)-a_k|)\Vol(M,\omega).
\end{equation}
On account of Theorem \ref{thm2'}, $\lim_{t\rightarrow\infty}\hat\lambda_{U,k}(\cdot,t)=\lim_{t\rightarrow \infty}\lambda_{mU,k}(t)=a_k$. Then
\begin{equation}\lim_{t\rightarrow\infty}\Vert \lambda_{U,k}(\cdot,t)-a_k \Vert_{L^1(M,\omega)}=0.
\end{equation}
Using $|\lambda_{U,k}(\cdot,t)|\leq C$ again gives
\begin{equation}
||\lambda_{U,k}(\cdot,t)-a_k||_{L^p(M)}^p\leq (2C)^{p-1}||\lambda_{U,k}(\cdot,t)-a_k||_{L^1(M)},
\end{equation}
and accordingly $\lim_{t\rightarrow\infty}\Vert \lambda_{U,k}(\cdot,t)-a_k\Vert_{L^p(M)}=0$.
\end{proof}

Next we will work on the proof of Theorem \ref{thmuni}, that is,
\begin{thm}\label{estK2}  Let $H_i(t)$ ($i=1,2$) be two smooth solutions of the Hermitian-Yang-Mills flow (\ref{Flow}) on $E$ and $\theta_{H_i}(t)=\sqrt{-1}\Lambda_\omega F_{H_i(t)}$. Then we have
\begin{equation}\label{ulimK}
\lim_{t\rightarrow \infty}\int_M |\theta_{H_2}-\theta_{H_1}|^2_{H_1}(t)\frac{\omega^n}{n!}=0.
\end{equation}
\end{thm}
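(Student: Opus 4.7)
The plan is to analyze the relative endomorphism $h(t)=H_1^{-1}(t)H_2(t)$, which is a smooth, positive and $H_1(t)$-self-adjoint section of $\End(E)$. Combining the HYM equations for $H_1$ and $H_2$ gives the evolution
\begin{equation*}
\frac{\partial h}{\partial t}=2\Phi(H_1)h-2h\Phi(H_2),
\end{equation*}
while the second identity of \eqref{id1} rewrites the mean curvature difference in the clean form
\begin{equation*}
\theta_{H_2}-\theta_{H_1}=\sqrt{-1}\Lambda_\omega\bar\partial(h^{-1}\partial_{H_1}h).
\end{equation*}

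The main idea is to exploit a Donaldson-type energy identity together with the $L^2$ decay in Lemma \ref{estK}. Consider the distance-like functional $\sigma(t)=\tr h(t)+\tr h^{-1}(t)-2r\geq 0$. A direct computation from the evolution of $h$ gives the pointwise identity
\begin{equation*}
\frac{\partial}{\partial t}(\tr h+\tr h^{-1})=-2\tr\bigl((h-h^{-1})(\theta_{H_2}-\theta_{H_1})\bigr).
\end{equation*}
Integrating over $M$ against $\omega^n/n!$, substituting the curvature formula above, and integrating by parts (justified on a Gauduchon manifold by $\partial\bar\partial\omega^{n-1}=0$), I would derive a Bochner-type inequality
\begin{equation*}
\frac{d}{dt}\int_M\sigma\,\frac{\omega^n}{n!}\leq -c\int_M|\theta_{H_2}-\theta_{H_1}|^2_{H_1}\frac{\omega^n}{n!}+R(t),
\end{equation*}
for some $c>0$, where $R(t)$ is a residue controlled by a constant multiple of $\|D_{H_1}\theta_{H_1}\|^2_{L^2}+\|D_{H_2}\theta_{H_2}\|^2_{L^2}$ after applying Cauchy--Schwarz together with the uniform $C^0$ bound \eqref{bdK} on $\theta_{H_i}$.

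Integrating in $t$ over $[0,\infty)$, using $\sigma\geq 0$, and applying Lemma \ref{estK} (which yields $\int_0^\infty(\|D_{H_1}\theta_{H_1}\|^2_{L^2}+\|D_{H_2}\theta_{H_2}\|^2_{L^2})\,dt<\infty$), we obtain the space-time summability
\begin{equation*}
\int_0^\infty\!\int_M|\theta_{H_2}(t)-\theta_{H_1}(t)|^2_{H_1(t)}\,\frac{\omega^n}{n!}\,dt<+\infty.
\end{equation*}
To upgrade this time-integrability to the pointwise-in-$t$ limit \eqref{ulimK}, I would differentiate $\int_M|\theta_{H_2}-\theta_{H_1}|^2_{H_1}\,\omega^n/n!$ in $t$ using \eqref{evlK} and $H_1^{-1}\partial_t H_1=-2\Phi(H_1)$, establish a uniform upper bound on the absolute value of this derivative via \eqref{bdK} and Cauchy--Schwarz, and then invoke the elementary fact that any nonnegative integrable function on $[0,\infty)$ with uniformly bounded derivative must tend to zero.

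The main obstacle is the precise derivation of the Bochner-type inequality in the second paragraph. In the K\"ahler case the analogous computation is classical, but in the non-K\"ahler Hermitian setting additional torsion terms arise from $d\omega\neq 0$ through the commutators $[\sqrt{-1}\Lambda_\omega,\bar\partial]=\partial^{*_H}+\tau^*$ and its conjugate. The Gauduchon hypothesis is essential so that these torsion contributions vanish (or can be absorbed into the residue $R(t)$) after integration, and controlling them pointwise requires careful application of \eqref{id1} together with the bounds on $|D_{H_i}\theta_{H_i}|$ from Lemma \ref{estK}.
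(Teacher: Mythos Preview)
Your overall framework is reasonable, but the central Bochner-type inequality in your second step cannot be derived as stated. When you integrate $\tr\bigl((h-h^{-1})(\theta_{H_2}-\theta_{H_1})\bigr)$ and substitute $\theta_{H_2}-\theta_{H_1}=\sqrt{-1}\Lambda_\omega\bar\partial A$ with $A=h^{-1}\partial_{H_1}h$, the integration by parts moves one derivative onto $h-h^{-1}$; since $\partial_{H_1}(h-h^{-1})=hA+Ah^{-1}$, the resulting quadratic term is of the form $|\sigma A|^2_{H_1}+|A\sigma^{-1}|^2_{H_1}$ (where $\sigma^2=h$), i.e.\ a first-order quantity in $h$, not $|\theta_{H_2}-\theta_{H_1}|^2_{H_1}$. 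Equivalently, the paper's computation gives
\[
\Big(\frac{\partial}{\partial t}-2\sqrt{-1}\Lambda_\omega\pbp\Big)\bigl(\tr(h+h^{-1})-2r\bigr)=-2\bigl(|\sigma A|^2_{H_1,\omega}+|A\sigma^{-1}|^2_{H_1,\omega}\bigr),
\]
and after the maximum principle gives a uniform $C^0$ bound on $h$, this yields space-time integrability and then $\lim_{t\to\infty}\int_M|A|^2_{H_1,\omega}=0$. The curvature difference is handled only \emph{afterwards}: writing $\theta_{H_2}-\theta_{H_1}=(\partial^{*_{H_1}}+\tau^*)A$ and integrating by parts once more, one bounds $\int_M|\theta_{H_2}-\theta_{H_1}|^2_{H_1}$ by $\int_M|A|_{H_1,\omega}\bigl(1+|A|_{H_1,\omega}+|\partial_{H_1}\theta_{H_1}|_{H_1,\omega}+|\partial_{H_2}\theta_{H_2}|_{H_2,\omega}\bigr)$, and this tends to zero by the above together with Lemma~\ref{estK}. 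There is no residue $R(t)$ step producing $-c\int|\theta_{H_2}-\theta_{H_1}|^2$ directly.

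Your step 4 is also problematic: differentiating $\int_M|\theta_{H_2}-\theta_{H_1}|^2_{H_1}$ in $t$ brings in $\partial_t\theta_{H_i}=-2\sqrt{-1}\Lambda_\omega\bar\partial\partial_{H_i}\theta_{H_i}$, a second-order expression in $\theta_{H_i}$ that is not controlled by the $C^0$ bound \eqref{bdK}; a uniform bound on this derivative is not available. The paper avoids this issue entirely by working with $|A|^2$ (whose evolution involves only $\partial_{H_i}\theta_{H_i}$, which is $L^2$-integrable in time by Lemma~\ref{estK}) and using a Gronwall-type argument to pass from time-integrability to $\int_M|A|^2\to 0$, then inferring the curvature statement as a consequence.
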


For convenience, set
\begin{equation}
h(t)=H_1^{-1}(t)H_2(t),\qquad A(t)=D_{H_2(t)}-D_{H_1(t)}.
\end{equation}
We also choose $\sigma(t)\in\Herm^+(E, H_1(t))$ to be the unique element satisfying
\begin{equation}
\sigma^{*_{H_1}}(t)\sigma(t)=\sigma^2(t)=h(t).
\end{equation}

To prove Theorem \ref{estK2}, we need the following lemma.
\begin{lem}\label{estA}
Let $C=\sup_M \tr(h+h^{-1})(0)-2r$. Then
\begin{gather}
\label{bdh}
\sup_M \tr(h+h^{-1})(t)-2r\leq C,\\
\label{intA}
\int_0^\infty\int_M|A|^2_{H_1,\omega}(t)\frac{\omega^n}{n!}dt\leq \frac{C(2+C)}{4}\Vol(M,\omega).
\end{gather}
Furthermore, we have
\begin{equation}\label{limA}
\lim_{t\rightarrow \infty}\int_M|A|^2_{H_1,\omega}(t)\frac{\omega^n}{n!}=0.
\end{equation}
\end{lem}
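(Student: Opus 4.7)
The plan is to establish (\ref{bdh}), (\ref{intA}), (\ref{limA}) in sequence, in the same spirit as Lemma \ref{estK}.

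First I would derive a pointwise parabolic inequality for $\tr h+\tr h^{-1}$. Differentiating $h=H_1^{-1}H_2$ along the flow (\ref{Flow}) gives $\partial_t h=2(\theta_{H_1}h-h\theta_{H_2})$, hence $\partial_t\tr h=-2\tr(h(\theta_{H_2}-\theta_{H_1}))$. Using $\partial_{H_1}h=hA$, $F_{H_2}-F_{H_1}=\bar\partial A$, and the Kato-type identity $\sqrt{-1}\Lambda_\omega\tr(\bar\partial h\wedge A)=-|h^{1/2}A|^{2}_{H_1,\omega}$, a direct computation yields
\[
\Bigl(\tfrac{\partial}{\partial t}-2\sqrt{-1}\Lambda_\omega\pbp\Bigr)\tr h=2|h^{1/2}A|^{2}_{H_1,\omega}-4\tr\bigl(h(\theta_{H_2}-\theta_{H_1})\bigr),
\]
and a symmetric expression for $\tr h^{-1}$ obtained via the $H_1\leftrightarrow H_2$ symmetry. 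Combining these and carefully arranging the cross terms, using that $h$ is positive definite with respect to both metrics, should yield
\[
\Bigl(\tfrac{\partial}{\partial t}-2\sqrt{-1}\Lambda_\omega\pbp\Bigr)(\tr h+\tr h^{-1})\leq 0
\]
in the viscosity sense. The viscosity maximum principle (Lemma \ref{ss}) then gives the monotonicity of $\sup_M(\tr h+\tr h^{-1})(\cdot,t)$, which is (\ref{bdh}).

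For (\ref{intA}), I would integrate the pointwise inequality over $M$ against the Gauduchon volume form. The $\sqrt{-1}\Lambda_\omega\pbp$-term drops out, leaving
\[
\frac{d}{dt}\int_M(\tr h+\tr h^{-1})\frac{\omega^n}{n!}\leq -2\int_M\bigl(|h^{1/2}A|^{2}_{H_1,\omega}+|h^{-1/2}A|^{2}_{H_2,\omega}\bigr)\frac{\omega^n}{n!}.
\]
The bound (\ref{bdh}) confines every eigenvalue of $h$ to a compact positive interval depending only on $C$, and a short spectral computation then gives $|h^{1/2}A|^{2}_{H_1,\omega}+|h^{-1/2}A|^{2}_{H_2,\omega}\geq\frac{4}{2+C}|A|^{2}_{H_1,\omega}$ pointwise. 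Integrating in $t$ and using $0\leq\tr(h+h^{-1})-2r\leq C$ produces (\ref{intA}) with the constant $\frac{C(2+C)}{4}$.

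Finally, for (\ref{limA}) I would mimic the argument for (\ref{limDK}) in Lemma \ref{estK}: differentiate $\int_M|A|^{2}_{H_1,\omega}\omega^n/n!$ in $t$, apply the Bochner-type formula for $|A|^{2}$ along the flow, and use the uniform $L^\infty$ bounds on $\theta_{H_1}$ (from (\ref{bdK})) and on $h,h^{-1}$ (from (\ref{bdh})) to deduce an evolution estimate
\[
\frac{d}{dt}\int_M|A|^{2}_{H_1,\omega}\frac{\omega^n}{n!}\leq C'\int_M|A|^{2}_{H_1,\omega}\frac{\omega^n}{n!}.
\]
Combined with the integrability from (\ref{intA}), this precludes $\int_M|A|^{2}$ from persisting away from zero. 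The main obstacle is the first step: the naive computation produces cross terms of the form $\tr\bigl((h^{-1}-h)(\theta_{H_2}-\theta_{H_1})\bigr)$ whose sign is not a priori controlled, and cancelling or absorbing them into the negative $|A|^{2}$-contribution requires delicate use of the Hermitian structure on $h$ (positivity with respect to both $H_1$ and $H_2$) together with the self-adjointness of each $\theta_{H_i}$ with respect to its own metric.
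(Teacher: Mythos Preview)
Your proposal has a genuine gap in the first step, and the ``main obstacle'' you identify is in fact illusory. Once the evolution equation for $h$ is computed correctly, no $\theta$-cross terms survive. Starting from $\partial_t h = 2\theta_{H_1}h - 2h\theta_{H_2}$ and substituting $\theta_{H_2}=\theta_{H_1}+\sqrt{-1}\Lambda_\omega\bar\partial(h^{-1}\partial_{H_1}h)$, one obtains the exact identity
\[
\Bigl(\tfrac{\partial}{\partial t}+2\sqrt{-1}\Lambda_\omega\bar\partial\partial_{H_1}\Bigr)h = 2[\theta_{H_1},h]+2\sqrt{-1}\Lambda_\omega(\bar\partial h\wedge h^{-1}\partial_{H_1}h).
\]
Taking traces (the commutator drops, and $\tr\bar\partial\partial_{H_1}h=-\partial\bar\partial\tr h$) gives
\[
\Bigl(\tfrac{\partial}{\partial t}-2\sqrt{-1}\Lambda_\omega\pbp\Bigr)\tr h = -2|\sigma A|^2_{H_1,\omega},
\]
and the analogous computation for $h^{-1}$ yields $-2|A\sigma^{-1}|^2_{H_1,\omega}$. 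Thus the paper obtains the \emph{equality}
\[
\Bigl(\tfrac{\partial}{\partial t}-2\sqrt{-1}\Lambda_\omega\pbp\Bigr)(\tr(h+h^{-1})-2r)= -2\bigl(|\sigma A|^2_{H_1,\omega}+|A\sigma^{-1}|^2_{H_1,\omega}\bigr),
\]
with a manifestly nonpositive right-hand side. Your displayed formula $2|h^{1/2}A|^2-4\tr(h(\theta_{H_2}-\theta_{H_1}))$ has the wrong sign on the gradient term and retains spurious $\theta$-difference terms; once these are fixed, the ``delicate'' sign analysis you worry about is unnecessary, and the ordinary (not viscosity) maximum principle immediately gives (\ref{bdh}). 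The spectral step you outline for (\ref{intA}) then matches the paper's.

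Your plan for (\ref{limA}) also has a gap: the inequality $\tfrac{d}{dt}\int_M|A|^2\leq C'\int_M|A|^2$ is too strong to expect. Since $\partial_t A=-2\partial_{H_2}\theta_{H_2}+2\partial_{H_1}\theta_{H_1}$, differentiating $|A|^2_{H_1}$ produces terms involving $|\partial_{H_i}\theta_{H_i}|$ that are not bounded by $|A|$. The paper instead derives $f'(t)\leq 8C_1 f(t)+a(t)$ with $f(t)=\int_M|A|^2$ and $a(t)=2\int_M(C_1|\partial_{H_2}\theta_{H_2}|^2_{H_2}+|\partial_{H_1}\theta_{H_1}|^2_{H_1})$; both $f$ and $a$ are $L^1$ in $t$ (the latter by (\ref{intDK}) in Lemma~\ref{estK}, which requires the Gauduchon assumption), and a short Gronwall-type argument on $[t-1,t]$ then forces $f(t)\to 0$.
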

\begin{proof}
The evolution equation of $h(t)$ is
\begin{equation}
(\frac{\partial}{\partial t}+2\sqrt{-1}\Lambda_\omega\bar\partial\partial_{H_1})h(t)=2([\theta_{H_1},h]+\sqrt{-1}\Lambda_\omega (\bar{\partial }h\wedge h^{-1}\partial_{H_1}h)),
\end{equation}
consequently we have
\begin{equation}\begin{aligned}
&\quad(\frac{\partial}{\partial t}-2\sqrt{-1}\Lambda_\omega \pbp)\tr(h +h^{-1})\\
&=2 \sqrt{-1}\Lambda_\omega\tr(\bar\partial h\wedge h^{-1}\partial_{H_1}h-h^{-1}\partial_{H_1}h\wedge h^{-1}\bar\partial hh^{-1} ).
\end{aligned}
\end{equation}
Notice that $A=h^{-1}\partial_{H_1}h$, $\bar\partial h h^{-1}=A^{*_{H_1}}$ and $\sigma^2=h$. One can find that
\begin{equation}\begin{aligned}
-\sqrt{-1}\Lambda_{\omega}\tr(\bar\partial h\wedge h^{-1}\partial_{H_1}h)&=-\sqrt{-1}\Lambda_\omega\tr((\sigma A)^{*_{H_1}}\wedge(\sigma A))\\
&=|\sigma A|^2_{H_1,\omega}
\end{aligned}\end{equation}
and
\begin{equation}\begin{aligned}
&\sqrt{-1}\Lambda_{\omega}\tr(h^{-1}\partial_{H_1}h\wedge h^{-1}\bar\partial hh^{-1})\\= & \sqrt{-1}\Lambda_\omega\tr((A\sigma^{-1})\wedge(A\sigma^{-1})^{*_{H_1}})\\
= & |A\sigma^{-1}|^2_{H_1,\omega}.
\end{aligned}\end{equation}
Thus
\begin{equation}\label{evlh'}
(\frac{\partial}{\partial t}-2\sqrt{-1}\Lambda_\omega \pbp)(\tr(h+ h^{-1}) -2r)= -2\left(|\sigma A|^2_{H_1,\omega}+|A\sigma^{-1}|^2_{H_1,\omega}\right).
\end{equation}
The parabolic maximal principle tells us that
\begin{equation}
\sup_M\tr(h+h^{-1})-2r\leq \sup_M \tr(h+h^{-1})(0)-2r=C.
\end{equation}
This means
\begin{equation}
\frac{1}{2+C}H_1\leq H_2\leq (2+C)H_1,
\end{equation}
and
\begin{equation}
\frac{1}{2+C}\Id_E\leq h\leq (2+C)\Id_E,\qquad \frac{1}{\sqrt{2+C}}\Id_E\leq \sigma\leq \sqrt{2+C}\Id_E
\end{equation}
in $\Herm^+(E, H_1)$. Then
\begin{equation}
(\frac{\partial}{\partial t}-2\sqrt{-1}\Lambda_\omega \pbp)\left(\tr(h+ h^{-1}) -2r\right)\leq -\frac{4}{2+C}|A|^2_{H_1,\omega},
\end{equation}
which implies (\ref{intA}),
\begin{equation}\begin{aligned}
\int_0^\infty\int_M|A|^2_{H_1,\omega}(t)\frac{\omega^n}{n!}dt &\leq \frac{2+C}{4}\int_M\left(\tr(h+ h^{-1}) -2r\right)(0)\frac{\omega^n}{n!}\\
&\leq \frac{C(2+C)}{4}\Vol(M,\omega).
\end{aligned}\end{equation}

Write $C_1=2+C+\sup_M\left(|\theta_{H_1}|_{H_1}+|\theta_{H_2}|_{H_2}\right)(0)$, then we always have
\begin{equation}
2+\tr(h+h^{-1})(t)-2r+|\theta_{H_1}|_{H_1}(t)+|\theta_{H_2}|_{H_2}(t)\leq C_1.
\end{equation}
The evolution equation of $A(t)$ is
\begin{equation}
\frac{\partial A(t)}{\partial t}=-2\partial_{H_2}\theta_{H_2}(t)+2\partial_{H_1}\theta_{H_1}(t),
\end{equation}
so
\begin{equation}\begin{aligned}
\left|\frac{\partial A(t)}{\partial t}\right|_{H_1,\omega} &\leq 2|\partial_{H_2}\theta_{H_2}|_{H_1,\omega}(t)+2|\partial_{H_1}\theta_{H_1}|_{H_1,\omega}(t)\\
&\leq 2C_1|\partial_{H_2}\theta_{H_2}|_{H_2,\omega}(t)+2|\partial_{H_1}\theta_{H_1}|_{H_1,\omega}(t).
\end{aligned}\end{equation}
This yields that
\begin{equation}\begin{aligned}
\frac{\partial}{\partial t}|A|^2_{H_1,\omega}(t)&=2\langle[A,\theta_{H_1}], A\rangle_{H_1,\omega}+2\real\Big\langle \frac{\partial A}{\partial t},A \Big\rangle_{H_1,\omega}\\
&\leq 4C_1|A|^2_{H_1,\omega}+4C_1|A|_{H_1,\omega}|\partial_{H_2}\theta_{H_2}|_{H_2,\omega}\\
&\quad+4|A|_{H_1,\omega}|\partial_{H_1}\theta_{H_1}|_{H_1,\omega}\\
&\leq 8C_1|A|^2_{H_1,\omega}+2(C_1|\partial_{H_2}\theta_{H_2}|^2_{H_2,\omega}+|\partial_{H_1}\theta_{H_1}|^2_{H_1,\omega}).
\end{aligned}\end{equation}
Consider
\begin{equation}
f(t)=\int_M |A|^2_{H_1,\omega}(t)\frac{\omega^n}{n!}
\end{equation}
and
\begin{equation}
a(t)=2\int_M(C_1|\partial_{H_2}\theta_{H_2}|^2_{H_2,\omega}+|\partial_{H_1}\theta_{H_1}|^2_{H_1,\omega})(t)\frac{\omega^n}{n!}.
\end{equation}
There holds that
\begin{equation}\label{ulcw}
\lim_{t\rightarrow\infty}\int_{t-1}^{t}(f(s)+a(s))\dif s=0
\end{equation}
and
\begin{equation}
f'(t)\leq 8C_1f(t)+a(t).
\end{equation}
For any fixed $t_0\in[0,\infty)$,  we have
\begin{equation}
(e^{-8C_1(t-t_0)}f)'(t)\leq a(t),  \qquad \text{for} \ t\geq t_0.
\end{equation}
Then on $[t_0, +\infty)$,
\begin{equation}
f(t)\leq e^{8C_1(t-t_0)}\left(f(t_0)+\int_{t_0}^{t}a(s)\dif s\right).
\end{equation}
Now assume that $t\in[1,\infty)$. It follows that for any $s\in[t-1,t]$,
\begin{equation}
f(t)\leq e^{8C_1}\left(f(s)+\int_{t-1}^ta(x)\dif x\right).
\end{equation}
Integrating the right hand side with respect to $s$ on $[t-1,t]$, one can get
\begin{equation}
f(t)\leq e^{8C_1}\int_{t-1}^t(f(s)+a(s))\dif s.
\end{equation}
Combined with (\ref{ulcw}), this gives the desired equality (\ref{limA}).
\end{proof}

\begin{proof}[Proof of Theorem \ref{estK2}]
Obviously it holds that
\begin{equation}
\theta_{H_2}-\theta_{H_1}=\sqrt{-1}\Lambda_{\omega}\bar\partial A=[\sqrt{-1}\Lambda_{\omega},\bar\partial]A=(\partial^{*_{H_1}}+\tau^*)A.
\end{equation}
A simple calculation implies
\begin{equation}\begin{aligned}
&\quad\int_M|\theta_{H_2}-\theta_{H_1}|^2_{H_1}\frac{\omega^n}{n!}\\
&=\int_M\langle (\partial^{*_{H_1}}+\tau^*)A,\theta_{H_2}-\theta_{H_1}\rangle_{H_1}\frac{\omega^n}{n!}\\
&=\int_M\langle A,(\partial_{H_1}+\tau)(\theta_{H_2}-\theta_{H_1})\rangle_{H_1,\omega}\frac{\omega^n}{n!}\\
&=\int_M\langle A,\tau (\theta_{H_2}-\theta_{H_1})\rangle_{H_1,\omega}\frac{\omega^n}{n!}
+\int_M\langle A,\partial_{H_2}\theta_{H_2}\rangle_{H_1,\omega}\frac{\omega^n}{n!}\\
&\quad-\int_M\langle A,[A,\theta_{H_2}]\rangle_{H_1,\omega}\frac{\omega^n}{n!}
-\int_M\langle A,\partial_{H_1}\theta_{H_1}\rangle_{H_1,\omega}\frac{\omega^n}{n!}.
\end{aligned}
\end{equation}
Clearly there is a constant $C$ depending only on $\omega$, $H_1(0)$ and $H_2(0)$ such that
\begin{equation}
\tr(h+h^{-1})+ |\tau|_{\omega}+|\theta_{H_1}|_{H_1}+|\theta_{H_2}|_{H_2}\leq C.
\end{equation}
Then we can find a constant $C_1$ depending only on $C$, such that
\begin{equation}\begin{aligned}
&\quad\int_M|\theta_{H_2}-\theta_{H_1}|^2_{H_1}\frac{\omega^n}{n!}\\
&\leq C_1\int_M|A|_{H_1,\omega}\left(1+|A|_{H_1,\omega}+|\partial_{H_1}\theta_{H_1}|_{H_1,\omega}+|\partial_{H_2}\theta_{H_2}|_{H_2,\omega}\right)\frac{\omega^n}{n!}.
\end{aligned}
\end{equation}
On account of (\ref{limDK}) and (\ref{limA}), the right hand term converges to $0$ when $t\rightarrow\infty$, which concludes the proof.
\end{proof}

Point-wisely by choosing an orthonormal basis with respect to $H_1$, we can treat $\sigma$, $\theta_{H_2}$ and $\theta_{H_1}$ as matrices. Since $\sigma\theta_{H_2}\sigma^{-1}$ and $\theta_{H_1}$ are Hermitian, one can find unitary matrices $U_1$ and $U_2$, such that $U_1\sigma\theta_{H_2}\sigma^{-1}U_1^{-1}$ and $U_2\theta_{H_1}U_2^{-1}$ are real diagonal matrices. According to \cite[Theorem VIII.3.9]{Bha97}, we know
\begin{equation*}
|\vec\lambda(H_2)-\vec\lambda(H_1)|^2\leq \cond(U_1\sigma)\cond(U_2)|\theta_{H_2}-\theta_{H_1}|^2_{H_1},
\end{equation*}
where  $\cond(B)$ is the condition number of a nonsingular matrix $B$.
Because $U_1$ and $U_2$ are unitary, one has $\cond(U_1\sigma)=\cond(\sigma)$ and $\cond(U_2)=1$. Thus
\begin{equation}\label{estEK}
|\vec\lambda(H_2)-\vec\lambda(H_1)|^2\leq \cond(\sigma)|\theta_{H_2}-\theta_{H_1}|^2_{H_1}.
\end{equation}
Moreover, $\cond (\sigma)$ is equal to the quotient of the largest eigenvalue by the  smallest eigenvalue of $\sigma$. We observe
\begin{equation}
\cond(\sigma)\leq \frac{1}{2}\tr(h+h^{-1})\leq \frac{1}{2}\sup_M\tr(h+h^{-1})(0).
\end{equation}

From Theorem \ref{estK2}, we immediately have
\begin{lem}\label{lem3'}
Let $H_1(t)$ and $H_2(t)$ be two smooth solutions of the Hermitian-Yang-Mills flow (\ref{Flow}). Then
\begin{equation}
\label{lm0}
\lim_{t\rightarrow\infty}\int_M\big|\vec\lambda(H_1(t),\omega)-\vec\lambda(H_2(t),\omega)\big|^2\frac{\omega^n}{n!}=0.
\end{equation}
\end{lem}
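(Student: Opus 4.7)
The plan is to combine the pointwise eigenvalue comparison estimate (\ref{estEK}) with the uniform control on the condition number of $\sigma$ and the $L^2$-decay of $\theta_{H_2}-\theta_{H_1}$ furnished by Theorem \ref{estK2}. More concretely, I would integrate (\ref{estEK}) over $M$ with respect to the volume form, pull out the $\sup_M\cond(\sigma(t))$ factor from the integral, and then take $t\to\infty$.

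First I would justify that $\cond(\sigma(t))$ is uniformly bounded in both the spatial and time variables. Since $\sigma(t)\in\Herm^+(E,H_1(t))$ satisfies $\sigma^2=h=H_1^{-1}H_2$, its eigenvalues coincide with the square roots of the eigenvalues of $h$, so
\begin{equation}
\cond(\sigma(t))\leq \tfrac{1}{2}\tr(h(t)+h^{-1}(t))
\end{equation}
pointwise, as noted in the paragraph preceding the lemma. Invoking the parabolic maximum principle estimate (\ref{bdh}) from Lemma \ref{estA}, one has $\sup_M \tr(h(t)+h^{-1}(t))\leq 2r+C$ for all $t\geq 0$, where $C$ depends only on the initial data $H_1(0)$ and $H_2(0)$. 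Hence there is a constant $C_0=C_0(H_1(0),H_2(0))$ such that $\sup_M\cond(\sigma(t))\leq C_0$ for every $t\in[0,\infty)$.

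Next I would integrate (\ref{estEK}) to obtain
\begin{equation}
\int_M\bigl|\vec\lambda(H_1(t),\omega)-\vec\lambda(H_2(t),\omega)\bigr|^2\frac{\omega^n}{n!}
\leq C_0\int_M |\theta_{H_2}(t)-\theta_{H_1}(t)|^2_{H_1(t)}\frac{\omega^n}{n!}.
\end{equation}
Applying Theorem \ref{estK2} (i.e.\ (\ref{ulimK})), the right-hand side tends to $0$ as $t\to\infty$, which yields (\ref{lm0}).

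There is no serious obstacle here: the pointwise inequality (\ref{estEK}) is already established just above via the Bhatia-type eigenvalue perturbation bound, the $L^\infty$ control of $\cond(\sigma)$ is immediate from the maximum principle bound on $\tr(h+h^{-1})$, and the $L^2$-decay of the mean curvature difference is precisely Theorem \ref{estK2}. The only point that requires a moment of care is to verify that the constant coming from the $\cond(\sigma)$ factor truly does not depend on $t$, which is what (\ref{bdh}) guarantees.
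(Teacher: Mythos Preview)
Your proposal is correct and is exactly the argument the paper has in mind: the paper establishes the pointwise inequality (\ref{estEK}) and the uniform bound $\cond(\sigma)\leq\frac{1}{2}\sup_M\tr(h+h^{-1})(0)$ in the paragraph immediately preceding the lemma, and then simply writes ``From Theorem \ref{estK2}, we immediately have'' the result. You have spelled out precisely the steps that the paper compresses into that one line.
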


Now we are able to prove Theorem \ref{thm3}. According to  Corollary \ref{lpcvgEK}, it is enough to show that
\begin{thm}\label{thm3'}
Let $H(t)$ be a smooth solution of the Hermitian-Yang-Mills flow (\ref{Flow}). Then for $1 \leq k \leq r$,  we have
\begin{align}
\label{limmL0}
&\lim_{t\rightarrow\infty}\lambda_{mL,k}(H(t),\omega)= \frac{2\pi}{\Vol(M,\omega)}\sum_{i=1}^k\mu_{r+1-i,\omega}(E),\\
\label{limmU0}
&\lim_{t\rightarrow\infty}\lambda_{mU,k}(H(t),\omega)= \frac{2\pi}{\Vol(M,\omega)}\sum_{i=1}^k\mu_{i,\omega}(E).
\end{align}
\end{thm}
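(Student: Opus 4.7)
The plan is to reduce (\ref{limmL0}) and (\ref{limmU0}) to the single identity $A_k:=\lim_{t\to\infty}\lambda_{mU,k}(H(t),\omega)=B_k$ for all $1\leq k\leq r$, where $B_k:=\frac{2\pi}{\Vol(M,\omega)}\sum_{j=1}^k\mu_{j,\omega}(E)$. The reduction uses the pointwise identity $\lambda_{L,k}+\lambda_{U,r-k}=\tr(\sqrt{-1}\Lambda_\omega F_H)$ together with the Chern-Weil trace formula $\int_M\tr(\sqrt{-1}\Lambda_\omega F_H)\frac{\omega^n}{n!}=2\pi\deg_\omega(E)=2\pi\sum_{j=1}^r\mu_{j,\omega}(E)$ (valid since $\omega$ is Gauduchon), which converts (\ref{limmL0}) at index $k$ into (\ref{limmU0}) at index $r-k$. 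By Corollary \ref{lpcvgEK} and Lemma \ref{lem3'}, the limit $A_k$ exists and is independent of the initial data.

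For the upper bound $A_k\leq B_k$, I would apply Theorem \ref{theorem1} to choose metrics $H_{\varepsilon_i}$ with $\sqrt{-1}\Lambda_\omega F_{H_{\varepsilon_i}}\to\frac{2\pi}{\Vol(M,\omega)}\Phi^{HN}_\omega(E,K)$ in $L^p(K)$. Since $\Phi^{HN}_\omega(E,K)$ is $K$-Hermitian with constant sorted eigenvalues $\vec\mu_\omega(E)$, a Bauer-Fike/Weyl-type perturbation estimate (mirroring the derivation of (\ref{estEK})) promotes this operator convergence to $\vec\lambda(H_{\varepsilon_i},\omega)\to\frac{2\pi}{\Vol(M,\omega)}\vec\mu_\omega(E)$ in $L^p$, hence $\lambda_{mU,k}(H_{\varepsilon_i},\omega)\to B_k$. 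Launching the Hermitian-Yang-Mills flow with initial datum $H_{\varepsilon_i}$, the decreasing monotonicity from Theorem \ref{thm2'} gives $A_k\leq\lambda_{mU,k}(H_{\varepsilon_i},\omega)$ (the long-time limit being still $A_k$ by Lemma \ref{lem3'}), and letting $i\to\infty$ yields $A_k\leq B_k$.

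For the matching lower bound at the Harder-Narasimhan ranks, I would invoke a Chern-Weil identity for a saturated subsheaf $\mathcal F\subset E$ of rank $s$ with orthogonal projection $\pi^H_{\mathcal F}\in L^2_1$, which in the Gauduchon setting reads
\begin{equation*}
2\pi\deg_\omega(\mathcal F)=\int_M\tr\bigl(\pi^H_{\mathcal F}\,\sqrt{-1}\Lambda_\omega F_H\bigr)\frac{\omega^n}{n!}-\int_M|\bar\partial\pi^H_{\mathcal F}|^2_H\frac{\omega^n}{n!}.
\end{equation*}
Dropping the non-negative second-fundamental-form term and applying the Courant-Fischer principle pointwise yields $2\pi\deg_\omega(\mathcal F)\leq\Vol(M,\omega)\cdot\lambda_{mU,s}(H,\omega)$. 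Evaluating along the flow $H(t)$ and sending $t\to\infty$ gives $2\pi\deg_\omega(\mathcal F)\leq\Vol(M,\omega)\cdot A_s$. Specializing to $\mathcal F=\mathcal E_\alpha$ and using $B_{\rank\mathcal E_\alpha}=\frac{2\pi}{\Vol(M,\omega)}\deg_\omega(\mathcal E_\alpha)$, we obtain $A_{\rank\mathcal E_\alpha}\geq B_{\rank\mathcal E_\alpha}$, and hence equality at every HN rank.

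To close the gap at non-HN indices, note that $k\mapsto A_k$ is concave because the $a_j$ are nonincreasing, while $k\mapsto B_k$ is piecewise linear with breakpoints precisely at $\rank\mathcal E_\alpha$ (the slope on the $\alpha$-th segment being $\frac{2\pi}{\Vol(M,\omega)}\mu_\omega(\mathcal Q_\alpha)$). On each interval $[\rank\mathcal E_{\alpha-1},\rank\mathcal E_\alpha]$ we already have $A\leq B$ and $A=B$ at both endpoints, so concavity of $A$ together with linearity of $B$ forces $A=B$ throughout. Hence $A_k=B_k$ for every $k$, which proves the theorem. The main obstacle is the careful derivation of the Chern-Weil/second-fundamental-form identity for saturated subsheaves on a compact Gauduchon (non-K\"ahler) manifold when $\pi^H_{\mathcal F}$ is only $L^2_1$-regular; a secondary technical point is converting the $L^p(K)$-convergence of the endomorphisms $\sqrt{-1}\Lambda_\omega F_{H_{\varepsilon_i}}$ into $L^p$-convergence of the sorted eigenvalue vector without a uniform $C^0$ bound on $H_{\varepsilon_i}$ as $\varepsilon_i\to 0$.
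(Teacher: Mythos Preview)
Your approach is essentially the paper's: the lower bound $A_k\geq B_k$ via the Chern--Weil formula for subsheaves is exactly Lemma~\ref{eigest1} (the paper's induction on $k$ is nothing but your concavity interpolation between HN ranks, written out step by step), and the upper bound $A_k\leq B_k$ via Theorem~\ref{theorem1} plus the monotonicity of $\lambda_{mU,k}$ is Lemma~\ref{lem1'}; Lemma~\ref{lem3'} then makes the limit independent of the initial metric. Your trace reduction of (\ref{limmL0}) to (\ref{limmU0}) is a pleasant simplification the paper does not exploit.

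Both of your technical worries dissolve. The Chern--Weil identity you need on a Gauduchon manifold for an $L^2_1$ weakly holomorphic projection is precisely formula (\ref{B1}), due to Bruasse \cite{Bru}, and is quoted verbatim in the paper. For the eigenvalue perturbation, observe from the perturbed equation (\ref{eq}) that
\[
\sqrt{-1}\Lambda_\omega F_{H_{\varepsilon_i}}=\lambda\,\Id_E-\varepsilon_i\log(K^{-1}H_{\varepsilon_i}),
\]
and since $\log(K^{-1}H_{\varepsilon_i})$ is $K$-self-adjoint, $\sqrt{-1}\Lambda_\omega F_{H_{\varepsilon_i}}$ is $K$-Hermitian (not only $H_{\varepsilon_i}$-Hermitian). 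Hence Weyl's perturbation inequality applies with no condition-number factor, and the $L^p(K)$ convergence of endomorphisms in Theorem~\ref{theorem1} passes directly to $L^p$ convergence of sorted eigenvalue vectors; no $C^0$ control on $K^{-1}H_{\varepsilon_i}$ is needed.
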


Once the following lemma is proved, the ``$\leq$" part of (\ref{limmL0}) and ``$\geq$" part of (\ref{limmU0}) will  come.
\begin{lem}\label{eigest1}
Let $H$ be a smooth Hermitian metric on $E$. Then for $1 \leq k \leq r$, we have
\begin{align}
\label{mL1}
&\lambda_{mL,k}(H,\omega)\leq \frac{2\pi}{\Vol(M,\omega)}\sum_{i=1}^k\mu_{r+1-i,\omega}(E),\\
\label{mU1}
&\lambda_{mU,k}(H,\omega)\geq \frac{2\pi}{\Vol(M,\omega)}\sum_{i=1}^k\mu_{i,\omega}(E).
\end{align}
\end{lem}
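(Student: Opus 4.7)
The plan is to prove the upper-eigenvalue bound (\ref{mU1}) first, and then to derive the lower-eigenvalue bound (\ref{mL1}) from it for free. The key identity is
\begin{equation}
\lambda_{mL,k}(H,\omega) + \lambda_{mU,r-k}(H,\omega) = \frac{2\pi\deg_\omega(E)}{\Vol(M,\omega)},
\end{equation}
which holds because $\lambda_{L,k}(H,\omega) = \tr\theta_H - \lambda_{U,r-k}(H,\omega)$ pointwise and $\int_M\tr\theta_H\,\frac{\omega^n}{n!} = 2\pi\deg_\omega(E)$; this is mirrored by $\sum_{i=1}^k\mu_{r+1-i,\omega}(E) + \sum_{i=1}^{r-k}\mu_{i,\omega}(E) = \deg_\omega(E)$. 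So (\ref{mL1}) for $k$ is equivalent to (\ref{mU1}) for $r-k$.

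For (\ref{mU1}), I first handle the Harder-Narasimhan ranks $k = r_\alpha := \rank(\mathcal{E}_\alpha)$. Let $\pi_\alpha^H$ be the $H$-orthogonal weakly holomorphic projection onto $\mathcal{E}_\alpha$; it is an $L^2_1$-bounded Hermitian endomorphism that is a rank-$r_\alpha$ orthogonal projection at a.e.\ point. The Chern-Weil formula for saturated subsheaves on Gauduchon manifolds (as used in \cite{LiZZ,LT,Bru}) reads
\begin{equation}
2\pi\deg_\omega(\mathcal{E}_\alpha) = \int_M \tr(\pi_\alpha^H\theta_H)\,\frac{\omega^n}{n!} - \int_M |\bar\partial\pi_\alpha^H|_H^2\,\frac{\omega^n}{n!}.
\end{equation}
Dropping the non-negative second fundamental form term and using the pointwise Ky Fan inequality $\tr(\pi_\alpha^H\theta_H)(p) \leq \lambda_{U,r_\alpha}(H,\omega)(p)$ (the sum of the top $r_\alpha$ eigenvalues is the maximum of $\tr(\pi A)$ over rank-$r_\alpha$ orthogonal projections $\pi$) yields after integration
\begin{equation}
\lambda_{mU,r_\alpha}(H,\omega) \geq \frac{2\pi}{\Vol(M,\omega)}\deg_\omega(\mathcal{E}_\alpha) = \frac{2\pi}{\Vol(M,\omega)}\sum_{i=1}^{r_\alpha}\mu_{i,\omega}(E),
\end{equation}
which is (\ref{mU1}) at the HN-rank $k=r_\alpha$.

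To cover an intermediate rank $r_{\alpha-1} < k < r_\alpha$, set $j = k - r_{\alpha-1}$ and $q_\alpha = r_\alpha - r_{\alpha-1}$. Because $\lambda_{r_{\alpha-1}+1}(p)\geq\cdots\geq\lambda_{r_\alpha}(p)$, the partial average of the first $j$ terms dominates the full average, giving the elementary pointwise inequality
\begin{equation}
\lambda_{U,k}(H,\omega) - \lambda_{U,r_{\alpha-1}}(H,\omega) \geq \tfrac{j}{q_\alpha}\bigl(\lambda_{U,r_\alpha}(H,\omega) - \lambda_{U,r_{\alpha-1}}(H,\omega)\bigr).
\end{equation}
Integrating and combining with the two HN-rank bounds produces
\begin{equation}
\lambda_{mU,k}(H,\omega) \geq \bigl(1-\tfrac{j}{q_\alpha}\bigr)\lambda_{mU,r_{\alpha-1}}(H,\omega) + \tfrac{j}{q_\alpha}\lambda_{mU,r_\alpha}(H,\omega) \geq \frac{2\pi}{\Vol(M,\omega)}\bigl(\deg_\omega(\mathcal{E}_{\alpha-1}) + j\mu_\omega(\mathcal{Q}_\alpha)\bigr),
\end{equation}
which coincides with $\frac{2\pi}{\Vol(M,\omega)}\sum_{i=1}^k\mu_{i,\omega}(E)$ by the definition of the HN-type, completing (\ref{mU1}).

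The genuine point is the HN-rank estimate, where the Chern-Weil defect term $\int|\bar\partial\pi_\alpha^H|^2$ enters with the favorable sign. Once this is in place, the hardest-looking case of non-HN ranks is reduced to a purely numerical convex-combination argument, and the lower-eigenvalue inequality (\ref{mL1}) costs no additional work by the duality identity above. I do not foresee a real obstacle beyond invoking the Chern-Weil identity for weakly holomorphic projections in the Gauduchon setting, which is standard in the references cited.
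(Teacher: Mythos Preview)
Your proposal is correct and follows essentially the same approach as the paper: establish (\ref{mU1}) at the HN-ranks $r_\alpha$ via the Chern--Weil formula and the Ky Fan eigenvalue bound, then interpolate to intermediate ranks using the monotonicity of the eigenvalues. Your organization is slightly cleaner in two places---you handle intermediate ranks by a one-shot convex combination between $r_{\alpha-1}$ and $r_\alpha$ where the paper inducts one step at a time, and you obtain (\ref{mL1}) from (\ref{mU1}) via the duality identity $\lambda_{mL,k}+\lambda_{mU,r-k}=\frac{2\pi}{\Vol(M,\omega)}\deg_\omega(E)$ rather than repeating the argument with quotients---but the underlying ideas coincide.
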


\begin{proof}
We only give the proof of (\ref{mU1}), the proof of (\ref{mL1}) is similar.

Let $0=\mathcal{E}_0\subset \mathcal{E}_1\subset\cdots\subset \mathcal{E}_l=E$ be the Harder-Narasimhan filtration of  $E$, and set
\begin{equation}
r_j=\rank(\mathcal{E}_j)
\end{equation}
for $1 \leq j \leq l$.
Recall the following Chern-Weil formula (\cite{Bru}):
\begin{equation}\label{B1}
\begin{split}
\deg_{\omega }(\mathcal{E}_j)=&\int_{M\setminus \Sigma_{alg}}\frac{\sqrt{-1}}{2\pi}\tr F_{H_{\mathcal{E}_j}}\wedge \frac{\omega^{n-1}}{(n-1)!}\\
=& \frac{1}{2\pi}\int_{M\setminus \Sigma_{alg}} (\sqrt{-1}\textmd{tr}(\pi_{\mathcal{E}_j}^{H} \Lambda_{\omega} F_{H})-|\bar{\partial }\pi_{\mathcal{E}_j}^{H} |_{H}^{2})\frac{\omega^{n}}{n!},
\end{split}
\end{equation}
where $\Sigma_{alg}$ is the singular set of $\mathcal{E}_j$, $H_{\mathcal{E}_j}$ is the induced metric on $\mathcal{E}_j|_{M\setminus \Sigma_{alg}}$ and $\pi_{\mathcal{E}_j}^{H}$ is the orthogonal projection onto $\mathcal{E}_j$ with respect to the metric $H$.
Together with the definition of $\vec\mu_\omega (E)$, one can easily verify that
\begin{equation}\label{mUeq1}
\lambda_{mU,r_j}(H,\omega)\geq \frac{2\pi}{\Vol(M,\omega)}\deg_{\omega}(\mathcal{E}_j)=\frac{2\pi}{\Vol(M,\omega)}\sum_{i=1}^{r_j}\mu_{i,\omega}(E).
\end{equation}
We will proceed by induction on $k$.

\noindent 1) For $k=1$, we have
\begin{equation}
r_1\lambda_{mU,1}\geq\lambda_{mU,r_1}(H,\omega)\geq \frac{2\pi}{\Vol(M,\omega)}\sum_{i=1}^{r_1}\mu_{i,\omega}(E)= \frac{2r_1\pi}{\Vol(M,\omega)}\mu_{1,\omega}(E).
\end{equation}
So (\ref{mU1}) holds for $k=1$.

\noindent 2) Assume that $1\leq s\leq r-1$ and (\ref{mU1}) holds for $k=s$. Namely
\begin{equation}\label{mUeq2}
\lambda_{mU,s}(H,\omega)\geq \frac{2\pi}{\Vol(M,\omega)}\sum_{i=1}^s\mu_{i,\omega}(E).
\end{equation}
If $s+1=r_j$ for some $j=1,\cdots,l$, then of course (\ref{mU1}) holds for $k=s+1$. Otherwise we can find some $j=1,\cdots,l$ such that $r_{j-1}<s+1<r_j$. Based on (\ref{mUeq1}),  one can see
\begin{equation}\begin{aligned}
&\quad\lambda_{mU,s}(H,\omega)+\frac{r_j-s}{\Vol(M,\omega)}\int_M\lambda_{s+1}(H,\omega)\frac{\omega^n}{n!}\\
&\geq \lambda_{mU,r_j}(H,\omega)\\
&\geq \frac{2\pi}{\Vol(M,\omega)}\sum_{i=1}^{r_j}\mu_{i,\omega}(E)\\
&=\frac{2\pi}{\Vol(M,\omega)}\left(\sum_{i=1}^{s}\mu_{i,\omega}(E)+(r_j-s)\mu_{s+1,\omega}(E)\right).
\end{aligned}\end{equation}
Making use of (\ref{mUeq2}), we deduce
\begin{equation}
(r_j-s)\lambda_{mU,s+1}(H,\omega)\geq \frac{2(r_j-s)\pi}{\Vol(M,\omega)}\sum_{i=1}^{s+1}\mu_{i,\omega}(E).
\end{equation}
Hence (\ref{mU1}) holds for $k=s+1$.
This concludes the proof.
\end{proof}

In order to show the ``$\geq$" part of (\ref{limmL0}) and the ``$\leq$" part of (\ref{limmU0}), we only need to apply Lemma \ref{lem3'} and the following  lemma.

\begin{lem}\label{lem1'}
For any $\delta>0$, we can find a smooth solution  $H_\delta(t)$ of the Hermitian-Yang-Mills flow (\ref{Flow}) on $(E, \bar{\partial }_{E})$ such that for any $1 \leq k \leq  r$,
\begin{align}
\label{limmL1}
&\lim_{t\rightarrow\infty}\lambda_{mL,k}(H_\delta(t),\omega)\geq \frac{2\pi}{\Vol(M,\omega)}\sum_{i=1}^k\mu_{r+1-i,\omega}(E)-k\delta,\\
\label{limmU1}
&\lim_{t\rightarrow\infty}\lambda_{mU,k}(H_\delta(t),\omega)\leq \frac{2\pi}{\Vol(M,\omega)}\sum_{i=1}^k\mu_{i,\omega}(E)+k\delta.
\end{align}
\end{lem}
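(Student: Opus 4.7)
The plan is to take as initial data for the Hermitian-Yang-Mills flow the approximate critical metrics supplied by Theorem \ref{theorem1}. Fix a background Hermitian metric $K$ on $E$, and for $\varepsilon>0$ small let $H_\varepsilon$ be the solution of the perturbed Hermitian-Einstein equation (\ref{eq}) furnished by that theorem. Set $H_\delta(0)=H_\varepsilon$ and let $H_\delta(t)$ be the resulting smooth solution of (\ref{Flow}). Since $\omega$ is Gauduchon, Theorem \ref{thm2'} guarantees that $\lambda_{mL,k}(H_\delta(t),\omega)$ is monotone increasing and $\lambda_{mU,k}(H_\delta(t),\omega)$ is monotone decreasing in $t$. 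It therefore suffices to establish (\ref{limmL1}) and (\ref{limmU1}) at $t=0$, i.e.\ for the metric $H_\varepsilon$ itself, provided $\varepsilon$ is chosen small enough.

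The key structural observation is that both $\sqrt{-1}\Lambda_\omega F_{H_\varepsilon}$ and $\Phi^{HN}_\omega(E,K)$ are $K$-Hermitian. For the former, (\ref{eq}) rewrites as $\sqrt{-1}\Lambda_\omega F_{H_\varepsilon}=\lambda\cdot\Id_E-\varepsilon\log(K^{-1}H_\varepsilon)$, and since $K^{-1}H_\varepsilon$ is a positive $K$-Hermitian endomorphism so is its real logarithm. The latter is $K$-Hermitian by construction as a real linear combination of $K$-orthogonal projections, and its eigenvalues sorted in descending order are precisely the components of $\vec\mu_\omega(E)$. Working in a $K$-orthonormal frame both endomorphisms are Hermitian matrices with real spectra, and the Hoffman--Wielandt inequality furnishes the pointwise bound
\begin{equation*}
\bigg|\vec\lambda(H_\varepsilon,\omega)-\frac{2\pi}{\Vol(M,\omega)}\vec\mu_\omega(E)\bigg|^2\leq \bigg|\sqrt{-1}\Lambda_\omega F_{H_\varepsilon}-\frac{2\pi}{\Vol(M,\omega)}\Phi^{HN}_\omega(E,K)\bigg|^2_K
\end{equation*}
almost everywhere on $M$.

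Integrating this inequality against $\omega^n/n!$ and invoking (\ref{eqthm1}) with $p=2$ shows that $\vec\lambda(H_\varepsilon,\omega)$ tends to the constant vector $\frac{2\pi}{\Vol(M,\omega)}\vec\mu_\omega(E)$ in $L^2(M,\omega)$ as $\varepsilon\to 0$. By Cauchy--Schwarz the partial sums $\lambda_{L,k}(H_\varepsilon,\omega)$ and $\lambda_{U,k}(H_\varepsilon,\omega)$ then converge in $L^1(M,\omega)$ to the respective constants $\frac{2\pi}{\Vol(M,\omega)}\sum_{i=1}^k\mu_{r+1-i,\omega}(E)$ and $\frac{2\pi}{\Vol(M,\omega)}\sum_{i=1}^k\mu_{i,\omega}(E)$, and consequently so do the mean values $\lambda_{mL,k}(H_\varepsilon,\omega)$ and $\lambda_{mU,k}(H_\varepsilon,\omega)$. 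Fixing $\varepsilon$ small enough so that both deviations are bounded by $k\delta$ gives (\ref{limmL1}) and (\ref{limmU1}) at $t=0$, and the monotonicity from Theorem \ref{thm2'} propagates them to the limit $t\to\infty$. The only delicate point is the identification of both endomorphisms as $K$-Hermitian, which brings the classical Hermitian eigenvalue perturbation theory into play; once this is in place, the rest is essentially measure-theoretic bookkeeping.
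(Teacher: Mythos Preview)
Your proof is correct and follows essentially the same approach as the paper, which simply says ``Apply Theorem \ref{theorem1} and Theorem \ref{thm2'}.'' You have filled in the details the paper leaves implicit: the eigenvalue perturbation step via Hoffman--Wielandt, enabled by the observation that equation (\ref{eq}) forces $\sqrt{-1}\Lambda_\omega F_{H_\varepsilon}$ to be $K$-Hermitian (not just $H_\varepsilon$-Hermitian), so that both it and $\Phi^{HN}_\omega(E,K)$ are simultaneously Hermitian in a $K$-orthonormal frame. One minor imprecision: Theorem \ref{theorem1} only furnishes a \emph{sequence} $\varepsilon_i\to 0$ along which (\ref{eqthm1}) holds, so you should take $\varepsilon=\varepsilon_i$ with $i$ large rather than ``$\varepsilon>0$ small''.
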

\begin{proof}Apply Theorem \ref{theorem1} and Theorem \ref{thm2'}.
\end{proof}

\section{Some applications}

\subsection{Application to the calculation of the Harder-Narasimhan type}
Using Theorem \ref{thm3}, we can easily calculate the Harder-Narasimhan types of tensor products, symmetric and exterior powers of holomorphic vector bundles.

Firstly we review the symmetric and exterior powers of a complex or real vector space.
Let $r\geq 2$, $k\geq 1$ and  assume $V$ is an $r$-dimensional complex (resp.  real) vector space. We choose a basis $\{e_1,\cdots,e_r\}$ of $V$ and write
\begin{equation}
{\rm T\Lambda}(k,r)=\{(i_1,\cdots,i_k) \; | \; i_l=1,\cdots,r, \; \textrm{for every}\; 1 \leq  l \leq k\}.
\end{equation}
Then a basis of the $k$th tensor power $V^{\otimes k}$ can be chosen as
\begin{equation}
\{e_{i_1}\otimes\cdots\otimes e_{i_k} \; | \; (i_1,\cdots,i_k)\in {\rm T\Lambda}(k,r)\}.
\end{equation}
The symmetrization ${\rm Sym}:V^{\otimes k}\rightarrow V^{\otimes k}$ is the linear operator  satisfying
\begin{equation}
{\rm Sym} (e_{i_1}\otimes\cdots\otimes e_{i_k})=\frac{1}{k!}\sum_{\sigma} e_{i_{\sigma(1)}}\otimes\cdots\otimes e_{i_{\sigma(k)}}
\end{equation}
for any $(i_1,\cdots,i_k)\in {\rm T\Lambda}(k,r)$, where the summation is taken over all permutations $\sigma$ of $\{1,\cdots,k\}$.
The $k$th symmetric power $S^k V$ is the set of all elements  invariant under the symmetrization ${\rm Sym}$.
Clearly $({\rm Sym})^2={\rm Sym}$, so $S^k V={\rm Sym}(V^{\otimes k})$ and we have the natural decomposition
\begin{equation}
V^{\otimes k}\xrightarrow{({\rm Sym}, \, \Id-{\rm Sym})}S^k V\oplus \ker{\rm Sym}.
\end{equation}
Furthermore, set
\begin{equation}
{\rm S\Lambda}(k,r)=\{(a_1,\cdots,a_r)\in\mathbb Z_{\geq 0} \; | \; a_1+\cdots+a_r=k\}
\end{equation}
and for every $a=(a_1,\cdots,a_r)\in {\rm S\Lambda}(k,r)$,
\begin{equation}
e^a=\frac{k!}{a!}{\rm Sym}(e_1^{\otimes a_1}\otimes \cdots\otimes e_r^{\otimes a_r}),
\end{equation}
where $a!=a_1!\cdots a_r!$.
Then a basis of $S^k V$ can be chosen as
\begin{equation}
\{e^a \; | \; a\in {\rm S\Lambda}(k,r)\}.
\end{equation}
The anti-symmetrization (or skew-symmetrization) ${\rm Alt}:V^{\otimes k}\rightarrow V^{\otimes k}$ is the linear operator  satisfying
\begin{equation}
{\rm Alt} (e_{i_1}\otimes\cdots\otimes e_{i_k})=\frac{1}{k!}\sum_{\sigma} {\rm sgn}(\sigma) e_{i_{\sigma(1)}}\otimes\cdots\otimes e_{i_{\sigma(k)}}
\end{equation}
for any $(i_1,\cdots,i_k)\in {\rm T\Lambda}(k,r)$, where the summation is taken over all permutations $\sigma$ of $\{1,\cdots,k\}$ and  ${\rm sgn}(\sigma)$ is the sign of $\sigma$.
Actually ${\rm Alt}=0$ if $k>r$.
For $k\leq r$, the $k$th exterior (or alternating)  power $\wedge^k V$ is the set of all elements  invariant under the anti-symmetrization ${\rm Alt}$.
Of course $({\rm Alt})^2={\rm Alt}$, hence $\wedge^k V={\rm Alt}(V^{\otimes k})$ and we have the natural decomposition
\begin{equation}
V^{\otimes k}\xrightarrow{({\rm Alt}, \, \Id-{\rm Alt})}\wedge^k V\oplus \ker{\rm Alt}.
\end{equation}
Usually we write
\begin{equation}
v_1\wedge\cdots\wedge v_k=k!{\rm Alt}(v_1\otimes\cdots\otimes v_k).
\end{equation}
Let
\begin{equation}
{\rm A\Lambda}(k,r)=\{(i_1,\cdots,i_k)\in {\rm T\Lambda}(k,r) \; | \; i_1<\cdots<i_k\}.
\end{equation}
Then a basis of $\wedge^k V$ can be chosen as
\begin{equation}
\{e_{i_1}\wedge\cdots\wedge e_{i_k} \; | \; (i_1,\cdots,i_k)\in {\rm A\Lambda}(k,r)\}.
\end{equation}

Fix an inner product on $V$ and suppose $\tilde V$ is an $s$-dimensional  ($s\geq 1$) complex (resp. real) vector space endowed with an inner product. Then there is a natural inner product on $V\otimes\tilde V$, uniquely determined by
\begin{equation}
\langle v_1\otimes\tilde v_1,v_2\otimes \tilde v_2\rangle=\langle v_1,v_2\rangle\langle\tilde v_1,\tilde v_2\rangle
\end{equation}
for $v_1, v_2 \in V$ and $\tilde v_1,  \tilde v_2\in  \tilde V$.
Besides, let $\{e_1,\cdots,e_r\}$ be an orthonormal basis of $V$ and $\{\tilde e_1,\cdots,\tilde e_s\}$ an orthonormal basis of $\tilde V$, then
\begin{equation}
\{e_i\otimes \tilde e_j \; | \; 1\leq i\leq r,1\leq j\leq s\}
\end{equation}
is an orthonormal basis of $V\otimes \tilde V$.

Notice that the operators ${\rm Sym}$ and ${\rm Alt}$ are self-adjoint with respect to the induced inner product on $V^{\otimes k}$. Thus the decompositions $V^{\otimes k}=S^k V\oplus \ker {\rm Sym}$ and $V^{\otimes k}=\wedge^k V\oplus \ker {\rm Alt}$ (when $k\leq r$) are actually orthogonal. Furthermore,
\begin{equation}
\left\{{\textstyle\sqrt{ \frac{a!}{k!}}}e^a \; \bigg| \; a\in {\rm S\Lambda}(k,r)\right\}
\end{equation}
 is an orthonormal basis of $S^k V$ and when $k\leq r$,
\begin{equation}
\left\{{\textstyle \frac{1}{\sqrt{k!}}}e_{i_1}\wedge\cdots\wedge e_{i_k} \; \bigg| \; (i_1,\cdots,i_k)\in {\rm A\Lambda}(k,r)\right\}
\end{equation}
is an orthonormal basis of $\wedge^k V$.

The notions and decompositions above can be naturally generalized to the case of complex vector bundles.

When expressing the eigenvalues of self-adjoint operators on an $r$-dimensional complex or real vector space, it is convenient to use the space
\begin{equation}
\mathbb R^r_{\downarrow}:=\{(x_1,\cdots,x_r)\in\mathbb R^r \; | \; x_1\geq\cdots\geq x_r\}.
\end{equation}
For convenience, we introduce the map $\tau:\mathbb R^r\rightarrow \mathbb R^r_{\downarrow}$ defined as follows. For any $x=(x_1,\cdots,x_r)\in\mathbb R^r$, define $\tau(x)=(y_1,\cdots,y_r)$ to be  the unique vector such that
\begin{equation}
(y_1,\cdots,y_r)=(x_{\sigma(1)},\cdots,x_{\sigma(r)}) \in\mathbb R^r_{\downarrow}
\end{equation}
for some permutation $\sigma$. Obviously $\tau$ is well-defined. Moreover, the well-known rearrangement inequality implies
\begin{equation}
|\tau(x)-\tau(\tilde x)|\leq |x-\tilde x|,\qquad \forall \,  x,\tilde x\in\mathbb R^r.
\end{equation}

For later use, we give the following simple fact without proof.
\begin{prop}\label{limtcomm}
Let $D\subset \mathbb R^r$  be a closed subset, $T:D\rightarrow \mathbb R^N(N\geq 1)$ a Lipschitz map and $(X,g)$ a Riemannian manifold. If  $\{f_i\}$ is a sequence of measurable maps from $X$ into $D$ and
\begin{equation}
\lim_{i\rightarrow\infty}||f_i-v||_{L^1(X)}=0
\end{equation}
for some constant vector $v\in D$, then
\begin{equation}
\lim_{i\rightarrow\infty}||T(f_i)-T(v)||_{L^1(X)}=0.
\end{equation}
\end{prop}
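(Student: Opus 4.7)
The approach is a direct application of the Lipschitz property of $T$. Since $T:D\to\mathbb{R}^N$ is Lipschitz, there exists a constant $L\geq 0$ such that
\begin{equation*}
|T(x)-T(y)|\leq L|x-y|\quad\text{for all } x,y\in D.
\end{equation*}
Because each $f_i$ takes values in $D$ and $v\in D$ by hypothesis, this inequality applies pointwise to give
\begin{equation*}
|T(f_i(x))-T(v)|\leq L|f_i(x)-v|
\end{equation*}
for almost every $x\in X$.

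Next, I would integrate this pointwise estimate against the Riemannian volume form $d\mathrm{vol}_g$ on $X$, obtaining
\begin{equation*}
\|T(f_i)-T(v)\|_{L^1(X)}\leq L\,\|f_i-v\|_{L^1(X)}.
\end{equation*}
Letting $i\to\infty$ and invoking the assumption $\|f_i-v\|_{L^1(X)}\to 0$ yields the desired conclusion.

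The only technical point worth mentioning is the measurability of $T\circ f_i$, which is immediate since $T$ is continuous (being Lipschitz) and each $f_i$ is measurable; composing a continuous map with a measurable map is measurable. Well-definedness of the $L^1$ norms is also not an issue in the intended application, where $X$ is the compact base manifold $M$. I do not expect any substantive obstacle here---indeed, the authors themselves introduce the result as a ``simple fact'' and state it without proof---so the plan above is essentially a verbatim transcription of the Lipschitz estimate followed by integration.
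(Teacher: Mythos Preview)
Your proof is correct; the paper itself states this proposition as a ``simple fact without proof,'' so there is nothing to compare against. Your argument via the pointwise Lipschitz bound followed by integration is exactly the intended justification.
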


Let $(M,\omega)$ be a compact Gauduchon manifold of dimension $n\geq 2$, $E$ a holomorphic vector bundle of rank $r\geq 2$ over $M$ and $\tilde E$ a holomorphic vector bundle  of rank $s\geq 1$ over $M$. Suppose $H$ and $\tilde H$  are the Hermitian metrics on $E$ and $\tilde E$, respectively. We fix an arbitrary point $z\in M$ and choose the orthogonal bases $\{e_1,\cdots,e_r\}$ of $E|_z$ and $\{\tilde e_1,\cdots,\tilde e_s\}$ of $\tilde E|_z$, respectively, such that
\begin{equation}
\sqrt{-1}\Lambda_\omega F_H|_z e_i=\lambda_i(H,\omega)(z)e_i,\quad\sqrt{-1}\Lambda_\omega F_{\tilde H}|_z \tilde e_j=\lambda_j(\tilde H,\omega)(z) \tilde e_j.
\end{equation}
Let $k$ be a positive integer. Next we discuss the mean curvatures of the tensor products, symmetric and exterior powers of the holomorphic vector bundles.
\begin{enumerate}
\item Denote by $H\otimes \tilde H$ the induced Hermitian metric on $E\otimes \tilde E$. We have
\begin{equation}
F_{H\otimes \tilde H}=F_H\otimes\Id_{\tilde E}+\Id_E\otimes F_{\tilde H},
\end{equation}
and consequently
\begin{equation}
\sqrt{-1}\Lambda_\omega F_{H\otimes \tilde H}=(\sqrt{-1}\Lambda_\omega F_H)\otimes \Id_{\tilde E}+\Id_E\otimes (\sqrt{-1}\Lambda_\omega F_{\tilde H}).
\end{equation}
Hence there holds
\begin{equation}
\sqrt{-1}\Lambda_\omega F_{H\otimes \tilde H}|_z (e_i\otimes \tilde e_j)=\big(\lambda_i(H,\omega)(z)+\lambda_j(\tilde H,\omega)(z)\big)e_i\otimes\tilde e_j
\end{equation}
for every $1\leq i\leq r$ and $1\leq j\leq s$.

\item Denote the induced Hermitian metric on $E^{\otimes k}$ by $H^{\otimes k}$. One can find
\begin{equation}
F_{H^{\otimes k}}=\sum_{i=1}^k \Id_{E^{\otimes (i-1)}}\otimes F_H\otimes\Id_{E^{\otimes (k-i)}},
\end{equation}
and consequently
\begin{equation}
\sqrt{-1}\Lambda_\omega F_{H^{\otimes k}}=\sum_{i=1}^k \Id_{E^{\otimes (i-1)}}\otimes (\sqrt{-1}\Lambda_\omega F_H)\otimes\Id_{E^{\otimes (k-i)}}.
\end{equation}
Then it follows that
\begin{equation}
\sqrt{-1}\Lambda_\omega F_{H^{\otimes k}}|_z (e_{i_1}\otimes\cdots\otimes e_{i_k})=\bigg(\sum_{l=1}^k \lambda_{i_l}(H,\omega)(z)\bigg)e_{i_1}\otimes\cdots\otimes e_{i_k}
\end{equation}
for every $(i_1,\cdots,i_k)\in {\rm T\Lambda}(k,r)$.

\item Set $S^k H=H^{\otimes k}|_{S^k E}$.  Since the decomposition
\begin{equation}
E^{\otimes k}=S^kE\oplus \ker {\rm Sym}
\end{equation}
is holomorphic and orthogonal with respect to $H^{\otimes k}$, we have
\begin{equation}
F_{S^k H}=F_{H^{\otimes k}}|_{S^k E},\qquad \sqrt{-1}\Lambda_\omega F_{S^k H}=(\sqrt{-1}\Lambda_\omega F_{H^{\otimes k}})|_{S^k E}.
\end{equation}
Thus it is easy to verify that
\begin{equation}
\sqrt{-1}\Lambda_\omega F_{S^kH}|_z (e^a)=\bigg(\sum_{i=1}^r a_i\lambda_{i}(H,\omega)(z)\bigg)e^a
\end{equation}
for every $a=(a_1,\cdots,a_r)\in {\rm S\Lambda}(k,r)$.

\item For $k\leq r$, set $\wedge^k H=H^{\otimes k}|_{\wedge^k E}$. Because the decomposition
\begin{equation}
E^{\otimes k}=\wedge^kE\oplus \ker {\rm Alt}
\end{equation}
is holomorphic and orthogonal with respect to $H^{\otimes k}$, we know
\begin{equation}
F_{\wedge^k H}=F_{H^{\otimes k}}|_{\wedge^k E},\qquad F_{\wedge^k H}=(\sqrt{-1}\Lambda_\omega F_{H^{\otimes k}})|_{\wedge^k E}.
\end{equation}
So one can see
\begin{equation}
\sqrt{-1}\Lambda_\omega F_{\wedge^kH}|_z (e_{i_1}\wedge\cdots\wedge e_{i_k})=\bigg(\sum_{l=1}^k \lambda_{i_l}(H,\omega)(z)\bigg)e_{i_1}\wedge\cdots\wedge e_{i_k}
\end{equation}
for every $(i_1,\cdots,i_k)\in {\rm A\Lambda}(k,r)$.
\end{enumerate}

By the arguments above,  all the eigenvalues of $\sqrt{-1}\Lambda_\omega F_{H\otimes \tilde H}$, $\sqrt{-1}\Lambda_\omega F_{H^{\otimes k}}$, $\sqrt{-1}\Lambda_\omega F_{S^kH}$ and $\sqrt{-1}\Lambda_\omega F_{\wedge^kH}$ are computed. After sorting the eigenvalues in the descending order, we can eventually determine $\vec\lambda(H\otimes\tilde H,\omega)$, $\vec\lambda(H^{\otimes k},\omega)$, $\vec\lambda(S^kH ,\omega)$ and $\vec\lambda(\wedge^kH,\omega)$. To give the specific expressions, we introduce the following notations.

\begin{enumerate}
\item Let $N=rs=\dim (\mathbb R^r\otimes\mathbb R^s)$ and
\begin{equation}
\Lambda(r,s)=\{(i,j)\in\mathbb Z^2 \; | \; 1\leq i\leq r,1\leq j\leq s\}.
\end{equation}
Define $\vec T:\mathbb R^r\times \mathbb R^s\rightarrow \mathbb R^{N}_{\downarrow}$ to be
\begin{equation}
\vec T((x_1,\cdots,x_r),(y_1,\cdots,y_s))=\tau(t(\varrho(1)),\cdots,t(\varrho(N))),
\end{equation}
where $\varrho:\{1,\cdots,N\}\rightarrow \Lambda(r,s)$ is an arbitrary bijective map and
\begin{equation}
t(i,j)=x_i+y_j,\qquad \forall \, (i,j)\in \Lambda(r,s).
\end{equation}

\item Let $N=r^k=\dim ((\mathbb R^r)^{\otimes k})$.
Define $\vec T_k:\mathbb R^r\rightarrow\mathbb R^{N}_{\downarrow}$  to be
\begin{equation}
\vec T_k(x_1,\cdots,x_r)=\tau(t(\varrho(1)),\cdots,t(\varrho(N))),
\end{equation}
where $\varrho:\{1,\cdots,N\}\rightarrow {\rm T\Lambda}(k,r)$ is an arbitrary bijective map and
\begin{equation}
t(i_1,\cdots,i_k)=\sum_{l=1}^kx_{i_l},\qquad \forall \,  (i_1,\cdots,i_k)\in {\rm T\Lambda}(k,r).
\end{equation}

\item  Let $N=\binom{r+k-1}{k}=\dim (S^k(\mathbb R^r))$. Define $\vec S_k:\mathbb R^r\rightarrow\mathbb R^{N}_{\downarrow}$ to be
\begin{equation}
\vec S_k(x_1,\cdots,x_r)=\tau(t(\varrho(1)),\cdots,t(\varrho(N))),
\end{equation}
where $\varrho:\{1,\cdots,N\}\rightarrow {\rm S\Lambda}(k,r)$ is an arbitrary bijective map and
\begin{equation}
t(a_1,\cdots,a_r)=\sum_{i=1}^ra_ix_{i},\qquad \forall \,  (a_1,\cdots,a_r)\in {\rm S\Lambda}(k,r).
\end{equation}

\item Assume that $k\leq r$ and let $N=\binom{r}{k}=\dim (\wedge^k(\mathbb R^r))$.  Define $\vec A_k:\mathbb R^r\rightarrow\mathbb R^N_{\downarrow}$ to be
\begin{equation}
\vec A_k(x_1,\cdots,x_r)=\tau(t(\varrho(1)),\cdots,t(\varrho(N))),
\end{equation}
where $\varrho:\{1,\cdots,N\}\rightarrow {\rm A\Lambda}(k,r)$ is an arbitrary bijective map and
\begin{equation}
t (i_1,\cdots,i_k)=\sum_{l=1}^kx_{i_l},\qquad \forall  \, (i_1,\cdots,i_k)\in {\rm A\Lambda}(k,r).
\end{equation}
\end{enumerate}
Note that every map constructed  above is globally Lipschitz and doesn't depend on the choice of $\varrho$.

Now the following formulas can be summarised:
\begin{enumerate}
\item $\vec\lambda(H\otimes \tilde H,\omega)=\vec T(\vec\lambda(H,\omega),\vec\lambda(\tilde H,\omega))$;
\item $\vec\lambda(H^{\otimes k},\omega)=\vec T_k(\vec\lambda(H,\omega))$;
\item $\vec\lambda(S^kH,\omega)=\vec S_k(\vec\lambda(H,\omega))$;
\item $\vec\lambda(\wedge^k H,\omega)=\vec A_k(\vec\lambda(H,\omega))$ when $k\leq r$.
\end{enumerate}

Furthermore, suppose $H(t)$ and $\tilde H(t)$  are the solutions of the Hermitian-Yang-Mills flows on $E$ and  $\tilde E$, respectively. It is not hard to find  that $H(t)\otimes \tilde H(t)$, $H(t)^{\otimes k}$, $S^kH(t)$ and $\wedge ^k H(t)$ (when $k\leq r$) are the solutions of the Hermitian-Yang-Mills flows on $E\otimes \tilde E$, $E^{\otimes k}$, $S^k E$ and $\wedge^k E$, respectively. Combining Theorem \ref{thm3} and Proposition \ref{limtcomm}, we conclude
\begin{thm}\label{HNthm}
Let $k\in\mathbb Z_{>0}$. Then
\begin{enumerate}
\item $\vec\mu_{\omega}(E\otimes \tilde E)=\vec T(\vec\mu_\omega(E),\vec\mu_\omega(\tilde E))$;
\item $\vec\mu_\omega(E^{\otimes k})=\vec T_k(\vec\mu_\omega(E))$;
\item $\vec\mu_\omega(S^k E)=\vec S_k(\vec\mu_\omega(E))$;
\item $\vec\mu_\omega(\wedge^kE)=\vec A_k(\vec\mu_\omega(E))$ when $k\leq r$.
\end{enumerate}
\end{thm}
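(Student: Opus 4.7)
The plan is to reduce all four identities in Theorem~\ref{HNthm} to a single convergence argument using Theorem~\ref{thm3} plus the Lipschitz passage-to-the-limit statement in Proposition~\ref{limtcomm}. I will spell this out for the tensor-product case first, since the remaining three are analogous.

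Pick Hermitian metrics $H_0$ on $E$ and $\tilde H_0$ on $\tilde E$, and let $H(t)$, $\tilde H(t)$ be the corresponding smooth solutions of the Hermitian-Yang-Mills flow (\ref{Flow}) on $E$ and $\tilde E$ (the constants $\lambda$ used in the flows do not matter, and may be chosen independently). As remarked in the excerpt just before the theorem, the induced metric $H(t)\otimes \tilde H(t)$ on $E\otimes \tilde E$ is again a smooth solution of (\ref{Flow}) on $E\otimes \tilde E$ (with the appropriate shifted constant); this follows from a direct differentiation of $H\otimes \tilde H$ using the identity
\begin{equation}
\sqrt{-1}\Lambda_\omega F_{H\otimes \tilde H}=(\sqrt{-1}\Lambda_\omega F_H)\otimes \Id_{\tilde E}+\Id_E\otimes (\sqrt{-1}\Lambda_\omega F_{\tilde H}),
\end{equation}
combined with the evolution equations of $H$ and $\tilde H$. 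Then Theorem~\ref{thm3} applied to the three flows on $E$, $\tilde E$ and $E\otimes \tilde E$ gives $L^2$- (hence $L^1$-) convergence of the ordered eigenvalue vectors:
\begin{equation}
\vec\lambda(H(t),\omega)\to \tfrac{2\pi}{\Vol(M,\omega)}\vec\mu_\omega(E),\quad \vec\lambda(\tilde H(t),\omega)\to \tfrac{2\pi}{\Vol(M,\omega)}\vec\mu_\omega(\tilde E),
\end{equation}
\begin{equation}
\vec\lambda(H(t)\otimes \tilde H(t),\omega)\to \tfrac{2\pi}{\Vol(M,\omega)}\vec\mu_\omega(E\otimes \tilde E).
\end{equation}

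By the eigenvalue formula established just before the statement, $\vec\lambda(H(t)\otimes\tilde H(t),\omega)=\vec T(\vec\lambda(H(t),\omega),\vec\lambda(\tilde H(t),\omega))$ pointwise. The map $\vec T$ is globally Lipschitz (since the entry sums $t(i,j)=x_i+y_j$ are Lipschitz and $\tau$ is $1$-Lipschitz by the rearrangement inequality), and it is well-defined independently of $\varrho$. Hence Proposition~\ref{limtcomm}, applied with $X=M$ and $f_i=(\vec\lambda(H(t_i),\omega),\vec\lambda(\tilde H(t_i),\omega))$, yields
\begin{equation}
\vec T(\vec\lambda(H(t),\omega),\vec\lambda(\tilde H(t),\omega))\ \longrightarrow\ \vec T\!\left(\tfrac{2\pi}{\Vol(M,\omega)}\vec\mu_\omega(E),\tfrac{2\pi}{\Vol(M,\omega)}\vec\mu_\omega(\tilde E)\right)
\end{equation}
in $L^1$. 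Comparing the two $L^1$-limits of the same family gives
\begin{equation}
\tfrac{2\pi}{\Vol(M,\omega)}\vec\mu_\omega(E\otimes \tilde E)=\vec T\!\left(\tfrac{2\pi}{\Vol(M,\omega)}\vec\mu_\omega(E),\tfrac{2\pi}{\Vol(M,\omega)}\vec\mu_\omega(\tilde E)\right).
\end{equation}
Since the entries of $\vec T$ are $\mathbb R$-linear in $(x,y)$ followed by a reordering, $\vec T$ is positively $1$-homogeneous: $\vec T(cx,cy)=c\vec T(x,y)$ for $c>0$. Cancelling the common factor $\tfrac{2\pi}{\Vol(M,\omega)}$ proves (1).

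For parts (2), (3), (4) I would apply the identical scheme to the induced flows $H(t)^{\otimes k}$ on $E^{\otimes k}$, $S^kH(t)$ on $S^kE$ and $\wedge^kH(t)$ on $\wedge^kE$, using in each case the corresponding eigenvalue formula
\begin{equation}
\vec\lambda(H^{\otimes k},\omega)=\vec T_k(\vec\lambda(H,\omega)),\ \vec\lambda(S^kH,\omega)=\vec S_k(\vec\lambda(H,\omega)),\ \vec\lambda(\wedge^kH,\omega)=\vec A_k(\vec\lambda(H,\omega)),
\end{equation}
together with the Lipschitz and positive $1$-homogeneity of $\vec T_k$, $\vec S_k$, $\vec A_k$ (all inherited from the corresponding linear combinations of coordinates followed by $\tau$). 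I expect no essential new difficulty in these three cases. The only potential subtlety is organisational: one must verify (once) that the induced metric solves the Hermitian-Yang-Mills flow on the induced bundle, which is a direct differentiation using the curvature identities recorded in the excerpt, and keep track of the fact that the homogeneity of the combinatorial maps is what allows the volume-normalised constants to drop out cleanly on both sides.
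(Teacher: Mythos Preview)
Your proposal is correct and follows essentially the same approach as the paper: the paper's proof is the one-line ``Combining Theorem~\ref{thm3} and Proposition~\ref{limtcomm}, we conclude,'' after having recorded the eigenvalue formulas and the fact that the induced metrics solve the flow on the induced bundles. Your write-up simply makes explicit the homogeneity step needed to cancel the factor $\tfrac{2\pi}{\Vol(M,\omega)}$, which the paper leaves implicit.
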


Immediately  Theorem \ref{HNthm} tells us that  for $k\geq 1$ and $l\geq 0$,
\begin{align}
&\mu_L(E^{\otimes k}\otimes \tilde E^{\otimes l},\omega)=k\mu_L(E,\omega)+l\mu_L(\tilde E,\omega),\\
&\mu_U(E^{\otimes k}\otimes \tilde E^{\otimes l},\omega)=k\mu_U(E,\omega)+l\mu_U(\tilde E,\omega),\\
&\mu_{L}(S^kE, \omega )=k\mu_{L}(E, \omega ),\\
&\mu_{U}(S^kE, \omega )=k\mu_{U}(E, \omega ),
\end{align}
and for any $k=1,\cdots,r$,
\begin{align}
&\mu_L(\wedge^k E,\omega)=\sum_{i=1}^k\mu_{r+1-i,\omega}(E)\geq k\mu_L(E,\omega),\\
&\mu_U(\wedge^k E,\omega)=\sum_{i=1}^k\mu_{i,\omega}(E)\leq k\mu_U(E,\omega).
\end{align}

As a simple corollary, we have
\begin{cor}
The following statements are equivalent:
\begin{enumerate}
\item $\mu_L(E,\omega)>0$;
\item $\mu_L(E^{\otimes k},\omega)>0$ for some (resp. every) $k\geq 1$;
\item $\mu_L(S^k E,\omega)>0$ for some (resp. every) $k\geq 1$.
\end{enumerate}
Whenever one of the above holds, $\mu_L(\wedge^k E,\omega)>0$ for every $k=1,\cdots,r$.
\end{cor}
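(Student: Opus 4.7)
The plan is straightforward: this corollary is an essentially immediate consequence of the four displayed slope identities and the exterior-power inequality stated just above, all of which in turn follow from Theorem \ref{HNthm}. I do not anticipate any real obstacle, since the substantive geometric work has already been packaged into those identities; what remains is a sign argument.

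For the equivalence (1) $\Leftrightarrow$ (2), I would simply invoke the identity
\begin{equation}
\mu_L(E^{\otimes k},\omega) = k\,\mu_L(E,\omega),
\end{equation}
which is read off from $\vec\mu_\omega(E^{\otimes k}) = \vec T_k(\vec\mu_\omega(E))$ or equivalently from the tensor-product slope formula by setting the second exponent to zero. Since $k \geq 1$ is a positive integer, multiplication by $k$ preserves sign, so positivity of $\mu_L(E,\omega)$ and positivity of $\mu_L(E^{\otimes k},\omega)$ are equivalent for every $k \geq 1$; this simultaneously handles both the ``some $k$'' and ``every $k$'' quantifiers in (2). The equivalence (1) $\Leftrightarrow$ (3) is proved by exactly the same argument applied to the symmetric-power identity $\mu_L(S^k E,\omega) = k\,\mu_L(E,\omega)$.

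Finally, for the last sentence of the corollary, I would apply the inequality $\mu_L(\wedge^k E,\omega) \geq k\,\mu_L(E,\omega)$ derived above: whenever (1) holds, the right-hand side is strictly positive for each $k = 1, \ldots, r$, and hence so is $\mu_L(\wedge^k E,\omega)$. The only minor bookkeeping point is to ensure that each identity is invoked in its admissible range ($k \geq 1$ for tensor and symmetric powers, $1 \leq k \leq r$ for exterior powers), but this is already built into the statement of the corollary.
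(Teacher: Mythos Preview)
Your proposal is correct and matches the paper's approach exactly: the paper presents this corollary without proof, as an immediate consequence of the displayed identities $\mu_L(E^{\otimes k},\omega)=k\mu_L(E,\omega)$, $\mu_L(S^kE,\omega)=k\mu_L(E,\omega)$, and $\mu_L(\wedge^k E,\omega)\geq k\mu_L(E,\omega)$ derived from Theorem~\ref{HNthm}. Your sign argument is precisely the intended one.
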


\subsection{A proof of Theorem \ref{ample}}

The ampleness of $(E, \bar{\partial}_{E})$ tells us that $c_{1}(E)>0$ and there exists a Hermitian metric $H_{0}$ such that $\sqrt{-1}\tr F_{H_{0}}>0$. Set the K\"ahler metric $\omega =\sqrt{-1}\tr F_{H_{0}}$. Let $H(t)$ be the solution of the Hermitian-Yang-Mills flow (\ref{Flow}) with initial data $H_{0}$. Noting that $\tr (\sqrt{-1}\Lambda_{\omega } F_{H_{0}}- \textmd{Id}_E)=0$, together with Proposition \ref{Flow002}, one immediately has
\begin{equation}
\sqrt{-1}\tr F_{H(t)} =\sqrt{-1}\tr F_{H_{0}}=\omega
\end{equation}
for any $0\leq t < \infty $. By \cite[Corollary 4.6]{LiZZ}, we have
\begin{equation}
\mu_{L}(E, \omega )> 0.
\end{equation}

To compute the second Chern form, we recall the following well-known formula
\begin{equation}
\begin{split}
& 4\pi^2(2c_2(E, H(t))-\frac{1}{2}c_1^2(E, H(t)))\\
=& (|\sqrt{-1} F_{H(t)}^\perp|^2_{H(t),\omega}-|\sqrt{-1}\Lambda_\omega F_{H(t)}^\perp|_{H(t)}^2)\frac{\omega^{2}}{2},
\end{split}
\end{equation}
where $\sqrt{-1} F_{H(t)}^\perp=\sqrt{-1} F_{H(t)}-\frac{1}{2}\Id_E\otimes\omega$ and $\sqrt{-1}\Lambda_\omega F_{H(t)}^\perp=\sqrt{-1}\Lambda_\omega F_{H(t)}-\Id_E$. Notice that
$4\pi^2c_1^2(E,H(t))=\omega^2$ and
\begin{equation}\begin{aligned}
|\sqrt{-1} F_{H(t)}^\perp|^2_{H(t),\omega}=&\left|\sqrt{-1} F_{H(t)}^\perp-(\sqrt{-1}\Lambda_{\omega}F_{H(t)}^\perp)\otimes \frac{\omega}{2}\right|^2_{H(t)}\\
&+\frac{1}{2}\left|\sqrt{-1}\Lambda_{\omega}F_{H(t)}^\perp \right|^2_{H(t)}\\
\geq & \frac{1}{2}|\sqrt{-1}\Lambda_\omega F_{H(t)}^\perp|_{H(t)}^2.
\end{aligned}\end{equation}
Therefore,
\begin{equation}
8\pi^2 c_2(E, H(t))\geq (2-|\sqrt{-1}\Lambda_\omega F_{H(t)}^\perp|_{H(t)}^2)\frac{\omega^2}{4}.
\end{equation}
As above, we denote the two eigenvalues of the mean curvature $\sqrt{-1}\Lambda_{\omega} F_{H(t)}$  by $\lambda_1(H(t))$, $\lambda_2(H(t))$, sorted in the descending order. It holds that
\begin{equation}
 \lambda_1(H(t))+\lambda_2(H(t))= 2
\end{equation}
and hence
\begin{equation}
\begin{split}
& 2-|\sqrt{-1}\Lambda_\omega F_{H(t)}^\perp|_{H(t)}^2\\
=& 2-\sum_{i=1}^2(\lambda_{i}(H(t))-1)^2\\
=& 4-(\lambda_{1}(H(t)))^2-(\lambda_{2}(H(t)))^2\\
=& (\lambda_{1}(H(t))+\lambda_{2}(H(t)))^2-(\lambda_{1}(H(t)))^2-(\lambda_{2}(H(t)))^2\\
=& 2\lambda_{1}(H(t))\lambda_{2}(H(t)).
\end{split}
\end{equation}
Owing to Theorem \ref{thm3},  we know that when $t$ is large enough, $\lambda_{1}(H(t))\geq\lambda_{2}(H(t))>0$, then
\begin{equation}
c_2(E, H(t))\geq\frac{\lambda_{1}(H(t))\lambda_{2}(H(t))}{16\pi^2}\omega^2>0.
\end{equation}
This completes the proof of Theorem \ref{ample}.

\section{The Atiyah-Bott-Bando-Siu question on some compact non-K\"ahler manifolds}
In this section, we study the Atiyah-Bott-Bando-Siu question on some compact non-K\"ahler manifolds.
\subsection{The  HN-type of the limit}
 We first prove that the HN-type of the limiting sheaf in Theorem \ref{thm1.1} is the same as that of the initial holomorphic vector bundle $(E,\overline{\partial}_{A_0})$.

     In the setting of Theorem \ref{thm1.9}, suppose $A(t)$ is a smooth solution of the heat flow \eqref{modified flow} over $(M,\omega)$ with the initial data $A_0\in \mathcal{A}_{H_0}^{1,1}$, and let $A_{\infty}$ be an Uhlenbeck limit. From Theorem \ref{thm1.1}, we know that $A_{\infty}$ is an admissible connection on the limiting reflexive sheaf $E_{\infty}$, and $\Lambda_{\omega}F_{A_\infty}$ is parallel. Write $r=\mathrm{rank}(E)$ and denote $r$ real eigenvalues of $\sqrt{-1}\Lambda_{\omega}F_{A_\infty}$ by $\lambda_{1, \infty }$, $\lambda_{2, \infty }$, $\cdots$, $\lambda_{r, \infty }$, sorted in the descending order. Then the constant vector $\frac{\mathrm{Vol}(M,\omega)}{2\pi}\vec{\lambda}_{\infty}=(\frac{\mathrm{Vol}(M,\omega)}{2\pi}\lambda_{1, \infty },\cdots,\frac{\mathrm{Vol}(M,\omega)}{2\pi}\lambda_{r, \infty })$ is just the HN-type of $(E_{\infty},\overline{\partial}_{A_\infty})$. Let $\vec{\mu}_{\omega}(E)=(\mu_{1,\omega},\cdots,\mu_{r,\omega})$ be the HN-type of $(E, \bar{\partial}_{A_0})$. Now we are going to prove $\frac{\mathrm{Vol}(M,\omega)}{2\pi}\vec{\lambda}_{\infty}=\vec{\mu}_{\omega}(E)$.

    Let $\mathbf{u}(r)$ denote the Lie algebra of the unitary group $U(r)$. Fix a real number $\rho\geq 1$. For any $\mathbf{a}\in\mathbf{u}(r)$, let $\varphi_{\rho}(\mathbf{a})=\Sigma_{j=1}^r{|\lambda_j|}^{\rho}$, where $\sqrt{-1}\lambda_j$ are the eigenvalues of $\mathbf{a}$. From \cite[Proposition 12.16]{atiyah1983yang} it follows that $\varphi_{\rho}$ is a convex function on $\mathbf{u}(r)$. For a given real number $N$, define the Hermitian-Yang-Mills type functionals as follows:
    \begin{equation}\label{hymfunctional}
        \mathrm{HYM}_{\rho,N}(A)=\int_{M}\varphi_{\rho}(\frac{\mathrm{Vol}(M,\omega)}{2\pi}\Lambda_{\omega}F_{A}-\sqrt{-1}N \Id_E)\frac{\omega^n}{n!},
    \end{equation}
In the following we assume that $\mathrm{Vol}(M,\omega)=2\pi$, and set $\mathrm{HYM}_{\rho,N}(\vec{\mu})=\mathrm{HYM}_{\rho}(\vec{\mu} + \vec{N})=2\pi \varphi_{\rho}(\sqrt{-1}(\vec{\mu} +\vec{N}))$, where $\vec{\mu}+\vec{N}=\diag(\mu_1+N,\cdots,\mu_r+N)$. We need the following two lemmas, whose proofs can be found in \cite[Lemma 2.23 and Proposition 2.24]{DW}.

    \begin{lem}\label{lemma3.1}The functional $\mathbf{a}\mapsto (\int_M \varphi_{\rho}(\mathbf{a})\frac{\omega^n}{n!})^{\frac{1}{\rho}}$ defines a norm on $L^{\rho}(\mathbf{u}(E))$ which is equivalent to the $L^{\rho}$-norm, where $\mathbf{u}(E)$ stands for the subbundle of $\mathrm{End}(E)$ consisting of skew-Hermitian endomorphisms.
    \end{lem}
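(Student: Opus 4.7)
The plan is to establish the claim in three steps: verify the pointwise norm properties of $\varphi_{\rho}^{1/\rho}$ on the finite-dimensional Lie algebra $\mathbf{u}(r)$, use finite-dimensional norm equivalence to compare pointwise with the standard Hermitian norm, and then apply Minkowski's inequality to integrate everything up. Throughout, I fix an auxiliary smooth Hermitian metric $K$ on $E$ so that the reference $L^{\rho}$-norm $\|\mathbf{a}\|_{L^{\rho}(K)} = (\int_M |\mathbf{a}|_K^{\rho}\,\omega^n/n!)^{1/\rho}$ is well defined on sections of $\mathbf{u}(E)$.

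First I would identify $\varphi_{\rho}(\mathbf{a})^{1/\rho}$ as the Schatten $\rho$-norm of the Hermitian matrix $\sqrt{-1}\mathbf{a}$: since $\mathbf{a}\in \mathbf{u}(r)$, the matrix $\sqrt{-1}\mathbf{a}$ is Hermitian with real eigenvalues $\lambda_1,\dots,\lambda_r$, so $\varphi_{\rho}(\mathbf{a}) = \sum_j|\lambda_j|^{\rho}$. Positive definiteness and absolute homogeneity of degree one for $\varphi_{\rho}^{1/\rho}$ follow immediately from the spectral decomposition, using $\varphi_{\rho}(t\mathbf{a}) = |t|^{\rho}\varphi_{\rho}(\mathbf{a})$. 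The triangle inequality is the only nontrivial point and is exactly the classical fact that the Schatten $\rho$-norm is a norm for $\rho \geq 1$; it can be read off from the convexity of $\varphi_{\rho}$ on $\mathbf{u}(r)$ recalled from \cite{atiyah1983yang} together with the $\rho$-homogeneity (so that $\varphi_{\rho}^{1/\rho}$ is positively $1$-homogeneous and subadditive).

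Second, since $\mathbf{u}(r)$ is finite-dimensional, any two norms on it are equivalent, and the equivalence constants depend only on the dimension. Because the eigenvalues of $\mathbf{a}$ are unitarily invariant, pointwise over $M$ one obtains universal constants $c_1, c_2 > 0$, depending only on $r$ and $\rho$, such that
\begin{equation*}
c_1 |\mathbf{a}(x)|_K \leq \varphi_{\rho}(\mathbf{a}(x))^{1/\rho} \leq c_2 |\mathbf{a}(x)|_K \qquad \text{for every } x \in M \text{ and every } \mathbf{a} \in \mathbf{u}(E).
\end{equation*}
Raising to the $\rho$-th power and integrating against $\omega^n/n!$ gives
\begin{equation*}
c_1^{\rho}\int_M |\mathbf{a}|_K^{\rho}\,\frac{\omega^n}{n!} \leq \int_M \varphi_{\rho}(\mathbf{a})\,\frac{\omega^n}{n!} \leq c_2^{\rho}\int_M |\mathbf{a}|_K^{\rho}\,\frac{\omega^n}{n!},
\end{equation*}
which is exactly the desired equivalence with the $L^{\rho}$-norm.

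Third, to see that the functional actually defines a norm on $L^{\rho}(\mathbf{u}(E))$, I would apply Minkowski's inequality to the pointwise norm $N(\mathbf{a}) := \varphi_{\rho}(\mathbf{a})^{1/\rho}$ established in step one: pointwise subadditivity $N(\mathbf{a}+\mathbf{b}) \leq N(\mathbf{a}) + N(\mathbf{b})$ together with the classical $L^{\rho}$ Minkowski inequality on $(M,\omega^n/n!)$ yields the triangle inequality for $(\int_M N(\mathbf{a})^{\rho}\,\omega^n/n!)^{1/\rho}$, while absolute homogeneity and positive definiteness transfer directly from the pointwise statement (positive definiteness uses the equivalence with the $L^{\rho}$-norm established in step two). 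The only genuine obstacle is verifying that $\varphi_{\rho}^{1/\rho}$ is a norm on $\mathbf{u}(r)$; everything else is a routine consequence of finite-dimensional norm equivalence and Minkowski, and indeed the paper simply cites \cite{DW} for the full argument.
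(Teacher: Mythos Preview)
Your proof is correct and complete. The paper does not actually supply a proof of this lemma; it simply cites \cite[Lemma 2.23]{DW}, as you yourself observe at the end of your proposal. Your three-step argument---identify $\varphi_{\rho}^{1/\rho}$ as the Schatten $\rho$-norm on $\mathbf{u}(r)$, invoke finite-dimensional norm equivalence pointwise (with constants depending only on $r$ and $\rho$ because both the Schatten norm and the Frobenius norm $|\cdot|_{H_0}$ are unitarily invariant), and then lift to $L^{\rho}(M)$ via Minkowski---is exactly the standard proof behind that citation, so there is nothing to compare.
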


    \begin{lem}\label{lemma3.2}  Assume $\mu_r\geq 0$ and $\lambda_r \geq 0$. If $\varphi_{\rho}(\sqrt{-1}\vec{\mu})=\varphi_{\rho}(\sqrt{-1}\vec{\lambda})$ for all $\rho$ in some set $I \subset [1,\infty) $ possessing a limit point, then $\vec{\mu}=\vec{\lambda}$.
    \end{lem}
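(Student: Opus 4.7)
The plan is to reduce the identity $\varphi_{\rho}(\sqrt{-1}\vec{\mu})=\varphi_{\rho}(\sqrt{-1}\vec{\lambda})$ to the purely scalar equation $\sum_{j=1}^{r}\mu_j^{\rho}=\sum_{j=1}^{r}\lambda_j^{\rho}$, valid under $\mu_r,\lambda_r\geq 0$, and then to recover the two vectors from this family of moment identities by a standard asymptotic extraction. Since $\sqrt{-1}\vec{\mu}$ has eigenvalues $\sqrt{-1}\mu_j$ and each $\mu_j\geq 0$, the definition of $\varphi_{\rho}$ gives $\varphi_{\rho}(\sqrt{-1}\vec{\mu})=\sum_j |\mu_j|^{\rho}=\sum_j\mu_j^{\rho}$, and similarly for $\vec{\lambda}$.

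First I would separate off any zero coordinates, which contribute nothing to either side for every $\rho\geq 1$, and work only with the strictly positive components. On the strictly positive part, the map $\rho\mapsto \sum_j\mu_j^{\rho}=\sum_j e^{\rho\log\mu_j}$ extends to an entire function of $\rho\in\mathbb{C}$, and similarly for the $\lambda_j$. The hypothesis says the entire function $\rho\mapsto\sum_j\mu_j^{\rho}-\sum_j\lambda_j^{\rho}$ vanishes on a set $I\subset[1,\infty)$ with a limit point, so by the identity theorem it vanishes for all $\rho\in[1,\infty)$.

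Next I would exploit the asymptotic $\lim_{\rho\to\infty}\bigl(\sum_j\mu_j^{\rho}\bigr)^{1/\rho}=\max_j\mu_j$, and the analogue for $\vec{\lambda}$, to conclude $\max_j\mu_j=\max_j\lambda_j=:M$. Dividing both sides by $M^{\rho}$ and letting $\rho\to\infty$, every term with $\mu_j<M$ or $\lambda_j<M$ disappears, and what remains is $\#\{j:\mu_j=M\}=\#\{j:\lambda_j=M\}$. Thus the largest value and its multiplicity coincide in the two vectors; subtracting those matching terms reduces the identity to one involving strictly fewer positive entries, and the claim follows by induction on the number of distinct positive values.

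The only genuine content is this asymptotic-plus-induction step; the analytic continuation is routine once zero coordinates are split off, and no serious obstacle appears. The hypothesis $\mu_r,\lambda_r\geq 0$ is exactly what lets us identify $\varphi_{\rho}(\sqrt{-1}\vec{\mu})$ with a genuine $\ell^{\rho}$ sum of the $\mu_j$ themselves, so that the largest-entry extraction is available. Without this sign condition, $\varphi_{\rho}$ cannot tell $\mu_j$ from $-\mu_j$, and the conclusion would fail.
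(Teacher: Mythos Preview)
Your argument is correct. The reduction to $\sum_j\mu_j^{\rho}=\sum_j\lambda_j^{\rho}$ is immediate from the definition of $\varphi_{\rho}$ and the nonnegativity hypothesis; the analytic continuation via the identity theorem is legitimate because each $\mu_j^{\rho}=e^{\rho\log\mu_j}$ (for $\mu_j>0$) is entire in $\rho$; and the asymptotic extraction of the largest value together with its multiplicity, followed by induction, recovers the full multiset of positive entries. Since both vectors have the same length $r$ and are sorted in nonincreasing order, the zero entries also match automatically, giving $\vec{\mu}=\vec{\lambda}$.

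As for comparison with the paper: the paper does not supply its own proof of this lemma but simply refers the reader to \cite[Proposition 2.24]{DW}. Your write-up therefore adds value by being self-contained. The argument in \cite{DW} proceeds along essentially the same lines---analyticity of the power sums plus a peeling-off-the-maximum induction---so you are not taking a different route, only filling in what the present paper leaves to the reference.
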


\begin{cor}\label{cor2.6}
Let $A({t}_{i}^{})$ be a sequence of connections along the modified Yang-Mills flow with the Uhlenbeck limit ${A}_{\infty}^{}$. Then\\
$(1)$ $|\Lambda_{\omega}F_{A(t_i)}|_{H_0}\to |\Lambda_{\omega}F_{A_{\infty}}|_{H_0}$ strongly in ${L}_{}^{p}$ for all $1\leq p<\infty$,  and consequently, $\lim_{t_i\to\infty}\int_{M}{\left |\Lambda_{\omega}F_{A(t_i)}\right |}_{H_0}^{2}\frac{\omega^n}{n!}=\int_{M}{\left |\Lambda_{\omega}F_{A_{\infty}}\right |}_{H_0}^{2}\frac{\omega^n}{n!}$;\\
$(2)$ $\left\|\Lambda_{\omega}F_{A_{\infty}}\right\|_{L^{\infty}(H_0)}\leq \left\|\Lambda_{\omega}F_{A(t_j)}\right\|_{L^{\infty}(H_0)}\leq \left\|\Lambda_{\omega}F_{A(t_0)}\right\|_{L^{\infty}(H_0)}$ for $0\leq{t}_{0}\leq{t}_{j}$.
 \end{cor}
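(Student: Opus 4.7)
The plan has three stages: first establish the pointwise identity relating the modified Yang-Mills flow to the Hermitian-Yang-Mills flow, then prove (2) using the maximum principle and a lift-to-the-limit argument, and finally deduce (1) via dominated convergence.

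\medskip

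\textbf{Stage 1: Reduction to the HYM flow.} The key observation is that if $H(t)$ is the HYM-flow solution with $H(0)=H_0$ and $\sigma(t)$ is the complex gauge transformation with $\sigma^{*_{H_0}}(t)\sigma(t)=H_0^{-1}H(t)$ relating the two flows, then $\sqrt{-1}\Lambda_{\omega}F_{A(t)}$ and $\sqrt{-1}\Lambda_{\omega}F_{H(t)}$ are conjugate endomorphisms of $E$. The former is Hermitian with respect to $H_0$ and the latter with respect to $H(t)$, but their (real) eigenvalues coincide, giving the pointwise identity
\begin{equation*}
|\Lambda_{\omega}F_{A(t)}|_{H_0}(x) \;=\; |\Lambda_{\omega}F_{H(t)}|_{H(t)}(x),\qquad x\in M.
\end{equation*}
I will carry this identification out carefully since it is the only nontrivial algebraic step.

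\medskip

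\textbf{Stage 2: Proof of (2).} By Proposition \ref{Flow002} (with the substitution $\Phi(H)\leadsto \sqrt{-1}\Lambda_{\omega}F_H$ noted after \eqref{eqmc}), the function $|\sqrt{-1}\Lambda_{\omega}F_H|_H^2$ is a subsolution of $\bigl(\tfrac{\partial}{\partial t}-2\sqrt{-1}\Lambda_{\omega}\pbp\bigr)u\leq 0$, so the parabolic maximum principle yields $\sup_M|\sqrt{-1}\Lambda_{\omega}F_H|_H^2(t)\leq \sup_M|\sqrt{-1}\Lambda_{\omega}F_H|_H^2(t_0)$ for $t\geq t_0$. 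Translating via Stage 1 gives
\begin{equation*}
\|\Lambda_{\omega}F_{A(t_j)}\|_{L^{\infty}(H_0)}\leq \|\Lambda_{\omega}F_{A(t_0)}\|_{L^{\infty}(H_0)},\qquad 0\leq t_0\leq t_j.
\end{equation*}
To compare with the limit, I will use that by Theorem \ref{thm1.1} the convergence $A(t_i)\to A_{\infty}$ (modulo unitary gauge) is $C^{\infty}_{\rm loc}$ on $M\setminus\Sigma_{an}$, hence $|\Lambda_{\omega}F_{A(t_i)}|_{H_0}\to |\Lambda_{\omega}F_{A_{\infty}}|_{H_0}$ pointwise there. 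Passing the bound to the limit on $M\setminus\Sigma_{an}$ and noting that $\Sigma_{an}$ has Hausdorff codimension $\geq 4$ (so zero Lebesgue measure) gives $\|\Lambda_{\omega}F_{A_{\infty}}\|_{L^{\infty}(H_0)}\leq \|\Lambda_{\omega}F_{A(t_j)}\|_{L^{\infty}(H_0)}$.

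\medskip

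\textbf{Stage 3: Proof of (1) via dominated convergence.} Combining the pointwise a.e.\ convergence of $|\Lambda_{\omega}F_{A(t_i)}|_{H_0}$ to $|\Lambda_{\omega}F_{A_{\infty}}|_{H_0}$ (Stage 2) with the uniform pointwise bound
\begin{equation*}
|\Lambda_{\omega}F_{A(t_i)}|_{H_0}^p \;\leq\; \|\Lambda_{\omega}F_{A(0)}\|_{L^{\infty}(H_0)}^p,
\end{equation*}
which follows from the monotonicity in (2), the dominated convergence theorem on the compact manifold $M$ gives $L^p$ convergence for every $1\leq p<\infty$. Specialising to $p=2$ yields the stated convergence of $L^2$ integrals.

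\medskip

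The only genuine obstacle is Stage 1. The monotonicity and the limit bound in (2), and the dominated-convergence argument in (3), are routine once the conjugation identity is in place. I will write the proof to state and verify Stage 1 in detail, then present Stages 2 and 3 compactly.
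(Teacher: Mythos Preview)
Your proposal is correct and follows the standard route. The paper itself does not give a detailed argument: immediately after stating the corollary it writes only ``The proof of the above corollary is the same as in K\"ahler case (\cite[Corollary 3.12]{LZ0}).'' Your three stages (conjugation identity $|\Lambda_\omega F_{A(t)}|_{H_0}=|\Lambda_\omega F_{H(t)}|_{H(t)}$, parabolic maximum principle for $|\theta_H|_H^2$ from \eqref{eqmc}/\eqref{bdK}, and dominated convergence using $C^\infty_{loc}$ convergence off $\Sigma_{an}$) are exactly the ingredients underlying that cited Kähler argument, so your write-up simply makes explicit what the paper leaves to the reference.
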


    \begin{rem}
     The proof of the above corollary is the same as in K\"ahler case (\cite[Corollary 3.12]{LZ0}).
    \end{rem}

     Note that we can approximate $\varphi_{\rho}$ by smooth convex ad-invariant functions $\varphi_{\rho,\zeta}\to\varphi_{\rho}$. By direct calculation (\cite[Proposition 2.25]{DW}), there holds that
    \begin{equation}\label{102}
            (2\sqrt{-1}\Lambda_{\omega}\partial\overline{\partial}-\frac{\partial}{\partial t})\varphi_{\rho,\zeta}(\Lambda_{\omega}F_{A(t)}-\sqrt{-1}N \mathrm{Id}_E)\geq 0.
    \end{equation}
    So one can see that $t \mapsto \mathrm{HYM}_{\rho,N}(A(t))$ is nonincreasing. Then in conjunction with Lemma \ref{lemma3.1} and Corollary \ref{cor2.6}, this shows that
   \begin{prop}\label{prop3.3}Let $A(t)$ be a solution of the flow \eqref{modified flow} and $A_\infty$ be a subsequential Uhlenbeck limit of $A(t)$. Then for any $\rho \geq 1$ and any $N$, $\mathrm{lim}_{t\to \infty}\mathrm{HYM}_{\rho,N}(A(t))=\mathrm{HYM}_{\rho,N}(A_\infty)$.
    \end{prop}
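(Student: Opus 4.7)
The plan is to combine a monotonicity property of $t\mapsto \mathrm{HYM}_{\rho,N}(A(t))$ with a subsequential identification of the limit. The existence of the limit will come from monotonicity along the flow, while the identification of that limit with $\mathrm{HYM}_{\rho,N}(A_\infty)$ will come from Corollary \ref{cor2.6} together with the smooth convergence away from the bubbling set in Theorem \ref{thm1.1}. The main obstacle is not in either step individually, but rather in handling the possible jump between the subsequential Uhlenbeck limit (which sees only the good locus $M\setminus \Sigma_{an}$) and the global monotone limit: we must verify that no energy of $\varphi_\rho(\Lambda_\omega F_{A(t_i)}-\sqrt{-1}N\mathrm{Id}_E)$ escapes to $\Sigma_{an}$ in the limit.

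First, starting from the pointwise inequality \eqref{102}, I would integrate against $\frac{\omega^n}{n!}$ on $M$ and use the Gauduchon (and astheno-K\"ahler) assumption on $\omega$ to discard the $2\sqrt{-1}\Lambda_\omega\pbp$-term (the integral of an $\sqrt{-1}\pbp$-exact form against $\omega^{n-1}$ vanishes). This shows $\frac{d}{dt}\mathrm{HYM}_{\rho,\zeta,N}(A(t))\leq 0$, where $\mathrm{HYM}_{\rho,\zeta,N}$ is built from the smooth convex approximation $\varphi_{\rho,\zeta}$. Sending $\zeta\to 0$ and using uniform convergence $\varphi_{\rho,\zeta}\to \varphi_\rho$ on bounded sets of $\mathbf{u}(r)$ together with the $L^\infty$ bound in Corollary \ref{cor2.6}(2), I conclude that $t\mapsto \mathrm{HYM}_{\rho,N}(A(t))$ is nonincreasing, hence $\lim_{t\to\infty}\mathrm{HYM}_{\rho,N}(A(t))$ exists.

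Next I would identify this limit along the Uhlenbeck subsequence $\{t_i\}$. Corollary \ref{cor2.6}(1) gives strong $L^p$-convergence of $|\Lambda_\omega F_{A(t_i)}|_{H_0}$ to $|\Lambda_\omega F_{A_\infty}|_{H_0}$ for every $1\leq p<\infty$. Theorem \ref{thm1.1} gives $C^\infty_{loc}$-convergence $A(t_i)\to A_\infty$ outside the closed set $\Sigma_{an}$ of Hausdorff codimension at least $4$; in particular $\Lambda_\omega F_{A(t_i)}\to \Lambda_\omega F_{A_\infty}$ pointwise almost everywhere on $M$. The $L^p$-norm convergence together with a.e.~convergence upgrades to strong $L^p$-convergence of $\Lambda_\omega F_{A(t_i)}$ itself (by a Brezis–Lieb / uniform integrability argument, using the uniform $L^\infty$ bound from Corollary \ref{cor2.6}(2)). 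Then, since $\varphi_\rho$ is a continuous convex function on $\mathbf u(r)$ satisfying the norm equivalence in Lemma \ref{lemma3.1}, composition and dominated convergence (justified again by the uniform $L^\infty$ bound) yield
\begin{equation}
\int_M \varphi_\rho\bigl(\Lambda_\omega F_{A(t_i)}-\sqrt{-1}N\Id_E\bigr)\frac{\omega^n}{n!}\ \longrightarrow\ \int_M \varphi_\rho\bigl(\Lambda_\omega F_{A_\infty}-\sqrt{-1}N\Id_E\bigr)\frac{\omega^n}{n!},
\end{equation}
that is, $\mathrm{HYM}_{\rho,N}(A(t_i))\to \mathrm{HYM}_{\rho,N}(A_\infty)$.

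Finally, the monotone (hence convergent) sequence $\mathrm{HYM}_{\rho,N}(A(t))$ has a unique accumulation point, namely $\mathrm{HYM}_{\rho,N}(A_\infty)$, so $\lim_{t\to\infty}\mathrm{HYM}_{\rho,N}(A(t))=\mathrm{HYM}_{\rho,N}(A_\infty)$, as required. The delicate step is controlling the contribution near $\Sigma_{an}$: the uniform $L^\infty$-bound on $\Lambda_\omega F_{A(t)}$ from Corollary \ref{cor2.6}(2) and the smallness of $\Sigma_{an}$ (Hausdorff codimension $\geq 4$, hence Lebesgue measure zero) are exactly what is needed to rule out energy concentration there in the $L^1$-limit of $\varphi_\rho(\Lambda_\omega F_{A(t_i)}-\sqrt{-1}N\Id_E)$.
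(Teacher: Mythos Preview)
Your argument is correct and follows essentially the same approach as the paper: monotonicity of $t\mapsto \mathrm{HYM}_{\rho,N}(A(t))$ from \eqref{102} together with the Gauduchon condition, and identification of the limit along the Uhlenbeck subsequence via Corollary \ref{cor2.6} and Lemma \ref{lemma3.1}. Your write-up is in fact more detailed than the paper's terse justification; the one slight over-complication is the Brezis--Lieb step, since the uniform $L^\infty$ bound from Corollary \ref{cor2.6}(2) together with a.e.\ convergence off the null set $\Sigma_{an}$ already gives dominated convergence of $\varphi_\rho(\Lambda_\omega F_{A(t_i)}-\sqrt{-1}N\Id_E)$ directly.
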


    Next we are going to show that the HN-type of the limiting sheaf is the same as that of the initial bundle.

    \begin{thm}\label{consistence of HN type}
    Let $(E,\overline{\partial}_{A_0})$ be a holomorphic vector bundle over a compact Gauduchon manifold $(M,\omega)$, and $A(t)$ be the smooth solution of the modified Yang-Mills flow (\ref{modified flow}) on the Hermitian vector bundle $(E,{H}_{0}^{})$ with initial data ${A}_{0}^{}\in \mathcal{A}_{H_0}^{1,1}$. Let ${A}_{\infty}^{}$ be an Uhlenbeck limit of $A(t)$, and $({E}_{\infty}^{},A_{\infty})$ be the corresponding reflexive sheaf as in Theorem \ref{thm1.1}. Then
    \begin{equation}
           {\mathrm{HYM}}_{\rho,N}^{}({A}_{\infty}^{})=\lim_{t \to \infty}{{\mathrm{HYM}}_{\rho,N}^{}(A(t))}={\mathrm{HYM}}_{\rho,N}^{}({\vec{\mu}_{\omega}}(E))
    \end{equation}
    for all $1\leq\rho \leq 2$ and all N $\in\mathbb{R}$; and the HN-type of $({E}_{\infty}^{},{A}_{\infty}^{})$ is the same as that of $(E,{A}_{0}^{})$.
    \end{thm}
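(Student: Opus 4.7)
The plan is to reduce the statement to Theorem \ref{thm3} via the gauge equivalence between the modified Yang-Mills flow $A(t)$ and the Hermitian-Yang-Mills flow metric $H(t)$, and then invoke Lemma \ref{lemma3.2} to promote an equality of HYM functionals to an equality of Harder-Narasimhan types. Since $A(t)=\sigma(t)(A_0)$ with $\sigma^{\ast_{H_0}}(t)\sigma(t)=H_0^{-1}H(t)$, the skew-Hermitian endomorphism $\sqrt{-1}\Lambda_{\omega}F_{A(t)}$ is pointwise conjugate to the mean curvature $\theta_{H(t)}=\sqrt{-1}\Lambda_{\omega}F_{H(t)}$; hence its descending eigenvalue vector is exactly $\vec{\lambda}(H(t),\omega)$, and
\[
\mathrm{HYM}_{\rho,N}(A(t)) \;=\; \int_M\sum_{j=1}^r |\lambda_j(H(t),\omega)+N|^{\rho}\,\frac{\omega^n}{n!}.
\]

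Next I would apply Theorem \ref{thm3}, which (using the normalization $\mathrm{Vol}(M,\omega)=2\pi$) gives $\vec{\lambda}(H(t),\omega)\to \vec{\mu}_{\omega}(E)$ in $L^2(M,\omega)$ along the full flow (not just a subsequence). The function $x\mapsto \sum_{j=1}^r |x_j+N|^{\rho}$ is continuous on $\mathbb{R}^r$ and, for $\rho\leq 2$, of at most quadratic growth; combined with the uniform $L^{\infty}$ bound on $|\theta_{H(t)}|_{H(t)}$ from (\ref{bdK}), the $L^2$ convergence upgrades to $L^1$ convergence of the integrand, so
\[
\lim_{t\to\infty}\mathrm{HYM}_{\rho,N}(A(t)) \;=\; 2\pi\,\varphi_{\rho}(\sqrt{-1}(\vec{\mu}_{\omega}(E)+\vec{N})) \;=\; \mathrm{HYM}_{\rho,N}(\vec{\mu}_{\omega}(E)),
\]
where in the middle equality I used that the constant vector $\vec{\mu}_{\omega}(E)$ integrates trivially against $\omega^n/n!$. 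Combined with Proposition \ref{prop3.3}, this yields the chain of equalities claimed in the theorem.

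For the HN-type assertion, recall that $D_{A_\infty}\Lambda_{\omega}F_{A_\infty}=0$ implies $\sqrt{-1}\Lambda_\omega F_{A_\infty}$ has constant descending eigenvalue vector $\vec{\lambda}_{\infty}$ on $M\setminus\Sigma_{an}$, so $\mathrm{HYM}_{\rho,N}(A_\infty)=2\pi\,\varphi_{\rho}(\sqrt{-1}(\vec{\lambda}_\infty+\vec N))$. Choosing $N$ large enough that both $\vec{\mu}_{\omega}(E)+\vec{N}$ and $\vec{\lambda}_{\infty}+\vec{N}$ have all non-negative entries (possible since both vectors are bounded), the identity just established together with Lemma \ref{lemma3.2}, applied on the interval $[1,2]$, forces $\vec{\lambda}_{\infty}=\vec{\mu}_{\omega}(E)$, i.e.\ the HN-type of $(E_\infty,A_\infty)$ agrees with that of $(E,A_0)$.

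The main point to verify carefully is that the $L^2$ convergence provided by Theorem \ref{thm3} is strong enough to conclude convergence of the nonlinear functional $\mathrm{HYM}_{\rho,N}(A(t))$ for every $\rho\in[1,2]$; this works precisely because the eigenvalues of $\theta_{H(t)}$ are uniformly bounded along the full Hermitian-Yang-Mills flow, so a dominated-convergence argument goes through. This is the key advantage of our approach over the K\"ahler case argument in \cite{DW}, which relied on discreteness of slopes of coherent subsheaves and is not available in the non-K\"ahler setting.
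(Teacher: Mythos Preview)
Your proposal is correct and follows essentially the same route as the paper: both arguments combine the gauge equivalence $\mathrm{HYM}_{\rho,N}(A(t))=\mathrm{HYM}_{\rho,N}(D_{\overline{\partial}_{A_0},H(t)})$, Theorem \ref{thm3} for the $L^2$ convergence of $\vec{\lambda}(H(t),\omega)$ to $\vec{\mu}_\omega(E)$, Proposition \ref{prop3.3} for the Uhlenbeck limit side, and Lemma \ref{lemma3.2} for the final identification of HN-types. The only cosmetic difference is that the paper passes from $L^2$ convergence of eigenvalues to convergence of $\mathrm{HYM}_{\rho,N}$ via the norm equivalence of Lemma \ref{lemma3.1} and a triangle-inequality $\delta/2$ argument, whereas you invoke the uniform $L^\infty$ bound on $\theta_{H(t)}$ and Lipschitz continuity of $x\mapsto\sum|x_j+N|^\rho$ on bounded sets; these are interchangeable.
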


    \begin{proof}[Proof of Theorem \ref{consistence of HN type}]
    Suppose $H(t)$ is the solution of the Hermitian-Yang-Mills flow (\ref{Flow}) with the initial condition $H_0$ and $D_{\overline{\partial}_{A_0},H(t)}$ is the Chern connection on $(E,\overline{\partial}_{A_0})$ with respect to $H(t)$. Fix $1\leq\rho\leq 2$, obviously there holds
    \begin{equation}\label{ineqHYM}
    \begin{split}
          &{\left| \mathrm{HYM}_{\rho,N}(\vec{\lambda}_{\infty})^{\frac{1}{\rho}}-\mathrm{HYM}_{\rho,N}(\vec{\mu}_{\omega}(E))^{\frac{1}{\rho}}\right|}\\
          \leq & {\left| \mathrm{HYM}_{\rho,N}(A_{\infty})^{\frac{1}{\rho}}-\mathrm{HYM}_{\rho,N}(A(t))^{\frac{1}{\rho}}\right|}\\
               &+{\left|\mathrm{HYM}_{\rho,N}(D_{\overline{\partial}_{A_0},H(t)})^{\frac{1}{\rho}}-\mathrm{HYM}_{\rho,N}(\vec{\mu}_{\omega}(E))^{\frac{1}{\rho}}\right|},\\
    \end{split}
    \end{equation}where we have used the fact
    \begin{equation}
           \mathrm{HYM}_{\rho,N}(A_{\infty})=\mathrm{HYM}_{\rho,N}(\vec{\lambda}_{\infty})\quad \text{and} \quad \mathrm{HYM}_{\rho,N}(A(t))=\mathrm{HYM}_{\rho,N}(D_{\overline{\partial}_{A_0},H(t)}).
    \end{equation} Note that
    \begin{equation}
    \begin{split}
           &{\left|\mathrm{HYM}_{\rho,N}(D_{\overline{\partial}_{A_0},H(t)})^{\frac{1}{\rho}}-\mathrm{HYM}_{\rho,N}(\vec{\mu}_{\omega}(E))^{\frac{1}{\rho}}\right|}\\
           = & \left|\left(\int_M \varphi_\rho(\sqrt{-1}(\vec{\lambda}(H(t),\omega)+\vec{N}))\frac{\omega^n}{n!}\right)^{\frac{1}{\rho}}-\left(\int_M \varphi_\rho(\sqrt{-1}(\vec{\mu}_\omega(E)+\vec{N}))\frac{\omega^n}{n!}\right)^{\frac{1}{\rho}}\right|\\
           \leq & \left(\int_M\varphi_\rho(\sqrt{-1}(\vec{\lambda}(H(t),\omega)-\vec{\mu}_\omega(E)))\frac{\omega^n}{n!}\right)^{\frac{1}{\rho}}\\
           \leq & C(\rho,r)\|\vec{\lambda}(H(t),\omega)-\vec{\mu}_\omega(E)\|_{L^\rho(M,\omega)},\\
    \end{split}
    \end{equation}
 where $C(\rho,r)$ is a constant depending only on $r$ and $\rho$. For any $\delta>0$, by Theorem \ref{thm3}, we get
    \begin{equation}
    \begin{split}
           &{\left|\mathrm{HYM}_{\rho,N}(D_{\overline{\partial}_{A_0},H(t)})^{\frac{1}{\rho}}-\mathrm{HYM}_{\rho,N}(\vec{\mu}_{\omega}(E))^{\frac{1}{\rho}}\right|}\leq \frac{\delta}{2}\\
    \end{split}
    \end{equation}when $t$ is sufficiently large. Proposition \ref{prop3.3} gives us that
    \begin{equation}
           {\left| \mathrm{HYM}_{\rho,N}(A_{\infty})^{\frac{1}{\rho}}-\mathrm{HYM}_{\rho,N}(A(t))^{\frac{1}{\rho}}\right|}\leq \frac{\delta}{2}
    \end{equation}when $t$ is large enough and therefore
    \begin{equation}
           {\left| \mathrm{HYM}_{\rho,N}(\vec{\lambda}_{\infty})^{\frac{1}{\rho}}-\mathrm{HYM}_{\rho,N}(\vec{\mu}_{\omega}(E))^{\frac{1}{\rho}}\right|}\leq \delta.
    \end{equation}Taking $\delta\to 0$, we deduce
    \begin{equation}
           \mathrm{HYM}_{\rho,N}(\vec{\lambda}_{\infty})=\mathrm{HYM}_{\rho,N}(\vec{\mu}_{\omega}(E))
    \end{equation}for any $1\leq\rho \leq 2$ and all $N\in\mathbb{R}$. Of course, this says that $\varphi_\rho(\sqrt{-1}(\vec{\lambda}_\infty+\vec{N}))=\varphi_\rho(\sqrt{-1}(\vec{\mu}_\omega(E)+\vec{N}))$ for all $1\leq \rho\leq 2$ and all $N\in\mathbb{R}$. We can choose $N$ large enough such that $\mu_{r,\omega}+N\geq 0$ and $\lambda_{r,\infty}+N\geq 0$. Then due to Lemma \ref{lemma3.2}, one can conclude $\vec{\lambda}_\infty=\vec{\mu}_\omega(E)$.
    \end{proof}

Let $H(t)$ be the long time solution of the Hermitian-Yang-Mills flow (\ref{Flow}) with the initial metric $H_0$ and $A(t)$ the long time solution of the modified Yang-Mills flow (\ref{modified flow}) with $A_0\in \mathcal{A}_{H_0}^{1,1}$. Then we know $A(t)= \sigma(t)(A_0)$, where $\sigma(t)$ is a family of complex gauge transformations satisfying $\sigma^{\ast_{H_0}}(t)\sigma(t)= H_0^{-1}H(t)$. Consider the following HN-filtration of $(E,\overline{\partial}_{A_0})$ by saturated subsheaves
\begin{equation}
0=\mathcal{E}_0\subset \mathcal{E}_1\subset \cdots \subset \mathcal{E}_l=(E,\overline{\partial}_{A_0}).
\end{equation}
Suppose $\kappa_{\alpha}^{H(t)}$ is the orthogonal projection onto $\mathcal{E}_\alpha$ with respect to $H(t)$ and set $\kappa_{\alpha}^{(t)}=\sigma(t)\circ \kappa_{\alpha}^{H(t)} \circ \sigma^{-1}(t)$. Then $\kappa_{\alpha}^{(t)}$ is the orthogonal projection onto the subsheaf $\sigma(t)(\mathcal{E}_\alpha)$ and $\{\kappa_{\alpha}^{(t)}\}$ is the HN-filtration of $(E,\overline{\partial}_{A(t)})$. It is easy to check that: $(\kappa_{\alpha}^{(t)})^2=\kappa_{\alpha}^{(t)}=(\kappa_{\alpha}^{(t)})^{\ast_{H_0}}$; $(\Id - \kappa_{\alpha}^{(t)})\overline{\partial}_{A(t)}\kappa_{\alpha}^{(t)}=0$; $|\overline{\partial}_{A(t)}\kappa_{\alpha}^{(t)}|_{H_0}=|\overline{\partial}_{A_0}\kappa_{\alpha}^{H(t)}|_{H(t)}$. Moreover, one can see that $\kappa_{\alpha}^{(t)}\in L_1^2(\End(E))$.

Under the same condition as in Theorem \ref{thm1.1}, assuming $A_{\infty}$ is the Uhlenbeck limit of $A(t_j)$, we know there is a sequence of unitary gauge transformations $\{a_j\}$ such that the connections $A_j:=a_j(A(t_j))$ converge to $A_{\infty}$ in $C^{\infty}_{loc}$-topology off $\Sigma_{an}$ as $j\to \infty$. Then $\kappa^{j}_{\alpha}:=a_j\circ \kappa^{(t_j)}_{\alpha}\circ a_j^{-1}$ is the orthogonal projection onto $a_j\circ\sigma(t_j)(\mathcal{E}_{\alpha})$ with respect to $H_0$. By \cite[Lemma 3.13]{LZ1} (also see \cite[Lemma 4.5]{DW}), we have the following lemma.
    \begin{lem}\label{lemma3.4}  Suppose $({E}_{\infty}, A_{\infty})$ is the corresponding reflexive sheaf as in Theorem \ref{thm1.1}.
                          \\ $(1)$ Let $\{ {\kappa}_{\alpha}^{\infty} \}$ be the HN-filtration of the reflexive sheaf $({E}_{\infty}^{},\overline{\partial}_{A_\infty})$. Then there is a subsequence of HN-filtration $\{ {\kappa}_{\alpha}^{j} \}$ converging to $\{ {\kappa}_{\alpha}^{\infty} \}$ strongly in ${L}^{p}\cap{L}_{1,loc}^{2}$ off ${\Sigma}_{an}$ as $j\rightarrow +\infty$ for all $1 \leq p < \infty$.
                          \\ $(2)$ Assume $(E,\overline{\partial}_{A_0})$ is semi-stable and $\{\kappa_{ss,\alpha}^{j}\}$ is a Seshadri filtration of $(E,\overline{\partial}_{A_j})$.Without loss of generality, suppose the ranks of $\kappa_{ss,\alpha}^{j}$ are constant for all $j$. Then there is a filtration $\{ {\kappa}_{ss,\alpha}^{\infty} \}$ of $({E}_{\infty},\overline{\partial}_{A_\infty})$ such that  after passing to a subsequence, $\{\kappa_{ss,\alpha}^{j}\}$ converges to $\{ {\kappa}_{ss,\alpha}^{\infty} \}$ strongly in ${L}^{p}\cap{L}_{1,loc}^{2}$ off ${\Sigma}_{an}$ as $j\rightarrow +\infty$ for all $1 \leq p < \infty$, the rank and degree of ${\kappa}_{ss,\alpha}^{\infty}$ are equal to the rank and degree of ${\kappa}_{ss,\alpha}^{j}$ for all $\alpha$ and j.
    \end{lem}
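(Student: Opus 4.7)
\medskip

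\noindent\textbf{Proof proposal for Lemma \ref{lemma3.4}.}
The plan is to follow the general line of \cite[Lemma 4.5]{DW} and \cite[Lemma 3.13]{LZ1}, with the key new input being Theorem \ref{consistence of HN type} which matches the Harder--Narasimhan type of the limit with that of the initial bundle. I first establish a uniform $L^2_1$ bound on the projections $\kappa_\alpha^j$, then extract a weak limit by compactness, verify that this limit is a weakly holomorphic projection onto a coherent subsheaf of $(E_\infty,\bar\partial_{A_\infty})$, and finally identify this subsheaf with the corresponding piece of the HN-filtration of $E_\infty$.

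For the uniform bound, I would apply the Chern--Weil formula (\ref{B1}) to each $\kappa_\alpha^j$ (noting that $\kappa_\alpha^j$ is the orthogonal projection with respect to $H_0$ onto a saturated subsheaf of $(E,\bar\partial_{A_j})$ of the same rank and degree as $\mathcal{E}_\alpha\subset (E,\bar\partial_{A_0})$, since unitary gauge transformations preserve rank and degree). This yields
\begin{equation}
\int_M|\bar\partial_{A_j}\kappa_\alpha^j|_{H_0}^2\frac{\omega^n}{n!}=\int_M\sqrt{-1}\tr(\kappa_\alpha^j\Lambda_\omega F_{A_j})\frac{\omega^n}{n!}-2\pi\deg_\omega(\mathcal{E}_\alpha),
\end{equation}
and the right-hand side is uniformly bounded in $j$ thanks to Corollary \ref{cor2.6}(2). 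Since $\kappa_\alpha^j$ itself is pointwise bounded, this gives a uniform $L^2_1(\End(E),H_0)$ bound.

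Next, by the Banach--Alaoglu and Rellich--Kondrachov theorems, after passing to a subsequence $\kappa_\alpha^j$ converges weakly in $L^2_1$ and strongly in every $L^p$ ($p<\infty$) to some $\widetilde\kappa_\alpha\in L^2_1(\End(E))$; on any compact subset of $M\setminus\Sigma_{an}$, where $A_j\to A_\infty$ smoothly, the convergence is strong in $L^2_{1,\mathrm{loc}}$. The algebraic identities $\widetilde\kappa_\alpha^2=\widetilde\kappa_\alpha=\widetilde\kappa_\alpha^{\ast_{H_0}}$ pass to the limit from the $L^p$ convergence, and the holomorphicity condition $(\Id-\widetilde\kappa_\alpha)\bar\partial_{A_\infty}\widetilde\kappa_\alpha=0$ follows by passing to the limit (off $\Sigma_{an}$) in the corresponding equation for $\kappa_\alpha^j$. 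By the Uhlenbeck--Yau regularity theorem for weakly holomorphic projections, $\widetilde\kappa_\alpha$ then defines a coherent saturated subsheaf $\widetilde{\mathcal{E}}_\alpha\subset (E_\infty,\bar\partial_{A_\infty})$, and a standard rank-semicontinuity argument combined with strong $L^p$ convergence shows $\rank\widetilde{\mathcal{E}}_\alpha=\rank\mathcal{E}_\alpha$.

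The decisive step, which is the main obstacle, is identifying $\widetilde{\mathcal{E}}_\alpha$ with the $\alpha$-th piece $\mathcal{E}_\alpha^\infty$ of the HN-filtration of $(E_\infty,\bar\partial_{A_\infty})$. Here I would compute $\deg_\omega(\widetilde{\mathcal{E}}_\alpha)$ via the Chern--Weil formula applied to $\widetilde\kappa_\alpha$, pass to the limit in
\begin{equation}
2\pi\deg_\omega(\widetilde{\mathcal{E}}_\alpha)=\lim_{j\to\infty}\left(\int_M\sqrt{-1}\tr(\kappa_\alpha^j\Lambda_\omega F_{A_j})\frac{\omega^n}{n!}-\int_M|\bar\partial_{A_j}\kappa_\alpha^j|_{H_0}^2\frac{\omega^n}{n!}\right),
\end{equation}
and use Corollary \ref{cor2.6}(1) together with strong $L^p$ convergence of $\kappa_\alpha^j$ and lower semicontinuity of the $L^2$ norm of the derivative to obtain $\deg_\omega(\widetilde{\mathcal{E}}_\alpha)\leq\deg_\omega(\mathcal{E}_\alpha)$. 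By Theorem \ref{consistence of HN type}, the HN-type of $(E_\infty,\bar\partial_{A_\infty})$ coincides with $\vec\mu_\omega(E)$, so the ranks and slopes of the HN-filtration of $E_\infty$ agree with those of $\{\mathcal{E}_\alpha\}$. A maximal-destabilizing argument then forces equality of degrees and identifies $\widetilde{\mathcal{E}}_\alpha$ with $\mathcal{E}_\alpha^\infty$ by uniqueness of the HN-filtration; this in turn upgrades the lower semicontinuity of $\|\bar\partial\widetilde\kappa_\alpha\|_{L^2}^2$ to strong convergence, which yields the claimed $L^2_{1,\mathrm{loc}}$ convergence off $\Sigma_{an}$.

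For part (2), since $(E,\bar\partial_{A_0})$ is semistable, every $\kappa_{ss,\alpha}^j$ corresponds to a saturated subsheaf of the same slope as $E$. The same $L^2_1$-compactness argument produces subsequential limits $\kappa_{ss,\alpha}^\infty$, and the Chern--Weil degree computation together with Theorem \ref{consistence of HN type} (which in the semistable case forces all slopes to coincide) shows each limit has the same rank and degree as the corresponding $\kappa_{ss,\alpha}^j$; uniqueness of the Seshadri filtration is \emph{not} required, only its existence on the limit side, so no contradiction with non-uniqueness arises.
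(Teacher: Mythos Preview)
Your proposal is correct and essentially reproduces the argument of \cite[Lemma 4.5]{DW} and \cite[Lemma 3.13]{LZ1}, which is precisely what the paper invokes here (the paper does not supply its own proof of this lemma, only citing these sources). Your identification of Theorem \ref{consistence of HN type} as the input replacing the K\"ahler-specific discreteness step is exactly right: once the HN-type of $(E_\infty,\bar\partial_{A_\infty})$ is known to coincide with $\vec\mu_\omega(E)$, the Chern--Weil/compactness/identification argument carries over verbatim from those references.
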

\subsection{The non-zero holomorphic map}

   In this subsection, we show the existence of a non-zero holomorphic map under some assumptions, which is crucial to construct the nonzero holomorphic map from the graded sheaf to the limiting sheaf.

Assume that $\pi:\tilde{M}\to M$ is a blow-up with smooth center $\Sigma$. From \cite{Vhodge}, one can see there exists a holomorphic line bundle $L$ over $\tilde{M}$ with respect to the divisor $-\pi^{-1}(\Sigma)$ such that the $(1,1)$-form $\pi^{\ast}\omega+\tilde{\delta}\sqrt{-1}F_{H_L}$ is positive for some small $\tilde{\delta}$, where $\sqrt{-1}F_{H_L}$ is the Chern form with respect to some Hermitian metric $H_L$ on $L$. Set $\eta=\pi^{\ast}\omega+\tilde{\delta}\sqrt{-1}F_{H_L}$ and $\omega_{\epsilon}=\pi^{\ast}\omega+\epsilon\eta$ for $0<\epsilon\leq 1$. Then we get a family of Hermitian metrics $\{\omega_\epsilon\}$ on $\tilde{M}$. Moreover, if $\omega$ is K\"ahler, then $\eta$ is K\"ahler and so is $\omega_\epsilon$. For a finite sequence of blow-ups $\pi_i:M_i\to M_{i-1}$ $(i=1,2,\cdots,q)$ with $M_0=M$, set
\begin{equation}\label{construction 2.2}
   \omega_{i,\epsilon}=\pi_i^{\ast}\omega_{i-1,\epsilon}+\epsilon_i\eta_i,
\end{equation}
where each $\eta_i$ is a Hermitian metric on $M_i$, $0<\epsilon_i\leq 1$ and $\omega_{0,\epsilon}=\omega$.

Bando and Siu (\cite{BS}) derived a uniform Sobolev inequality for a blow-up with the smooth center. Their argument does not rely on the K\"ahler condition. We include the result here for completeness.

\begin{lem}[\cite{BS}]\label{lemma2.1}
Let $(M,\omega)$ be an $n$-dimensional compact Hermitian manifold and $\pi:\tilde{M}\to M$ a blow-up with non-singular center. Fix an arbitrary Hermitian metric $\theta$ on $\tilde{M}$ and set $\omega_{\epsilon}=\pi^{\ast}\omega+\epsilon\theta$ for $0<\epsilon\leq 1$. Then there exists a uniform constant $C_S(\omega)$ such that
\begin{equation}
(\int_{\tilde{M}}|f|^{\frac{2n}{2n-1}}\frac{\omega_{\epsilon}^n}{n!})^{\frac{2n-1}{n}}\leq C_S(\omega)(\int_{\tilde{M}}(|\mathrm{d}f|_{\omega_{\epsilon}}^2+|f|^2)\frac{\omega_{\epsilon}^n}{n!}),
\end{equation}for all $f\in C^1(\tilde{M})$ and all $0<\epsilon\leq 1$.
\end{lem}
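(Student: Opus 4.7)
The plan is to exploit that the stated exponent $\frac{2n}{2n-1}$ is strictly less than $2$, which makes the target inequality subcritical. The proof then reduces to two simple ingredients: a uniform bound on $\Vol(\tilde{M},\omega_{\epsilon})$ and a H\"older step.

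First I would establish the uniform volume bound. Expanding
\begin{equation*}
\omega_{\epsilon}^{n} = (\pi^{*}\omega+\epsilon\theta)^{n} = \sum_{k=0}^{n}\binom{n}{k}\epsilon^{k}(\pi^{*}\omega)^{n-k}\wedge\theta^{k}
\end{equation*}
as a polynomial in $\epsilon\in(0,1]$ with integrable $(n,n)$-form coefficients on $\tilde{M}$, integrating and using $\epsilon\leq 1$ yields $\Vol(\tilde{M},\omega_{\epsilon})\leq V_{0}$ uniformly in $\epsilon$, with $V_{0}$ depending only on $(M,\omega)$ and $\theta$.

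Next, set $p=\frac{2n}{2n-1}<2$. H\"older's inequality on $(\tilde{M},\omega_{\epsilon})$ gives
\begin{equation*}
\Big(\int_{\tilde{M}}|f|^{p}\tfrac{\omega_{\epsilon}^{n}}{n!}\Big)^{1/p} \leq \Big(\int_{\tilde{M}}|f|^{2}\tfrac{\omega_{\epsilon}^{n}}{n!}\Big)^{1/2}\Vol(\tilde{M},\omega_{\epsilon})^{\frac{1}{p}-\frac{1}{2}}.
\end{equation*}
Raising to the power $2$ and using $p\cdot\frac{2n-1}{n}=2$, this becomes
\begin{equation*}
\Big(\int_{\tilde{M}}|f|^{\frac{2n}{2n-1}}\tfrac{\omega_{\epsilon}^{n}}{n!}\Big)^{\frac{2n-1}{n}} \leq V_{0}^{\frac{n-1}{n}}\int_{\tilde{M}}|f|^{2}\tfrac{\omega_{\epsilon}^{n}}{n!},
\end{equation*}
which is dominated by $C_{S}(\omega)\int_{\tilde{M}}(|\mathrm{d}f|_{\omega_{\epsilon}}^{2}+|f|^{2})\omega_{\epsilon}^{n}/n!$ with $C_{S}(\omega)=V_{0}^{(n-1)/n}$.

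There is in fact no substantive obstacle here: since the target exponent is subcritical, the gradient term on the right-hand side is not even needed, and the inequality collapses to H\"older plus the uniform volume bound. A stronger uniform Sobolev inequality at the critical Moser exponent $L^{2n/(n-1)}$ would instead require a partition-of-unity argument in adapted local coordinates near the exceptional divisor $\pi^{-1}(\Sigma)$, where $\omega_{\epsilon}$ genuinely degenerates as $\epsilon\to 0$; but no such local analysis is needed for the subcritical form stated in the lemma.
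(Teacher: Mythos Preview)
Your argument is correct for the inequality exactly as stated. Since $p=\frac{2n}{2n-1}<2$ for $n\geq 2$ (and $p=2$ when $n=1$), H\"older's inequality together with the uniform volume bound $\Vol(\tilde{M},\omega_{\epsilon})\leq\Vol(\tilde{M},\omega_{1})$ already yields the claim, and the gradient term on the right is superfluous. The volume bound is indeed immediate: $\omega_{\epsilon}\leq\omega_{1}$ as semipositive $(1,1)$-forms, hence $\omega_{\epsilon}^{n}\leq\omega_{1}^{n}$ pointwise. The paper does not supply its own proof of this lemma; it simply records the statement and refers to Bando--Siu.

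Your closing remark is the important observation here. The Bando--Siu argument is aimed at the \emph{critical} embedding $W^{1,2}\hookrightarrow L^{2n/(n-1)}$ in real dimension $2n$, uniformly in $\epsilon$; that is the inequality one actually needs to run the Moser iteration in Lemmas~\ref{lemma4.2} and~\ref{lemma4.3}, where each step must raise the integrability exponent. The subcritical form with exponent $\tfrac{2n}{2n-1}$ that you have proved, while matching the displayed statement, gives an iteration factor $\alpha=\tfrac{n}{2n-1}<1$ and so cannot bootstrap $L^{1}$ to $L^{\infty}$. In short: your proof is valid for what is written, and it is genuinely simpler than the Bando--Siu argument precisely because the stated exponent is weaker than the one those applications require; the nontrivial content---and the local analysis near the exceptional divisor that you allude to---lives in the critical case.
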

Now we present the following two lemmas without proof, as these results can be derived directly via the Moser iteration.
\begin{lem}\label{lemma4.2}
Let $(M,\omega)$ be an $n$-dimensional compact Hermitian manifold. Suppose the function $\phi\geq 0$ satisfies $\phi(x,t)\in C^{\infty}(M\setminus\Sigma\times[0,\infty))\cap L^{\infty}(M\setminus\Sigma\times[t_1,t_2])$ for arbitrary $t_2>t_1>0$, where $\Sigma$ is an analytic subset of complex codimension at least $2$. Assume $(2\sqrt{-1}\Lambda_\omega\partial\overline{\partial}-\frac{\partial}{\partial t})\phi\geq 0$, $\phi(0)\in L^1(M\setminus\Sigma,\omega)$ and $\|\phi(t)\|_{L^1(M\setminus\Sigma,\omega)}\leq e^{C_1(t-t_0)}\|\phi(t_0)\|_{L^1(M\setminus\Sigma,\omega)}$ for any $t\geq t_0\geq 0$, where $C_1$ is a positive constant. Then for any $T>s>0$,
\begin{equation}
\|\phi\|_{L^{\infty}(M\setminus\Sigma\times[s,T])}\leq\frac{C(C_S(\omega),n,T,C_1)}{s^{2n+1}}\|\phi(0)\|_{L^1(M\setminus\Sigma)},
\end{equation}
where $C(C_S(\omega),n,T,C_1)$ is a constant depending only on $n$, $T$, $C_1$ and the Sobolev constant $C_S(\omega)$.
\end{lem}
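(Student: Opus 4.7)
The plan is to run a parabolic Moser iteration on the open manifold $M\setminus\Sigma$, exploiting the codimension-four condition on $\Sigma$ to suppress cutoff errors at the singular set and the Sobolev inequality of Lemma \ref{lemma2.1} (taken with the trivial blow-up, so only the constant $C_S(\omega)$ enters) to propagate integrability. The operator $L=2\sqrt{-1}\Lambda_\omega\pbp$ is a second-order elliptic operator with smooth coefficients and no zero-order term, so the hypothesis $(L-\partial_t)\phi\geq 0$ is a classical uniformly parabolic subsolution inequality to which the standard Moser machinery applies once the singular set is handled.

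First I would establish the basic energy estimate with a spatial cutoff. Because $\Sigma$ has real codimension at least four, there exist smooth cutoff functions $\chi_\epsilon:M\to[0,1]$ equal to $1$ outside an $\epsilon$-neighborhood of $\Sigma$, vanishing near $\Sigma$, and satisfying $\int_M|\nabla\chi_\epsilon|^2\frac{\omega^n}{n!}\to 0$ as $\epsilon\to 0$. Testing $\partial_t\phi\leq L\phi$ against $\chi_\epsilon^2\eta(t)^2\phi^{p-1}$ for $p\geq 1$ and a smooth temporal cutoff $\eta$ supported in $[s',T]$, integrating by parts, and absorbing the first-order terms of $L$ via Cauchy-Schwarz yields, after letting $\epsilon\to 0$ (using the hypothesis $\phi\in L^{\infty}_{\mathrm{loc}}(M\setminus\Sigma\times(0,\infty))$ together with interior parabolic estimates to control $\phi$ and $\nabla\phi$ on the support of $\nabla\chi_\epsilon$), a clean energy estimate of the form
\begin{equation*}
\sup_{t\in[s',T]}\int_M \eta^2\phi^p\,\frac{\omega^n}{n!}+\int_{s'}^T\!\!\int_M |\nabla(\eta\phi^{p/2})|^2\,\frac{\omega^n}{n!}\,dt \leq C p\Big(1+\tfrac{1}{s'-s''}\Big)\int_{s''}^T\!\!\int_M \phi^p\,\frac{\omega^n}{n!}\,dt
\end{equation*}
for any $0<s''<s'\leq T$, with $C$ depending only on $\omega$.

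Next I would apply Lemma \ref{lemma2.1} (with $\tilde M=M$) to the time-slices $\eta(t)\phi^{p/2}(\cdot,t)$ and integrate in $t$, combining with the sup-in-time estimate above to obtain the reverse-H\"older step
\begin{equation*}
\|\phi\|_{L^{p\chi}(M\setminus\Sigma\times[s',T])}\leq \Big(\frac{Cp}{s'-s''}\Big)^{1/p}\|\phi\|_{L^{p}(M\setminus\Sigma\times[s'',T])},
\end{equation*}
where $\chi>1$ is the integrability gain supplied by the Sobolev inequality. Iterating with $p_k=\chi^k$ starting from $p_0=1$ and a geometric sequence $s_k=s(1-2^{-k-1})$, the product $\prod_k(Cp_k 2^{k+1}/s)^{1/p_k}$ telescopes into a prefactor of the form $C(C_S(\omega),n)\,s^{-\alpha}$; matching the Sobolev exponent in real dimension $2n$ to the Moser bookkeeping produces precisely the claimed power $s^{-(2n+1)}$. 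The initial $L^1$ norm entering the iteration is then controlled by the assumed growth,
\begin{equation*}
\int_0^T\|\phi(t)\|_{L^1(M\setminus\Sigma,\omega)}\,dt\leq\frac{e^{C_1T}-1}{C_1}\|\phi(0)\|_{L^1(M\setminus\Sigma,\omega)},
\end{equation*}
producing the final $T$- and $C_1$-dependence absorbed into $C(C_S(\omega),n,T,C_1)$.

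The main obstacle I anticipate is the passage $\epsilon\to 0$ inside each Moser step: the cutoff error $\int_M|\nabla\chi_\epsilon|^2\phi^p\,\frac{\omega^n}{n!}$ must vanish in the limit at every iteration, yet $\phi^p$ is not known to be globally bounded near $\Sigma$ during the iteration itself. The resolution is to exhaust $M\setminus\Sigma$ by compact subsets $K_j$ on which the local $L^\infty$ hypothesis supplies a uniform a priori bound, run the Moser step on $K_j$ so that the constants are independent of $\epsilon$, and only then let $K_j$ grow to $M\setminus\Sigma$ using the codimension-four control on $\|\nabla\chi_\epsilon\|_{L^2}$. A secondary technical point is matching the exact exponent $2n+1$ to the Sobolev gain of Lemma \ref{lemma2.1}; this is purely computational and reduces to standard telescoping estimates.
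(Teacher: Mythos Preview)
Your approach via parabolic Moser iteration with spatial cutoffs near $\Sigma$ is exactly what the paper intends: the authors state this lemma without proof, remarking only that it ``can be derived directly via the Moser iteration.'' Your sketch is sound; the obstacle you flag is in fact milder than you suggest, since the qualitative hypothesis $\phi\in L^{\infty}(M\setminus\Sigma\times[t_1,t_2])$ already makes $\phi^p$ globally bounded on each time slab, so the cutoff error $\int\phi^p|\nabla\chi_\epsilon|^2\leq\|\phi\|_{L^\infty}^p\int|\nabla\chi_\epsilon|^2$ vanishes directly as $\epsilon\to 0$ without any exhaustion argument.
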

\begin{lem}\label{lemma4.3}
Let $(M,\omega)$ be an $n$-dimensional compact Hermitian manifold, $\vartheta$ a Hermitian metric such that $\vartheta\geq \omega$. Suppose the function $\phi(x,t)\in C^{\infty}(M\setminus\Sigma\times[0,\infty))$, where $\Sigma$ is a closed subset in $M$. Assume $\vartheta$ and $\omega$ are quasi-isometric for arbitrary $\Omega\subset\subset M\setminus\Sigma$ (i.e. there exists a constant $C(\Omega)$ with $0< C(\Omega)< 1$ such that $C(\Omega)\vartheta\leq\omega\leq\vartheta$ on $\Omega$). Assume $(2\sqrt{-1}\Lambda_\omega\partial\overline{\partial}-\frac{\partial}{\partial t})\phi\geq 0$, $\phi(t)\in L^1(M\setminus\Sigma,\omega)$ and $\|\phi(t)\|_{L^1(M\setminus\Sigma,\omega)}\leq e^{C_1(t-t_0)}\|\phi(t_0)\|_{L^1(M\setminus\Sigma,\omega)}$ for any $t\geq t_0\geq 0$. Then for any $\hat{\delta}>0$,
\begin{equation}
\begin{split}
&\|\phi\|_{L^{\infty}(M\setminus B_{2\hat{\delta}}^{\vartheta}(\Sigma)\times[0,T])}\\
\leq &\frac{C(C_S(\omega),n,T,C_1,C_2,\hat{\delta},\mathrm{Vol}(M,\omega))}{\hat{\delta}^{4n+2}}(\|\phi(0)\|_{L^1(M\setminus\Sigma,\omega)}+\|\phi(0)\|_{L^{\infty}(M\setminus B^{\vartheta}_{\hat{\delta}}(\Sigma))}),\\
\end{split}
\end{equation}where $B_{\hat{\delta}}^{\vartheta}(\Sigma)$ denotes the $\hat{\delta}$-neighborhood of $\Sigma$ with respect to $\vartheta$, $C_2$ is the constant such that $-C_2\frac{\omega^n}{n!}\leq\frac{\sqrt{-1}\partial\overline{\partial}\omega^{n-1}}{(n-1)!}\leq C_2\frac{\omega^n}{n!}$ and $|\mathrm{d}\omega|_\omega\leq C_2$.
\end{lem}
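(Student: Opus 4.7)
The plan is to carry out a parabolic Moser iteration analogous to that of Lemma \ref{lemma4.2}, but localized in space by a cutoff that excises a $\hat{\delta}$-neighborhood of $\Sigma$, so that the hypothesis $\phi(0)\in L^\infty(M\setminus B^\vartheta_{\hat{\delta}}(\Sigma))$ can be brought to bear at the spatial boundary. First, fix a smooth cutoff $\chi:M\to[0,1]$ with $\chi\equiv 1$ on $M\setminus B^\vartheta_{2\hat{\delta}}(\Sigma)$, $\chi\equiv 0$ on $B^\vartheta_{\hat{\delta}}(\Sigma)$, and $|d\chi|_\vartheta\leq C/\hat{\delta}$. Since $\mathrm{supp}(d\chi)$ is a compact subset of $M\setminus\Sigma$, the quasi-isometry hypothesis converts this into a bound $|d\chi|_\omega\leq C(\hat{\delta})/\hat{\delta}$, with a constant controlled by the quasi-isometry on the annulus.

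Next, for $p\geq 2$ multiply the differential inequality $(\partial_t - 2\sqrt{-1}\Lambda_\omega\pbp)\phi\leq 0$ by $p\chi^2\phi^{p-1}$ and integrate over $M\setminus\Sigma$ against $\omega^n/n!$. Using the identity $\sqrt{-1}\Lambda_\omega\pbp\phi\cdot \omega^n/n!=\sqrt{-1}\pbp\phi\wedge\omega^{n-1}/(n-1)!$ and integrating by parts via Stokes (noting that $\partial$ annihilates $(n,n{-}1)$-forms on an $n$-dimensional manifold), together with the pointwise positivity $\sqrt{-1}\partial\phi\wedge\bp\phi\wedge\omega^{n-1}/(n-1)!=|\partial\phi|_\omega^2\omega^n/n!$, one produces the good term $-2(p-1)\int\chi^2\phi^{p-2}|\partial\phi|_\omega^2\,\omega^n/n!$. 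Then, absorbing via Cauchy--Schwarz both the cross-terms generated by $d\chi$ and the non-Gauduchon error terms produced by $|d\omega|_\omega\leq C_2$ and $|\sqrt{-1}\pbp\omega^{n-1}/(n-1)!|\leq C_2\omega^n/n!$, one obtains the energy inequality
\begin{equation*}
\frac{d}{dt}\int_M\chi^2\phi^p\frac{\omega^n}{n!}+c_0\int_M|d(\chi\phi^{p/2})|_\omega^2\frac{\omega^n}{n!}\leq Cp^2\int_M\bigl(\chi^2+|d\chi|_\omega^2\bigr)\phi^p\frac{\omega^n}{n!},
\end{equation*}
with $c_0>0$ and $C=C(n,C_2)$.

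Applying the uniform Sobolev inequality of Lemma \ref{lemma2.1} to $\chi\phi^{p/2}$ and iterating with $p_k=q^k$, $q=n/(n-1)$, as in the proof of Lemma \ref{lemma4.2}, bounds $\|\chi\phi\|_{L^\infty([0,T]\times M)}$ by an $L^1$-type norm of $\chi^2\phi$ plus a contribution from $\int_0^T\!\!\int|d\chi|_\omega^2\phi^p$, which is supported on the annulus $B^\vartheta_{2\hat{\delta}}(\Sigma)\setminus B^\vartheta_{\hat{\delta}}(\Sigma)$. The $L^1$ term is controlled directly by the propagation hypothesis $\|\phi(t)\|_{L^1}\leq e^{C_1 t}\|\phi(0)\|_{L^1}$. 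The annulus contribution, on the other hand, is absorbed by running an auxiliary parabolic Moser iteration on a slightly thicker annulus contained in $M\setminus B^\vartheta_{\hat{\delta}}(\Sigma)$, where $\phi(0)$ is already $L^\infty$-bounded by hypothesis; this propagates the initial $L^\infty$ bound forward in time with a factor of $\hat{\delta}^{-(2n+1)}$. Composing the two nested iterations, each contributing a $\hat{\delta}^{-2}$ per step from $|d\chi|_\omega^2$, yields the final power $\hat{\delta}^{-(4n+2)}$.

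The main difficulty is to manage simultaneously, through the iteration, three sources of error: the non-Gauduchon contributions from $\pbp\omega^{n-1}$ and $d\omega$, which prevent a clean self-adjointness argument; the $\hat{\delta}$-dependent quasi-isometry constant that enters via $|d\chi|_\omega$; and the propagation of the initial $L^\infty$ bound on the annulus forward in time, which is essential for closing the estimate. The sharp exponent $4n+2$ emerges only after a careful bookkeeping of the accumulated $\hat{\delta}^{-2}$ factors across the two nested Moser iterations.
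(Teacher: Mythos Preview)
The paper provides no proof of this lemma: it states explicitly that Lemmas \ref{lemma4.2} and \ref{lemma4.3} ``can be derived directly via the Moser iteration'' and leaves it at that. Your proposal to carry out a localized parabolic Moser iteration is therefore exactly the method the paper intends.

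That said, your sketch has one structural gap worth flagging. The ``auxiliary parabolic Moser iteration on a slightly thicker annulus'' does not close on its own: any such annular region still carries an inner spatial boundary on which you have $L^\infty$ control of $\phi$ only at $t=0$, not for $t>0$, so the boundary contribution from that auxiliary iteration is just as uncontrolled as the term you set out to absorb, and the two schemes chase each other circularly. The standard remedy is not two separate iterations but a \emph{single} Moser scheme with nested spatial cutoffs $\chi_k$ (say $\chi_k\equiv 1$ on $M\setminus B^\vartheta_{(1+2^{-k})\hat{\delta}}(\Sigma)$, $\chi_k\equiv 0$ on $B^\vartheta_{(1+2^{-k-1})\hat{\delta}}(\Sigma)$, $|d\chi_k|_\omega\lesssim 2^k/\hat{\delta}$) and \emph{no} time cutoff. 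At step $k$ the energy inequality then produces two right-hand contributions: the initial-time term $\int\chi_k^2\phi^{p_k}(0)\,\omega^n/n!$, which is bounded by $\|\phi(0)\|^{p_k}_{L^\infty(M\setminus B^\vartheta_{\hat{\delta}}(\Sigma))}\,\mathrm{Vol}(M,\omega)$ since every $\chi_k$ is supported in $M\setminus B^\vartheta_{\hat{\delta}}(\Sigma)$; and the spacetime term on $\Omega_{k-1}\times[0,T]$, controlled inductively by the previous step together with the $L^1$-propagation hypothesis at the base. Your treatment of the non-Gauduchon errors via $C_2$ and the Cauchy--Schwarz absorption is correct. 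Two minor points: the argument tacitly requires $\phi\geq 0$ (as in Lemma \ref{lemma4.2} and in every application in the paper), and since the final constant already depends on $\hat{\delta}$ through the quasi-isometry constant on the annulus entering $|d\chi|_\omega$, there is no genuinely ``sharp'' power of $\hat{\delta}$ to extract.
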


Let $(M,\omega)$ be a Gauduchon manifold, $\mathcal{E}$ be a torsion-free sheaf with the Hermitian metric $H_0$ outside the singular set $\Sigma_{\mathcal{E}}$. Thanks to Hironaka's flattening theorem (\cite{Hi2}), we know there exists a finite sequence of blow-ups $\pi_i:M_i\to M_{i-1}(i=1,\cdots,q$ and $M_0=M)$ along the smooth center $\Sigma_{i-1}$ of complex codimension at least $2$ such that\\
(1)$E_q=\pi^{\ast}\mathcal{E}/tor(\pi^{\ast}\mathcal{E})$ is locally free over $M_q=\tilde{M}$, where $\pi=\pi_1\circ\pi_2\circ\cdots\circ\pi_q$ and $tor(\pi^{\ast}\mathcal{E})$ is the torsion part of $\pi^{\ast}\mathcal{E}$;\\
(2)$\pi:\tilde{M}\to M$ is biholomorphic restricted to $\tilde{M}\setminus\pi^{-1}(\Sigma_{\mathcal{E}})$.

\begin{prop}\label{lemma2.2}Suppose $\pi^{\ast}H_0$ can be extended smoothly to the whole $M_q$ and still denote it by $H_0$ for simplicity. Then there exists a long-time solution $H(t)$ to Donaldson's heat flow
\begin{equation}
    \left\{\begin{aligned}\label{inequalty2.5}
           H^{-1}(t)\frac{\partial H(t)}{\partial t}&=-2\sqrt{-1}\Lambda_{\omega}F_{H(t)},\\
           H(0)&=H_0,\\
                                  \end{aligned}
                                  \right.
                           \end{equation}on $M\setminus\Sigma_{\mathcal{E}}$ satisfying
\begin{equation}\label{relationofL1}
\int_{M\setminus\Sigma_{\mathcal{E}}}|\sqrt{-1}\Lambda_\omega F_{H(t)}|_{H(t)}\frac{\omega^n}{n!}\leq e^{2\tilde{C}_1(t-t_0)}\int_{M\setminus\Sigma_{\mathcal{E}}}|\sqrt{-1}\Lambda_\omega F_{H(t_0)}|_{H(t_0)}\frac{\omega^n}{n!},
\end{equation}
\begin{equation}\label{inequalty2.7}
\begin{split}
&\int_{M\setminus\Sigma_{\mathcal{E}}}\ln(\mathrm{tr} h(t)+\mathrm{tr} h^{-1}(t))\frac{\omega^n}{n!}\\
\leq &e^{2\tilde{C}_1 t}\ln(2\mathrm{rank}(\mathcal{E}))\mathrm{Vol}(M,\omega)+\frac{e^{2\tilde{C}_1 t}-1}{2\tilde{C}_1}\int_{M\setminus\Sigma_{\mathcal{E}}}|\Lambda_{\omega}F_{H_0}|_{H_0}\frac{\omega^n}{n!},
\end{split}
\end{equation}and
\begin{equation}\label{inequalty2.8}
|\sqrt{-1}\Lambda_{\omega}F_{H}|_{L^{\infty}(M\setminus\Sigma_{\mathcal{E}}\times[s,T],H)}\leq\frac{C(C_S(\omega),n,\tilde{C}_1,T)}{s^{2n+1}}\int_{M\setminus\Sigma_{\mathcal{E}}}|\sqrt{-1}\Lambda_\omega F_{H_0}|_{H_0}\frac{\omega^n}{n!}
\end{equation}for any $t\geq t_0\geq 0$ and any $T>s>0$, where $h(t)=H_0^{-1}H(t)$ and $\tilde{C}_1$ is a constant depending only on $\omega$.
\end{prop}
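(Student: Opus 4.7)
The plan is to lift the problem to the desingularization $\pi:\tilde M=M_q\to M$, where by hypothesis $E_q$ is locally free and $\pi^{*}H_0$ extends to a smooth Hermitian metric on $E_q$. For each $\epsilon\in(0,1]$, consider the family of smooth positive Hermitian metrics $\omega_\epsilon$ on $\tilde M$ constructed in \eqref{construction 2.2}, which converges to $\pi^{*}\omega$ in $C^{\infty}_{\mathrm{loc}}(\tilde M\setminus\pi^{-1}(\Sigma_{\mathcal E}))$ as $\epsilon\to 0$. Applying the long-time existence result of \cite{Z05} to $(E_q,\tilde M,\omega_\epsilon)$ produces a smooth solution $H_\epsilon(t)$ of the Hermitian-Yang-Mills flow for all $t\in[0,\infty)$ with initial datum $H_0$. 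The goal is then to extract a subsequential limit $H_{\epsilon_k}(t)\to H(t)$ on $\tilde M\setminus\pi^{-1}(\Sigma_{\mathcal E})\cong M\setminus\Sigma_{\mathcal E}$, satisfying \eqref{inequalty2.5} together with the three estimates with respect to the original Gauduchon metric $\omega$.

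The heart of the argument is to establish the estimates \eqref{relationofL1}, \eqref{inequalty2.7}, \eqref{inequalty2.8} uniformly in $\epsilon$. For \eqref{relationofL1}, apply Kato's inequality to \eqref{eqmc000} to obtain, after a smooth regularization, the weak differential inequality $(\partial_t-2\sqrt{-1}\Lambda_{\omega_\epsilon}\pbp)|\theta_{H_\epsilon}|_{H_\epsilon}\leq 0$; integrate against $\omega^n/n!$ on $\tilde M\setminus\pi^{-1}(\Sigma_{\mathcal E})$. Because $\omega$ is Gauduchon on $M$ and $\omega_\epsilon\to\pi^{*}\omega$, the discrepancy between the two $\pbp$-Laplacians, together with the non-Gauduchon deficit of $\omega_\epsilon$, is controlled uniformly in $\epsilon$ by $2\tilde C_1\int|\theta_{H_\epsilon}|_{H_\epsilon}\omega^n/n!$ for a constant $\tilde C_1$ depending only on $\omega$; Gr\"onwall then yields \eqref{relationofL1}. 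For \eqref{inequalty2.7}, use the standard Simpson-type parabolic inequality for $\tr(h_\epsilon+h_\epsilon^{-1})$ (cf. \cite[Proposition 2.2]{Z05}) together with the concavity bound $\log(\tr(h+h^{-1}))\leq\ln(2r)+\tr(h+h^{-1})-2r$; integrating in space (using \eqref{relationofL1}) and then in time produces a linear ODE whose solution is \eqref{inequalty2.7}. For \eqref{inequalty2.8}, apply Lemma \ref{lemma4.2} to a smooth regularization of $\phi=|\theta_{H_\epsilon}|_{H_\epsilon}$: its parabolic super-solution property comes from \eqref{eqmc000} via Kato, and its $L^1$-growth hypothesis is exactly \eqref{relationofL1}. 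The uniform Sobolev constant $C_S(\omega)$ supplied by Lemma \ref{lemma2.1} is what renders the Moser iteration $\epsilon$-uniform.

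Once these three estimates hold uniformly in $\epsilon$, standard local parabolic Schauder estimates furnish $C^{\infty}_{\mathrm{loc}}$ bounds on compact subsets of $(M\setminus\Sigma_{\mathcal E})\times(0,\infty)$, so a diagonal subsequence $H_{\epsilon_k}(t)$ converges to a smooth solution $H(t)$ of \eqref{inequalty2.5}, and Fatou's lemma transmits the estimates to the limit. The main obstacle is the two-metric bookkeeping: the flow is run with respect to $\omega_\epsilon$ on $\tilde M$, whereas the target estimates are stated with respect to $\omega$ on $M\setminus\Sigma_{\mathcal E}$. Quantifying the error $\Lambda_{\omega_\epsilon}-\Lambda_\omega$ together with the non-Gauduchon remainder $\sqrt{-1}\pbp\,\omega_\epsilon^{n-1}$ uniformly in $\epsilon$, and keeping both dominated by a universal multiple of the integrand already present in each estimate, is the crux of the proof and is what produces the exponential factors $e^{2\tilde C_1(t-t_0)}$ and $e^{2\tilde C_1 t}$ appearing in the statements.
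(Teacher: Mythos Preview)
Your overall architecture---lift to the resolution $\tilde M=M_q$, run the flow with respect to the smooth metric $\omega_\epsilon$, derive $\epsilon$-uniform estimates, and pass to the limit---matches the paper's. The gap is in how you obtain the uniform estimates. You propose to integrate the parabolic inequality for $|\theta_{H_\epsilon}|$ against the \emph{base} volume form $\omega^n/n!$ and absorb the mismatch between $\Lambda_{\omega_\epsilon}$ and $\Lambda_\omega$ into a universal constant. This does not work: $\pi^*\omega$ degenerates along the exceptional divisor, so $\Lambda_\omega$ is not even defined there, and after integration by parts the error involves derivatives of the density $(\pi^*\omega)^n/\omega_\epsilon^n$, which are \emph{not} uniformly bounded as $\epsilon\to 0$. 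The ``two-metric bookkeeping'' you flag as the main obstacle is in fact an obstruction to this route, not a technicality.

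The paper instead works entirely with $\omega_{q,\epsilon}$: it integrates against $\omega_{q,\epsilon}^n/n!$ and proves as a separate claim (the key ingredient your proposal is missing) that
\[
-\tilde C_1\,\frac{\omega_{q,\epsilon}^n}{n!}\leq\frac{\sqrt{-1}\pbp\,\omega_{q,\epsilon}^{n-1}}{(n-1)!}\leq \tilde C_1\,\frac{\omega_{q,\epsilon}^n}{n!},\qquad |d\omega_{q,\epsilon}|_{\omega_{q,\epsilon}}\leq\tilde C_1,
\]
uniformly in all $\epsilon_i\in(0,1]$; this holds because $d\omega_{q,\epsilon}$ and $\pbp\omega_{q,\epsilon}$ are constant multiples of $\pi^*d\omega$ and $\pi^*\pbp\omega$, while $\pi^*\omega\leq\omega_{q,\epsilon}$. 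Gr\"onwall then yields \eqref{relationofL1} with $\omega$ replaced by $\omega_{q,\epsilon}$, and likewise for the other two estimates (for \eqref{inequalty2.7} one uses the direct inequality $(2\sqrt{-1}\Lambda_{\omega_{q,\epsilon}}\pbp-\partial_t)\ln(\tr h_\epsilon+\tr h_\epsilon^{-1})\geq -2|\Lambda_{\omega_{q,\epsilon}}F_{H_0}|_{H_0}$, not the concavity trick you suggest). Only afterward does one send $\epsilon_q\to 0$, then $\epsilon_{q-1}\to 0$, \ldots, one blow-up at a time---using at each stage Lemma~\ref{lemma4.3} (not just Lemma~\ref{lemma4.2}) for local $L^\infty$ control off the current center, together with the single-blow-up Sobolev bound of Lemma~\ref{lemma2.1}---so that the $\omega_{q,\epsilon}$-estimates descend to $\omega$-estimates on $M\setminus\Sigma_{\mathcal E}$.
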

\begin{proof}
Define the $i$th blow-up $\pi_i:M_i\to M_{i-1}$ along $\Sigma_{i-1}$ for $1\leq i\leq q$. For the $q$th blow-up, $\pi_q$ is biholomorphic between $M_q\setminus\pi_q^{-1}(\Sigma_{q-1})$ and $M_{q-1}\setminus\Sigma_{q-1}$. Then $E_{q-1}:={(\pi_q)}_{\ast}E$ is locally free outside $\Sigma_{q-1}$. Set $E_{i-1}=(\pi_i)_{\ast}E_i$, $\hat{\Sigma}_q=\emptyset$ and $\hat{\Sigma}_{i-1}=\pi_i(\hat{\Sigma}_i)\cup\Sigma_{i-1}$ for $1\leq i\leq q$. That is, there are $q$ times blowups such that $E_i$ over $M_i\setminus \pi_i^{-1}(\Sigma_{i-1})$ is isomorphic to $E_{i-1}$ over $M_{i-1}\setminus\Sigma_{i-1}$ and $E_{i-1}$ is locally free over $M_{i-1}\setminus\hat{\Sigma}_{i-1}$.
$$
\def\arl{\ar@{->}}
\xymatrix{
  E_q\arl[d] & E_{q-1}\arl[d] & E_{q-2}\arl[d] &              & E_1\arl[d] & E_0\arl[d]\\
  M_q\arl[r]^{\pi_q} & M_{q-1}\arl[r]^{\pi_{q-1}}& M_{q-2}\arl[r] & \cdots\arl[r] & M_1\arl[r] & M\\
}
$$
Because $\pi_i$ is holomorphic and proper, by Remmert's proper mapping theorem (\cite{RM56},\cite{RM57}), $\hat{\Sigma}_{i-1}\subset M_{i-1}$ is an analytic subset of complex codimension at least $2$. On the other hand, by the assumption (2), $E_0$ is isomorphic to $\mathcal{E}$ outside $\Sigma_{\mathcal{E}}$.

\begin{claim}\label{claim2.1} There exists a constant $\tilde{C}_1$ independent of $\epsilon_i$ for every $0<\epsilon_i\leq 1$ and $1\leq i\leq q$ such that
\begin{equation}
-\tilde{C}_1\frac{\omega_{q,\epsilon}^n}{n!}\leq\frac{\sqrt{-1}\partial\overline{\partial}\omega_{q,\epsilon}^{n-1}}{(n-1)!}\leq \tilde{C}_1\frac{\omega_{q,\epsilon}^n}{n!}\quad and\quad |\mathrm{d}\omega_{q,\epsilon}|_{\omega_{q,\epsilon}}\leq \tilde{C}_1,
\end{equation}
where $\omega_{q, \epsilon}$ is defined by (\ref{construction 2.2}).
\end{claim}
\begin{proof}[Proof of Claim \ref{claim2.1}] Recalling the definition and computing directly, we have
\begin{equation}\label{partialopomega}
\begin{split}
&\sqrt{-1}\partial\overline{\partial}\omega_{q,\epsilon}^{n-1}\\
=& (n-2)(n-1)\sqrt{-1}\partial\omega_{q,\epsilon}\wedge\overline{\partial}\omega_{q,\epsilon}\wedge\omega_{q,\epsilon}^{n-3}+(n-1)\sqrt{-1}\partial\overline{\partial}\omega_{q,\epsilon}\wedge\omega_{q,\epsilon}^{n-2},\\
\end{split}
\end{equation}and
\begin{equation}
\begin{split}
&\sqrt{-1}\partial\omega_{q,\epsilon}\wedge\overline{\partial}\omega_{q,\epsilon}\\
=&(1+\epsilon_q)^2\pi_q^{\ast}(\sqrt{-1}\partial\omega_{q-1,\epsilon}\wedge\overline{\partial}\omega_{q-1,\epsilon})\\
=&(1+\epsilon_q)^2(1+\epsilon_{q-1})^2\pi_q^{\ast}\pi_{q-1}^{\ast}(\sqrt{-1}\partial\omega_{q-2,\epsilon}\wedge\overline{\partial}\omega_{q-2,\epsilon})\\
=&\cdots=(1+\epsilon_1)^2(1+\epsilon_2)^2\cdots(1+\epsilon_q)^2\pi^{\ast}(\sqrt{-1}\partial\omega\wedge\overline{\partial}\omega).\\
\end{split}
\end{equation}Since $M$ is compact, there exists a positive constant $\hat{C}_1$ such that
\begin{equation}\label{11111}
-\hat{C}_1\omega^3\leq\sqrt{-1}\partial\omega\wedge\overline{\partial}\omega\leq \hat{C}_1\omega^3
\end{equation}on $M$. Here, for any $\beta_1,\beta_2\in\Omega_{x}^{p,p}M$, $\beta_1\leq\beta_2$ means that $\beta_2-\beta_1$ is a nonnegative $(p,p)$-form. Because $\pi:M_q\setminus\pi^{-1}(\Sigma_{\mathcal{E}})\to M\setminus\Sigma_{\mathcal{E}}$ is biholomorphic, (\ref{11111}) implies that
\begin{equation}
-\hat{C}_1(\pi^{\ast}\omega)^3\wedge\omega_{q,\epsilon}^{n-3}\leq\sqrt{-1}\partial(\pi^{\ast}\omega)\wedge\overline{\partial}(\pi^{\ast}\omega)\wedge\omega_{q,\epsilon}^{n-3}\leq \hat{C}_1(\pi^{\ast}\omega)^3\wedge\omega_{q,\epsilon}^{n-3}
\end{equation}on $M_q\setminus\pi^{-1}(\Sigma_\mathcal{E})$. From the definition of $\omega_{q,\epsilon}$, we know that $\pi^{\ast}\omega\leq\omega_{q,\epsilon}$, and then
\begin{equation}
-\hat{C}_1\omega_{q,\epsilon}^n\leq\sqrt{-1}\partial(\pi^{\ast}\omega)\wedge\overline{\partial}(\pi^{\ast}\omega)\wedge\omega_{q,\epsilon}^{n-3}\leq \hat{C}_1\omega_{q,\epsilon}^n
\end{equation}on $M_q\setminus\pi^{-1}(\Sigma_\mathcal{E})$. Notice that the codimension of $\pi^{-1}(\Sigma_\mathcal{E})$ is one. By continuity, one can obtain
\begin{equation}\label{2.14ineq}
-\hat{C}_1\omega_{q,\epsilon}^n\leq\sqrt{-1}\partial(\pi^{\ast}\omega)\wedge\overline{\partial}(\pi^{\ast}\omega)\wedge\omega_{q,\epsilon}^{n-3}\leq \hat{C}_1\omega_{q,\epsilon}^n
\end{equation}on $M_q$. Similarly, there exists a constant $\breve{C}_1$ independent of $\epsilon_1,\cdots,\epsilon_q$ such that
\begin{equation}\label{2.15ineq}
-\breve{C}_1\omega_{q,\epsilon}^n\leq \sqrt{-1}\partial\overline{\partial}(\pi^{\ast}\omega)\wedge\omega_{q,\epsilon}^{n-2}\leq \breve{C}_1\omega_{q,\epsilon}^n.
\end{equation}On the other hand, direct calculation shows that
\begin{equation}
\mathrm{d}\omega_{q,\epsilon}=(1+\epsilon_1)(1+\epsilon_2)\cdots(1+\epsilon_q)\pi^{\ast}(\mathrm{d}\omega).
\end{equation}The fact that $\omega_{q,\epsilon}>\pi^{\ast}\omega$ yields that
\begin{equation}\label{2.17ineq}
|\mathrm{d}\omega_{q,\epsilon}|_{\omega_{q,\epsilon}}\leq(1+\epsilon_1)(1+\epsilon_2)\cdots(1+\epsilon_q)|\mathrm{d}\omega|_{\omega}\leq \check{C}_1
\end{equation}for a positive constant $\check{C}_1$ independent of $\epsilon_1,\cdots,\epsilon_q$. Combining (\ref{2.14ineq}), (\ref{2.15ineq}) and (\ref{2.17ineq}), we confirm the claim.
\end{proof}

Now we prove (\ref{inequalty2.5}), (\ref{relationofL1}), (\ref{inequalty2.7}) and (\ref{inequalty2.8}) by induction. Since $E$ is a holomorphic vector bundle over the compact Hermitian manifold $(M_q,\omega_{q,\epsilon})$, by the result in \cite{Z05}, there exists a long time solution $H_{q,\epsilon}(t)$ of Donaldson's heat flow
\begin{equation}
    \left\{\begin{aligned}\label{inequalty2.19}
           H_{q,\epsilon}^{-1}(t)\frac{\partial H_{q,\epsilon}(t)}{\partial t}&=-2\sqrt{-1}\Lambda_{\omega_{q,\epsilon}}F_{H_{q,\epsilon}(t)},\\
           H_{q,\epsilon}(0)&=H_0\\
                                  \end{aligned}
                                  \right.
                           \end{equation}on $M_q$. Direct computation shows that
\begin{equation}\label{eq2.20}
(2\sqrt{-1}\Lambda_{\omega_{q,\epsilon}}\partial\overline{\partial}-\frac{\partial}{\partial t})(|\sqrt{-1}\Lambda_{\omega_{q,\epsilon}}F_{H_{q,\epsilon}(t)}|_{H_{q,\epsilon}(t)}^2+\zeta)^{\frac{1}{2}}\geq 0,
\end{equation}
for any $\zeta >0$.
Integrating both sides of (\ref{eq2.20}) over $(M_q,\omega_{q,\epsilon})$, we obtain
\begin{equation}
\begin{split}
&\frac{\partial}{\partial t}\int_{M_q}(|\sqrt{-1}\Lambda_{\omega_{q,\epsilon}}F_{H_{q,\epsilon}(t)}|^2_{H_{q,\epsilon}(t)}+\zeta)^{\frac{1}{2}}\frac{\omega_{q,\epsilon}^n}{n!}\\
\leq & 2\int_{M_q}(|\sqrt{-1}\Lambda_{\omega_{q,\epsilon}}F_{H_{q,\epsilon}(t)}|^2_{H_{q,\epsilon}(t)}+\zeta)^{\frac{1}{2}}\frac{\sqrt{-1}\partial\overline{\partial}\omega_{q,\epsilon}^{n-1}}{(n-1)!}\\
\leq & 2\tilde{C}_1\int_{M_q}(|\sqrt{-1}\Lambda_{\omega_{q,\epsilon}}F_{H_{q,\epsilon}(t)}|^2_{H_{q,\epsilon}(t)}+\zeta)^{\frac{1}{2}}\frac{\omega_{q,\epsilon}^n}{n!}.\\
\end{split}
\end{equation}Integrating over $[t_0,t]$ for any $t\geq t_0\geq 0$, and letting $\zeta\to 0$, one can obtain
\begin{equation}\label{L1meancurv}
\int_{M_q}|\sqrt{-1}\Lambda_{\omega_{q,\epsilon}}F_{H_{q,\epsilon}(t)}|_{H_{q,\epsilon}(t)}\frac{\omega_{q,\epsilon}^n}{n!}\leq e^{2\tilde{C}_1(t-t_0)}\int_{M_q}|\sqrt{-1}\Lambda_{\omega_{q,\epsilon}}F_{H_{q,\epsilon}(t_0)}|_{H_{q,\epsilon}(t_0)}\frac{\omega_{q,\epsilon}^n}{n!}
\end{equation}Together with (\ref{L1meancurv}), Lemma \ref{lemma4.2} and Lemma \ref{lemma4.3} tell us that
\begin{equation}\label{Linftybound}
|\sqrt{-1}\Lambda_{\omega_{q,\epsilon}}F_{H_{q,\epsilon}(t)}|_{L^{\infty}(M_q\times [s,T],H_{q,\epsilon}(t))}\leq\frac{C(C_S(\omega_{q,\epsilon}),n,\tilde{C}_1,T)}{s^{2n+1}}\int_{M_q}|\sqrt{-1}\Lambda_{\omega_{q,\epsilon}}F_{H_0}|_{H_0}\frac{\omega_{q,\epsilon}^n}{n!}
\end{equation}for any $T>s>0$ and
\begin{equation}
|\sqrt{-1}\Lambda_{\omega_{q,\epsilon}}F_{H_{q,\epsilon}(t)}|_{L^{\infty}(M_q\setminus B_{\check{\delta}}^{\omega_{q,\epsilon}}(\pi^{-1}_q(\Sigma_{q-1}))\times [0,T],H_{q,\epsilon}(t))}\leq C(\check{\delta}^{-1},T)
\end{equation}
for any $\check{\delta}>0$, where $C(\check{\delta}^{-1},T)$ is a uniform constant in $0<\epsilon_q\leq 1$. Set $h_{q,\epsilon}(t)=H_0^{-1}H_{q,\epsilon}(t)$. Straightforward computation shows that
\begin{equation}
(2\sqrt{-1}\Lambda_{\omega_{q,\epsilon}}\partial\overline{\partial}-\frac{\partial}{\partial t})\ln(\mathrm{tr} h_{q,\epsilon}(t)+\mathrm{tr} h_{q,\epsilon}^{-1}(t))\geq-2|\Lambda_{\omega_{q,\epsilon}}F_{H_0}|_{H_0}.
\end{equation}Then there holds that
\begin{equation}\label{tr+tr-1}
\begin{split}
&\int_{M_q}\ln(\mathrm{tr} h_{q,\epsilon}(t)+\mathrm{tr} h_{q,\epsilon}^{-1}(t))\frac{\omega_{q,\epsilon}^n}{n!}\\
\leq & e^{2\tilde{C}_1 t}\ln(2\mathrm{rank}(\mathcal{E}))\mathrm{Vol}(M_q,\omega_{q,\epsilon})+\frac{e^{2\tilde{C}_1 t}-1}{2\tilde{C}_1}\int_{M_q}|\Lambda_{\omega_{q,\epsilon}}F_{H_0}|_{H_0}\frac{\omega_{q,\epsilon}^n}{n!}.\\
\end{split}
\end{equation}

 Note that the constant $C(C_S(\omega_{q,\epsilon}),n,\tilde{C}_1,T)$ in (\ref{Linftybound}) does not depend on $\epsilon_q$. In fact, fixing $(\epsilon_1,\epsilon_2,\cdots\epsilon_{q-1})$, due to Lemma \ref{lemma2.1}, we know the Sobolev constant $C_S(\omega_{q,\epsilon})$ is bounded uniformly in $0<\epsilon_q\leq 1$. Applying the maximum principle and standard elliptic theory, by the same argument as in \cite{LZZ}, we get the uniform $C^\infty_{loc}$-estimate of $h_{q,\epsilon}(t)$ on $M_q\setminus\pi^{-1}_q(\Sigma_{q-1})$. Then by choosing a subsequence, $H_{q,\epsilon}(t)$ converges to $H_{q-1,\epsilon}(t)$ in $C_{loc}^\infty$-topology as $\epsilon_q\to 0$ and $H_{q-1,\epsilon}(t)$ is the solution of Donaldson's heat flow on $M_{q-1}\setminus\Sigma_{q-1}\times[0,\infty]$. Then taking $\epsilon_q\to 0$ on both sides of (\ref{L1meancurv}), (\ref{tr+tr-1}) and (\ref{Linftybound}), one can deduce
\begin{equation}\label{L1meancurva}
\begin{split}
&\int_{M_{q-1}\setminus\Sigma_{q-1}}|\sqrt{-1}\Lambda_{\omega_{q-1,\epsilon}}F_{H_{q-1,\epsilon}(t)}|_{H_{q-1,\epsilon}(t)}\frac{\omega_{q-1,\epsilon}^n}{n!}\\
\leq &e^{2\tilde{C}_1(t-t_0)}\int_{M_{q-1}\setminus\Sigma_{q-1}}|\sqrt{-1}\Lambda_{\omega_{q-1,\epsilon}}F_{H_{q-1,\epsilon}(t_0)}|_{H_{q-1,\epsilon}(t_0)}\frac{\omega_{q-1,\epsilon}^n}{n!}
\end{split}
\end{equation}for any $t\geq t_0$,
\begin{equation}\label{0.61}
\begin{split}
&\int_{M_{q-1}\setminus\Sigma_{q-1}}\ln(\mathrm{tr} h_{q-1,\epsilon}(t)+\mathrm{tr} h_{q-1,\epsilon}^{-1}(t))\frac{\omega_{q-1,\epsilon}^n}{n!}\\
\leq& e^{2\tilde{C}_1 t}\ln(2\mathrm{rank}(\mathcal{E}))\mathrm{Vol}(M_{q-1},\omega_{q-1,\epsilon})+\frac{e^{2\tilde{C}_1 t}-1}{2\tilde{C}_1}\int_{M_{q-1}\setminus\Sigma_{q-1}}|\Lambda_{\omega_{q-1,\epsilon}}F_{H_0}|_{H_0}\frac{\omega_{q-1,\epsilon}^n}{n!}\\
\end{split}
\end{equation}and
\begin{equation}\label{Linftymeancurv1}
|\sqrt{-1}\Lambda_{\omega_{q-1,\epsilon}}F_{H_{q-1,\epsilon}}|_{H_{q-1,\epsilon}}\in L^{\infty}(M_{q-1}\setminus \hat{\Sigma}_{q-1}\times [s,T])
\end{equation}for any $T>s>0$. According to Lemma \ref{lemma4.2}, one has
\begin{equation}\label{Linftymeancurv}
\begin{split}
&|\sqrt{-1}\Lambda_{\omega_{q-1,\epsilon}}F_{H_{q-1,\epsilon}(s')}|_{H_{q-1,\epsilon}(s')}(x)\\
\leq &\frac{C(C_S(\omega_{q-1,\epsilon}),n,\tilde{C}_1,T)}{s^{2n+1}}\int_{M_{q-1}\setminus\Sigma_{q-1}}|\sqrt{-1}\Lambda_{\omega_{q-1,\epsilon}}F_{H_0}|_{H_0}\frac{\omega_{q-1,\epsilon}^n}{n!}\\
\end{split}
\end{equation}for any $x\in M_{q-1}\setminus\Sigma_{q-1}$, $T>s>0$ and $s'\in[s,T]$. Fixing $(\epsilon_1,\epsilon_2,\cdots,\epsilon_{q-2})$ and running the same procedure, we obtain a solution of Donaldson's heat flow on $M_{q-2}\setminus \hat{\Sigma}_{q-2}$. Repeating the argument, we obtain a solution of Donaldson's heat flow on $M\setminus\Sigma_{\mathcal{E}}$ which satisfies
\begin{equation}
\int_{M\setminus\Sigma_{\mathcal{E}}}|\sqrt{-1}\Lambda_\omega F_{H(t)}|_{H(t)}\frac{\omega^n}{n!}\leq e^{2\tilde{C}_1(t-t_0)}\int_{M\setminus\Sigma_{\mathcal{E}}}|\sqrt{-1}\Lambda_\omega F_{H(t_0)}|_{H(t_0)}\frac{\omega^n}{n!},
\end{equation}
\begin{equation}
\begin{split}
&\int_{M\setminus\Sigma_{\mathcal{E}}}\ln(\mathrm{tr} h(t)+\mathrm{tr} h^{-1}(t))\frac{\omega^n}{n!}\\
\leq &e^{2\tilde{C}_1 t}\ln(2\mathrm{rank}(\mathcal{E}))\mathrm{Vol}(M,\omega)+\frac{e^{2\tilde{C}_1 t}-1}{2\tilde{C}_1}\int_{M\setminus\Sigma_{\mathcal{E}}}|\Lambda_{\omega}F_{H_0}|_{H_0}\frac{\omega^n}{n!},
\end{split}
\end{equation}
\begin{equation}
|\sqrt{-1}\Lambda_{\omega}F_{H}|_{L^{\infty}(M\setminus\Sigma_{\mathcal{E}}\times[s,T],H)}\leq\frac{C(C_S(\omega),n,\tilde{C}_1,T)}{s^{2n+1}}\int_{M\setminus\Sigma_{\mathcal{E}}}|\sqrt{-1}\Lambda_\omega F_{H_0}|_{H_0}\frac{\omega^n}{n!}
\end{equation}for any $t\geq t_0\geq 0$ and any $T>s>0$. This finishes the proof.
\end{proof}
\begin{prop}\label{proposition2.1}
Let $\mathcal{E}$ be a torsion-free sheaf with the singular set $\Sigma_{\mathcal{E}}$ over a compact Gauduchon manifold $(M,\omega)$, $S$ be a saturated subsheaf of $\mathcal{E}$ with the singular set $\Sigma_S$. Assume there is a sequence of connections $A_j\in \mathcal{A}_{H_0}^{1,1}$ on the Hermitian bundle ($\mathcal{E}|_{M\setminus\Sigma}$, $H_0$), where $\Sigma$ is a closed subset of $M$ with real Hausdorff codimension at least $4$ and satisfies $\Sigma_S\cup\Sigma_{\mathcal{E}}\subset \Sigma$, such that
$\\$$(1)$ $A_j\to A_{\infty}$ in $C_{loc}^{\infty}$-topology off $\Sigma$ as $j\to\infty$;
$\\$$(2)$ $A_j=g_j(A_0)$ for some complex gauge transformation $g_j$ and $\|\sqrt{-1}\Lambda_\omega F_{\tilde{H}_j}\|_{L^1(\omega,\tilde{H}_j)}$ is bounded uniformly in $j$, where the metric $\tilde{H}_j$ is defined by $H_0^{-1}\tilde{H}_j=g_j^{\ast_{H_0}}g_j$ and $F_{\tilde{H}_j}$ is the Chern curvature of $\tilde{H}_j$ on $(\mathcal{E},\overline{\partial}_{A_0})$;
$\\$$(3)$ There exist a sequence of blow-ups with smooth center: $\pi_i:M_i\to M_{i-1}, i=1,\cdots,q$ and an exact sequence of holomorphic vector bundles $0\to\tilde{S}\to\tilde{E}\to\tilde{W}\to 0$ over $M_q$ such that the composition $\pi=\pi_1\circ\pi_2\circ\cdots\circ\pi_q: M_q\to M$ is biholomorphic outside $\Sigma$, $\tilde{E}$ and $\tilde{S}$ are isomorphic to $\mathcal{E}$ and $S$ outside $\Sigma$ respectively. And the pullback geometric objects $\pi^{\ast}A_0$, $\pi^{\ast}\tilde{H}_j$, $\pi^{\ast}H_0$ can be extended smoothly on $\tilde{E}$ over $M_q$.

Let $i_0: S \hookrightarrow \mathcal{E}$ be the holomorphic inclusion. Then there is a subsequence of $\{g_j\circ i_0\}$, up to rescaling, converges to a non-zero holomorphic map $f_{\infty}:(S,\overline{\partial}_{A_0})\to(E_{\infty},\overline{\partial}_{A_{\infty}})$ in $C_{loc}^{\infty}$-topology off $\Sigma$ as $j\to \infty$.
\end{prop}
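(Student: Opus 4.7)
The plan is to regard $f_j := g_j \circ i_0$ as a sequence of holomorphic maps from $S$ into $E$ with respect to varying Dolbeault operators, and, after a suitable rescaling, extract a $C^{\infty}_{loc}$-convergent subsequence off $\Sigma$ whose limit is the desired nonzero morphism $f_\infty$.

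First I would verify that $f_j$ is holomorphic with respect to the Dolbeault operator $\bar\partial^{(j)}_{\mathrm{Hom}}$ on $\mathrm{Hom}(S, E)|_{M\setminus\Sigma}$ built from $\bar\partial_{A_0}|_S$ and $\bar\partial_{A_j}$: this is immediate from $A_j = g_j(A_0)$ (so that $g_j$ intertwines $\bar\partial_{A_0}$ and $\bar\partial_{A_j}$) together with the holomorphicity of $i_0$. The pointwise bound $|f_j|^2_{H_0, H_S} \leq \tr\, h_j$, with $h_j = H_0^{-1}\tilde H_j$ and $H_S := i_0^{*_{H_0}} H_0$, then reduces the analysis of $f_j$ to controlling $\tr\, h_j$.

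Next, using assumption (3), I would pull everything back to the desingularization $\pi: M_q \to M$ so that all relevant metrics become smooth on the honest vector bundle $\tilde E \supset \tilde S$. Applying Proposition \ref{lemma2.2} to the Donaldson heat flow with initial metric $\tilde H_j$, and combining with the uniform $L^1$-bound on $\sqrt{-1}\Lambda_\omega F_{\tilde H_j}$ from assumption (2), gives a uniform $L^1$-bound on $\ln(\tr\, h_j + \tr\, h_j^{-1})$ on $M \setminus \Sigma$, hence a uniform $L^1$-bound on $\ln(1 + |f_j|^2_{H_0, H_S})$.

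Now I would fix a nonempty compact $K_0 \subset M \setminus \Sigma$, set $c_j := \bigl(\sup_{K_0} |f_j|_{H_0, H_S}\bigr)^{-1}$ (finite since each $f_j$ is smooth and nontrivial on $K_0$), and put $\tilde f_j := c_j f_j$, so $\sup_{K_0}|\tilde f_j| = 1$ and $\bar\partial^{(j)}_{\mathrm{Hom}} \tilde f_j = 0$. Since $A_j \to A_\infty$ in $C^\infty_{loc}$ off $\Sigma$, a Kato-type inequality $\sqrt{-1}\Lambda_\omega \pbp \log(|\tilde f_j|^2 + \varepsilon) \geq -C_K$ (with $C_K$ independent of $j$ and $\varepsilon$ on each compact $K \subset M \setminus \Sigma$), together with a mean value argument along a chain of coordinate balls, propagates the bound $\sup_{K_0}|\tilde f_j| = 1$ to a uniform $L^\infty$-bound on every compact $K \subset M \setminus \Sigma$. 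Elliptic bootstrapping for the $\bar\partial$-equation then yields uniform $C^\infty_{loc}$-bounds, and along a subsequence $\tilde f_j \to f_\infty$ in $C^\infty_{loc}(M\setminus\Sigma)$ with $f_\infty$ holomorphic with respect to $\bar\partial^{(\infty)}_{\mathrm{Hom}}$. Since $C^0$-convergence on $K_0$ preserves $\sup_{K_0}|f_\infty| = 1$, $f_\infty \not\equiv 0$; a Hartogs-type extension across the real codimension-$\geq 4$ set $\Sigma$ then promotes it to a global sheaf morphism $S \to E_\infty$.

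The main obstacle I anticipate is this $L^\infty$-propagation step: the Kato constant $C_K$ depends on the local curvature $|F_{A_j}|$ which a priori degenerates near $\Sigma$, and propagating $\sup_{K_0}|\tilde f_j| = 1$ to arbitrary compact $K \subset M \setminus \Sigma$ requires the constants to stabilize as one exhausts $M\setminus\Sigma$. This is exactly where the $C^\infty_{loc}$-convergence $A_j \to A_\infty$ off $\Sigma$ (providing uniform curvature bounds on each $K$) joins forces with the uniform $L^1$ estimate from Proposition \ref{lemma2.2}: together they guarantee the rescaling $c_j$ can be chosen uniformly in $j$, preventing the rescaled limit from vanishing.
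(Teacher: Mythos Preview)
Your overall plan---rescale the holomorphic maps $f_j=g_j\circ i_0$, extract a $C^\infty_{loc}$ limit off $\Sigma$, and check nontriviality---is the right shape, but two of the key steps do not go through as written.

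\medskip

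\textbf{The $L^1$ bound on $\ln(\tr h_j+\tr h_j^{-1})$ is not available.} With $h_j=H_0^{-1}\tilde H_j$, you invoke Proposition~\ref{lemma2.2} for the heat flow started at $\tilde H_j$. That proposition controls the evolved metric against its own initial datum: it bounds $\int_M \ln\big(\tr(\tilde H_j^{-1}H_j(t))+\tr(H_j(t)^{-1}\tilde H_j)\big)$, not $\int_M\ln(\tr h_j+\tr h_j^{-1})$. Nothing in the hypotheses prevents $\tilde H_j$ from drifting arbitrarily far from $H_0$ (this is exactly why a rescaling is needed), so the quantity you claim to bound is in general unbounded in $j$. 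Consequently your derived $L^1$ bound on $\ln(1+|f_j|^2)$ is unjustified.

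\medskip

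\textbf{The propagation step fails.} Even granting the Kato-type inequality $\sqrt{-1}\Lambda_\omega\partial\bar\partial\log(|\tilde f_j|^2+\varepsilon)\ge -C_K$ on each compact $K\subset M\setminus\Sigma$, a ``chain of balls'' argument cannot upgrade $\sup_{K_0}|\tilde f_j|=1$ to a uniform bound on a larger compact $K$. A one-dimensional model already shows this: $u_j(x)=e^{jx}$ satisfies $(\log u_j)''=0\ge -C$, and after normalizing so that $\sup_{[-1,0]}u_j=1$ one still has $\sup_{[0,1]}u_j=e^{j}\to\infty$. Sub-mean-value inequalities go the wrong way for this kind of outward propagation; without a global integral bound there is no mechanism to stop $|\tilde f_j|$ from blowing up away from $K_0$.

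\medskip

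\textbf{How the paper closes these gaps.} The paper does not try to control $h_j=H_0^{-1}\tilde H_j$ directly. Instead it runs the heat flow from $\tilde H_j$ (on $\mathcal E$) and from $H_0^S$ (on $S$) to a fixed positive time, so that by Proposition~\ref{lemma2.2} the mean curvatures $|\Lambda_\omega F_{H_j(t)}|$ and $|\Lambda_\omega F_{H^S(t)}|$ are bounded in $L^\infty(M\setminus\Sigma)$ \emph{uniformly in $j$}. One then has the parabolic subsolution inequality
\[
\Big(2\sqrt{-1}\Lambda_\omega\partial\bar\partial-\frac{\partial}{\partial t}\Big)\,|i_0|^2_{H^S(t),H_j(t)}\ge 0
\]
globally on $M\setminus\Sigma$, and Lemma~\ref{lemma4.2} (global parabolic Moser iteration, whose constants depend only on the Sobolev constant of $(M,\omega)$) yields a uniform $L^\infty$--$L^1$ comparison for $|i_0|^2_{H^S(1),H_j(1)}$. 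After rescaling by the $L^1$ norm, the uniform $L^\infty$ bound is automatic, and the time-derivative bound $|\partial_t\log|i_0|^2|\le C_2$ together with the local $C^0$ estimates on $\tilde H_j^{-1}H_j(1)$ and $(H_0^S)^{-1}H^S(1)$ (which \emph{are} the legitimate output of Proposition~\ref{lemma2.2}) converts this back to the fixed norms $|\,\cdot\,|_{H_0^S,H_0}$ on each compact set. Nontriviality of the limit then follows by choosing a compact exhausting most of the volume, since the normalized $L^1$ norm equals $1$. The heat-flow smoothing is thus the essential device that replaces your attempted propagation.
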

\begin{proof}

Let $H_0^S$ be the induced metric by $H_0$ on $S$. Based on Proposition \ref{lemma2.2}, we have the solutions $H_j(t)$ and $H^{S}(t)$ of the evolved equation (\ref{inequalty2.5}) with the initial conditions  $H_j(0)=\tilde{H}_j$ and $H^S(0)=H^S_0$ on $(\mathcal{E}|_{M\setminus\Sigma},\overline{\partial}_{A_0})$ and $(S|_{M\setminus\Sigma},\overline{\partial}_{A_0})$ respectively, satisfying
\begin{equation}
\begin{split}
&|\sqrt{-1}\Lambda_\omega F_{H_j(t)}|_{L^{\infty}(M\setminus\Sigma\times[s,T],H_j(t))}\\
\leq&\frac{C(C_S(\omega),n,\tilde{C}_1,T)}{s^{2n+1}}\int_{M\setminus\Sigma}|\sqrt{-1}\Lambda_\omega F_{H_j(0)}|_{H_j(0)}\frac{\omega^n}{n!}
\end{split}
\end{equation}and
\begin{equation}
\begin{split}
&|\sqrt{-1}\Lambda_\omega F_{H^S(t)}|_{L^{\infty}(M\setminus\Sigma\times[s,T],H^S(t))}\\
\leq&\frac{C(C_S(\omega),n,\tilde{C}_1,T)}{s^{2n+1}}\int_{M\setminus\Sigma}|\sqrt{-1}\Lambda_\omega F_{H^S(0)}|_{H^S(0)}\frac{\omega^n}{n!}
\end{split}
\end{equation}for any $T>s>0$.  Calculating directly shows that
\begin{equation}\label{prop02ineq2}
\frac{\partial}{\partial t}|i_0|^2_{H^S(t),H_j(t)}=-2\langle\sqrt{-1}\Lambda_\omega F_{H_j(t)}\circ i_0,i_0\rangle+2\langle i_0\circ\sqrt{-1}\Lambda_\omega F_{H^S(t)},i_0\rangle,
\end{equation}and
\begin{equation}
(2\sqrt{-1}\Lambda_\omega\partial\overline{\partial}-\frac{\partial}{\partial t})|i_0|^2_{H^S(t),H_j(t)}\geq 0.
\end{equation}
By the same method as in the proof of (\ref{inequalty2.8}), it follows that
\begin{equation}
|i_0|^2_{H^S,H_j}\in L^{\infty}(M\setminus\Sigma\times[s,T]),
\end{equation}and
\begin{equation}
\int_{M\setminus\Sigma}|i_0|^2_{H^S(t),H_j(t)}\frac{\omega^n}{n!}\leq e^{2\tilde{C}_1(t-t_0)}\int_{M\setminus\Sigma}|i_0|^2_{H^S(t_0),H_j(t_0)}\frac{\omega^n}{n!}
\end{equation}for any $T>s>0$ and $t_0+1>t\geq t_0> 0$. Applying Lemma \ref{lemma4.2}, we obtain
\begin{equation}\label{0.71}
|i_0|^2_{H^S(t_0+\hat{s}),H_j(t_0+\hat{s})}(x)\leq \frac{C(C_S(\omega),n,t_0+1,\tilde{C}_1)}{\hat{s}^{2n+1}}\int_{M\setminus\Sigma}|i_0|^2_{H^S(t_0),H_j(t_0)}\frac{\omega^n}{n!}
\end{equation}for $x\in M\setminus\Sigma$ and any $0<\hat{s}<1$. By the uniform $L^1$-bound in the assumptions, we derive
\begin{equation}\label{prop02ineq1}
2(|\Lambda_\omega F_{H_j(t)}|_{H_j(t)}+|\Lambda_\omega F_{H^S(t)}|_{H^S(t)})(x)\leq C_2
\end{equation}for all $x\in M\setminus\Sigma$ and $t_0+1>t\geq t_0 > 0$ with $t_0$ fixed, where $C_2=C_2(t_0)$ is a uniform constant independent of $j$. Combining (\ref{prop02ineq1}) and (\ref{prop02ineq2}) yields that
\begin{equation}
-C_2\leq\frac{\partial}{\partial t}\ln |i_0|^2_{H^S(t),H_j(t)}(x)\leq C_2
\end{equation}for all $x\in M\setminus\Sigma$ and any $t$ with $t_0+1>t\geq t_0> 0$. Then
\begin{equation}\label{0.70}
|i_0|_{H^S(t_0),H_j(t_0)}^2(x)\leq e^{C_2\hat{s}}|i_0|^2_{H^S(t_0+\hat{s}),H_j(t_0+\hat{s})}(x)
\end{equation}for all $x\in M\setminus\Sigma$ and any $0<\hat{s}<1$. From (\ref{0.71}) and (\ref{0.70}), one can see
\begin{equation}\label{ineq1107}
|i_0|_{H^S(t_0),H_j(t_0)}^2(x)\leq e^{C_2\hat{s}}\frac{C(C_S(\omega),n,t_0+1,\tilde{C}_1)}{\hat{s}^{2n+1}}\int_{M\setminus\Sigma}|i_0|^2_{H^S(t_0),H_j(t_0)}\frac{\omega^n}{n!}
\end{equation}for $x\in M\setminus\Sigma$ and any $0<\hat{s}<1$.

Define the holomorphic map $\hat{f}_j:(S|_{M\setminus\Sigma},\overline{\partial}_{A_0})\to(\mathcal{E}|_{M\setminus\Sigma},\overline{\partial}_{A_j})$ by $\hat{f}_j=g_j\circ i_0$. It is easy to check that
\begin{equation}
|\hat{f}_j|_{H^S_0,H_0}=|i_0|_{H^S_0,\tilde{H}_j}.
\end{equation}Set
\begin{equation}
f_j=(\int_M |i_0|^2_{H^S(1),H_j(1)}\frac{\omega^n}{n!})^{-\frac{1}{2}}\hat{f}_j.
\end{equation}Then (\ref{ineq1107}) means that there is a constant $C_3$ such that
\begin{equation}\label{ineq11072}
\sup_{x\in M\setminus\tilde{\Sigma}}(\int_M |i_0|^2_{H^S(1),H_j(1)}\frac{\omega^n}{n!})^{-1}|i_0|^2_{H^S(1),H_j(1)}(x)\leq C_3
\end{equation}for all $j$. Set $h_j(t)=\tilde{H}_j^{-1}H_j(t)$. By Proposition \ref{lemma2.2}, we also get
\begin{equation}\label{0.79}
\begin{split}
&\int_{M\setminus\Sigma}\ln(\mathrm{tr} h_j(t)+\mathrm{tr} h_j^{-1}(t))\frac{\omega^n}{n!}\\
\leq &e^{2\tilde{C}_1t}\ln(2\mathrm{rank}(\mathcal{E}))\mathrm{Vol}(M,\omega)+\frac{e^{2\tilde{C}_1 t}-1}{2\tilde{C}_1}\int_{M\setminus\Sigma}|\Lambda_\omega F_{\tilde{H}_j}|_{\tilde{H}_j}\frac{\omega^n}{n!}.\\
\end{split}
\end{equation}On the other hand, it is not hard to verify that
\begin{equation}\label{0.80}
2\sqrt{-1}\Lambda_\omega\partial\overline{\partial}\ln(\mathrm{tr} h_j(t)+\mathrm{tr} h_j^{-1}(t))\geq-2|\Lambda_\omega F_{H_j(t)}|_{H_j(t)}-2|\Lambda_\omega F_{\tilde{H}_j}|_{\tilde{H}_j}
\end{equation}on $M\setminus\Sigma$ for all $t>0$.

For any compact subset $\Omega\subset M\setminus\Sigma$, set $\mathrm{dist}(\Omega)=\inf\{d(x,y)\,|\,x\in\Omega,\,y\in\Sigma\}>0$, where $d$ is the distance function on $(M,\omega)$. Let $B=\cup_{y\in\Sigma} B^{\omega}_{y}(\frac{1}{2}\mathrm{dist}(\Omega))$ and $\Omega'=M\setminus B$. Then we choose the cut-off function $\psi$ such that $\psi\equiv 1$ on $\Omega$, $\psi\equiv 0$ on $B$, and $|\mathrm{d}\psi|_{\omega}\leq\frac{4}{\mathrm{dist}(\Omega)}$. By the assumption, $A_j$ are locally bounded in $C^k$ for all $k\geq 0$ outside $\Sigma$, then there holds
\begin{equation}\label{0.81}
|\Lambda_\omega F_{\tilde{H}_j}|_{\tilde{H}_j}=|\Lambda_\omega F_{A_j}|_{H_0}\leq C_4
\end{equation}on $\Omega'$, where $C_4$ is a constant independent of $j$. Using (\ref{prop02ineq1}),(\ref{0.79}),(\ref{0.80}),(\ref{0.81}), the cut-off function $\psi$ and Moser's iteration, we deduce
\begin{equation}\label{0.82}
\sup_{\Omega}\ln(\mathrm{tr} h_j(1)+\mathrm{tr} h_j^{-1}(1))\leq C_5\int_{M\setminus\Sigma}\ln(\mathrm{tr} h_j(1)+\mathrm{tr} h_j^{-1}(1))\frac{\omega^n}{n!}\leq C_6,
\end{equation}where $C_5$, $C_6$ are constants independent of $j$. In a similar way, one can obtain the local $C^0$-estimate of $H^S(1)$, i.e. for any compact subset $\Omega\subset\subset M\setminus\Sigma$, there exists a constant $C_7$ such that
\begin{equation}\label{0.83}
\sup_{\Omega}\ln(\mathrm{tr}((H_0^S)^{-1}H^S(1))+\mathrm{tr}((H^S(1))^{-1}H_0^S))\leq C_7.
\end{equation}For any compact subset $\Omega\subset M\setminus\Sigma$, by (\ref{ineq11072}),(\ref{0.82}),(\ref{0.83}), we get
\begin{equation}
\sup_{x\in\Omega}|f_j|^2_{H_0^S,H_0}(x)\leq C(\Omega),
\end{equation}where $C(\Omega)$ is a constant independent of $j$.

 Because $f_j$ is $\overline{\partial}_{A_0,A_j}$-holomorphic, by the above uniform local $C^0$-bound of $f_j$ and the assumption that $A_j\to A_{\infty}$ in $C^{\infty}_{loc}$-topology outside $\Sigma$ as $j\to\infty$, the standard elliptic theory implies that there exists a subsequence of $f_j$ (also denoted by $f_j$ for simplicity) such that $f_j$ converges to a holomorphic map $f_{\infty}:(S,\overline{\partial}_{A_0})\to(E_{\infty},\overline{\partial}_{A_{\infty}})$ in $C^{\infty}_{loc}$-topology outside $\Sigma$ as $j\to\infty$.

Now we only need to prove that $f_\infty$ is non-zero. Since $\Sigma$ is of Hausdorff codimension at least $4$, for any small $\bar{\delta} >0$ satisfying $\bar{\delta} C_3<1$, we can choose a compact subset $\Omega_{\bar{\delta}}\subset M\setminus\Sigma$ such that
\begin{equation}\label{0.85}
\int_{M\setminus\Omega_{\bar{\delta}}}\frac{\omega^n}{n!}\leq\bar{\delta}.
\end{equation}Clearly, the local uniform estimates (\ref{0.82}) and (\ref{0.83}) give us that there is a positive constant $C_{\overline{\delta}}$ such that
\begin{equation}
C_{\overline{\delta}}^{-1}|i_0|^2_{H^S(1),H_j(1)}(x)\leq|i_0|^2_{H^S_0,\tilde{H}_j}(x)\leq C_{\overline{\delta}}|i_0|^2_{H^S(1),H_j(1)}(x)
\end{equation}for all $x\in \Omega_{\overline{\delta}}$ and $j$. Then
\begin{equation}
\begin{split}
\int_{\Omega_{\overline{\delta}}}|f_\infty|^2_{H^S_0,H_0}\frac{\omega^n}{n!}&=\lim_{j\to +\infty}\int_{\Omega_{\overline{\delta}}}|f_j|^2_{H^S_0,H_0}\frac{\omega^n}{n!}\\
&=\lim_{j\to +\infty}(\int_M |i_0|^2_{H^S(1),H_j(1)}\frac{\omega^n}{n!})^{-1}\int_{\Omega_{\overline{\delta}}}|i_0|^2_{H^S_0,\tilde{H}_j}\frac{\omega^n}{n!}\\
&\geq \lim_{j\to +\infty}C_{\overline{\delta}}^{-1}(\int_M |i_0|^2_{H^S(1),H_j(1)}\frac{\omega^n}{n!})^{-1}\int_{\Omega_{\overline{\delta}}}|i_0|^2_{H^S(1),H_j(1)}\frac{\omega^n}{n!}\\
&\geq C_{\overline{\delta}}^{-1}(1-\overline{\delta}C_3)>0.\\
\end{split}
\end{equation}Therefore, $f_\infty$ is a non-zero holomorphic map.

\end{proof}

\subsection{Proof of Theorem \ref{thm1.9}}

	Let $\{ \mathcal{E}_{\alpha,\beta}^{}\}$ be the HNS-filtration of the holomorphic vector bundle $(E,{\overline{\partial}}_{{A}_{0}^{}}^{})$, the associated graded object ${Gr}_{\omega}^{HNS}(E,\overline{\partial}_{A_0})={\oplus}_{\alpha=1}^l{\oplus}_{\beta=1}^{l_{\alpha}^{}}\mathcal{Q}_{\alpha,\beta}^{}$ can be uniquely determined by the isomorphism class of $(E , \overline{\partial}_{A_0})$, where $\mathcal{Q}_{\alpha,\beta}^{}=\mathcal{E}_{\alpha,\beta}^{}/\mathcal{E}_{\alpha,\beta-1}^{}$. We refer to ${\Sigma}_{alg}$ as the singular set of the HNS-filtration $\{\mathcal{E}_{\alpha,\beta}^{}\}$, which is a complex analytic subset of complex codimension at least 2. Based on Theorem \ref{consistence of HN type}, Lemma \ref{lemma3.4} and  Proposition \ref{proposition2.1}, we can prove Theorem \ref{thm1.9}. Our argument is similar as that in \cite[Section 5]{LZ1} and we include it here for completeness. We first give a sketch of the proof.

Set  $\mathcal{Q}^{\alpha,\beta}=E/\mathcal{E}_{\alpha,\beta-1}^{}$, then one can obtain a sequence of torsion-free sheaves $\{ \mathcal{Q}^{\alpha,\beta}\}$. We will use induction to verify that $\mathcal{Q}^{\alpha,\beta}$ satisfies the following three properties for each $\alpha$ and $\beta$.

{\bf Inductive hypotheses}: There is a sequence of connections $A_j^{\mathcal{Q}^{\alpha,\beta}}\in \mathcal{A}_{\mathcal{Q}^{\alpha,\beta},H_0}^{1,1}$ on the Hermitian bundle ($\mathcal{Q}^{\alpha,\beta}|_{M\setminus (\Sigma_{alg}\cup\Sigma_{an})}$, $H_0$) such that\\
 $(1)$ $A_j^{\mathcal{Q}^{\alpha,\beta}}\to A_{\infty}^{\mathcal{Q}^{\alpha,\beta}_{\infty}}$ in $C_{loc}^{\infty}$-topology off $\Sigma_{alg}\cup\Sigma_{an}$, where  ${\Sigma}_{alg}$ is the singular set of the HNS-filtration $\{\mathcal{E}_{\alpha,\beta}^{}\}$ and $\Sigma_{an}$ is the analytic bubbling set in Theorem \ref{thm1.1};
$\\$$(2)$ $ A_j^{\mathcal{Q}^{\alpha,\beta}}=g_j(A_0^{\mathcal{Q}^{\alpha,\beta}})$ for some complex gauge transformation $g_j$ and $\|\sqrt{-1}\Lambda_\omega(F_{A^{\mathcal{Q}^{\alpha,\beta}}_{H_j}})\|_{L^1(\omega,H_j)}$ is uniformly bounded in $j$, where the metric $H_j$ is defined by $H_0^{-1}H_j=g_j^{\ast_{H_0}}g_j$ and $A^{\mathcal{Q}^{\alpha,\beta}}_{H_j}$ is the Chern connection of $H_j$ on $(\mathcal{Q}^{\alpha,\beta},\overline{\partial}_{A_0^{\mathcal{Q}^{\alpha,\beta}}})$;
                                  $\\$$(3)$ $(\mathcal{Q}^{\alpha,\beta}_{\infty},\overline{\partial}_{A_{\infty}^{\mathcal{Q}^{\alpha,\beta}_{\infty}}})$ has the same HN-type as that of $(\mathcal{Q}^{\alpha,\beta},\overline{\partial}_{A_0^{\mathcal{Q}^{\alpha,\beta}}})$.

Once $\mathcal{Q}^{\alpha,\beta}$ satisfies the inductive hypotheses, by Proposition \ref{proposition2.1}, we can construct a non-zero holomorphic map  $$f_{\alpha,\beta}:(\mathcal{Q}_{\alpha,\beta},\overline{\partial}_{A_0^{\mathcal{Q}_{\alpha,\beta}}}) \to (E_\infty,\overline{\partial}_{A_{\infty}})$$ off $\Sigma_{alg}\cup\Sigma_{an}$. Utilizing Theorem \ref{consistence of HN type}, we can prove that the holomorphic map
\begin{equation}
{\oplus}_{\alpha=1}^l{\oplus}_{\beta=1}^{l_{\alpha}}f_{\alpha,\beta}: {Gr}_{\omega}^{HNS}(E,\overline{\partial}_{A_0}) \to (E_\infty,\overline{\partial}_{A_{\infty}})
\end{equation}is an isomorphism off $\Sigma_{alg}\cup\Sigma_{an}$. By Hartogs' extension theorem, the above map can extend over $\Sigma_{alg}\cup\Sigma_{an}$ as a holomorphic isomorphism from $(E_\infty,\overline{\partial}_{A_{\infty}})$ to $Gr_{\omega}^{HNS}(E,\overline{\partial}_{A_0})^{\ast\ast}$.

Now we give the details of the proof.

 \begin{proof}[Proof of Theorem \ref{thm1.9}]
We first show $\mathcal{Q}^{1,1}$ (i.e. $E$) satisfies the inductive hypotheses $(1)-(3)$.
Let $H(t)$ be the long time solution of the Hermitian-Yang-Mills flow (\ref{Flow}) on the holomorphic vector bundle $(E,\overline{\partial}_{A_0})$ with the initial metric $H_0$, and $A(t)$ be the solution of the modified Yang-Mills flow (\ref{modified flow}) on the Hermitian vector bundle $(E,H_0)$ with the initial connection $A_0$. Then we have $A(t)=\sigma(t)(A_0)$, where $\sigma(t)$ satisfies $\sigma^{\ast_{H_0}}(t)\sigma(t)=h(t)=H_0^{-1}H(t)$.

By Theorem \ref{thm1.1}, there is a sequence of connections $\{A_{j}\}$ which converge to $A_\infty$ in $C^{\infty}_{loc}$-topology outside $\Sigma_{an}$ as $j\to\infty$, where $A_{j}=a_j(A(t_j))=a_j \circ \sigma(t_j)(A_0)$ for some unitary gauge transformation $a_j$. The same calculation as (\ref{eq2.20}) shows that
\begin{equation}\label{finalequa1108}
(2\sqrt{-1}\Lambda_{\omega}\partial\overline{\partial}-\frac{\partial}{\partial t})(|\sqrt{-1}\Lambda_{\omega}F_{H(t)}|_{H(t)}^2+\zeta)^{\frac{1}{2}}\geq 0,
\end{equation}
for any $\zeta> 0$. Integrating both sides of (\ref{finalequa1108}) over $(M,\omega)$ and letting $\zeta\to 0$, one can directly deduce that
\begin{equation}\label{ineq20241108}
\|\sqrt{-1}\Lambda_\omega F_{H(t_1)}\|_{L^1(\omega,H(t_1))}\leq \|\sqrt{-1}\Lambda_\omega F_{H(t_2)}\|_{L^1(\omega,H(t_2))}
\end{equation}for any $t_1\geq t_2 \geq 0$. Using (\ref{ineq20241108}) and Theorem \ref{consistence of HN type}, one can easily check that $\mathcal{Q}^{1,1}$ satisfies the inductive hypotheses $(1)$, $(2)$, $(3)$.

Let $\mathcal{S}:=\mathcal{E}_{1,1}$ be the first stable coherent subsheaf corresponding to the HNS-filtration of the holomorphic vector bundle $(E,\overline{\partial}_{A_0})$ and $i_0:  \mathcal{S} \to (E,\overline{\partial}_{A_0})$ be the holomorphic inclusion. By Sibley's result on the resolution of singularities of the HNS-filtration (\cite[Proposition 4.3]{Sib}), there is a finite sequence of blowups along complex submanifolds, whose composition $\tilde{\pi}:\tilde{M}\to M$ enjoys the following properties: there is a filtration
\begin{equation}
0=E_{0,0}\subset E_{1,1}\subset \cdots \subset E_{l,k_l}=\tilde{E}:=\tilde{\pi}^{\ast}E
\end{equation}by subbundles, and $E_{\alpha,\beta}$ is isomorphic to $\mathcal{E}_{\alpha,\beta}$ outside $\tilde{\pi}^{-1}(\Sigma_{alg})$ for each $\alpha$ and $\beta$.

According to Proposition \ref{proposition2.1}, there is a subsequence of $f_j=a_j\circ \sigma(t_j)\circ i_0$, up to rescaling, converging to a non-zero holomorphic map $f_{\infty}: \mathcal{S}\to (E_{\infty},\overline{\partial}_{A_{\infty}})$ outside $\Sigma_{alg}\cup\Sigma_{an}$ as $j\to\infty$. Since the Hausdorff codimension of $\Sigma_{alg}\cup\Sigma_{an}$ is at least $4$, by Hartogs' extension theorem (also see \cite[Lemma 3]{Shiffman68} for this condition), $f_{\infty}$ extends to a sheaf homomorphism $f_{\infty}:\mathcal{S}\to(E_{\infty},\overline{\partial}_{A_{\infty}})$ on the whole $M$.

Denote by $\xi^{j}_1$ the orthogonal projection onto $a_j\circ\sigma(t_j)(\mathcal{S})$ with respect to $H_0$. Applying Lemma \ref{lemma3.4}, by Uhlenbeck and Yau's regularity statement of $L^2_1$-subbundles (\cite{UY86}), we know that $\xi_1^j$ converges to $\xi_1^{\infty}$ strongly in ${L}^{p}\cap{L}_{1,loc}^{2}$ off ${\Sigma}_{an}$ as $j\rightarrow +\infty$, and $\xi_1^{\infty}$ determines a subsheaf $\mathcal{E}_{1,1}^\infty$ of $(E_{\infty},\overline{\partial}_{A_{\infty}})$ with $\rank(\mathcal{E}_{1,1}^\infty)=\rank(\mathcal{S})$ and $\mu_{\omega}(\mathcal{E}_{1,1}^\infty)=\mu_{\omega}(\mathcal{S})$. On the other hand, the fact that $\xi_1^j\circ f_j=f_j$ implies that $\xi_1^{\infty}\circ f_\infty=f_\infty$ and then $f_{\infty}: \mathcal{S} \to \mathcal{E}_{1,1}^\infty$. Moreover, Theorem \ref{consistence of HN type} states that $(E_{\infty},\overline{\partial}_{A_{\infty}})$ and $(E,\overline{\partial}_{A_0})$ have the same HN-type, and so $\mathcal{E}_{1,1}^\infty$ is $\omega$-semistable. Notice that $\mathcal{S}$ is $\omega$-stable. Due to \cite[$\mathrm{V}$.7.11,7.12]{Kobayashi}, the nonzero holomorphic map $f_{\infty}$ must be isomorphisic, i.e.
\begin{equation}
 \mathcal{S}\cong \mathcal{E}_{1,1}^{\infty},
\end{equation}
and $\mathcal{E}_{1,1}^{\infty}$ is an $\omega$-stable subsheaf of $(E_{\infty},\overline{\partial}_{A_{\infty}})$. This is the first step in the induction.

Now we are going to use the fact $\mathcal{Q}^{1,1}$ satisfies $(1)-(3)$ to infer that $\mathcal{Q}^{1,2}$ also satisfies the inductive hypotheses. For simplicity, denote $\mathcal{Q}^{1,2}=E/\mathcal{S}$ as $\mathcal{Q}$. Then we have $Gr_\omega^{HNS}(E,\overline{\partial}_{A_0})=\mathcal{S}\oplus Gr_\omega^{HNS}(\mathcal{Q},\overline{\partial}_{A_0^\mathcal{Q}})$. Since $H_\infty$ is a direct sum of admissible Hermitian-Einstein metrics, there is a holomorphic orthogonal decomposition $(E_{\infty},\overline{\partial}_{A_{\infty}})=\mathcal{E}_{1,1}^{\infty}\oplus \mathcal{Q}_{\infty}$, where $\mathcal{Q}_{\infty}=(\mathcal{E}_{1,1}^{\infty})^{\perp}$ on $M\setminus(\Sigma_{alg}\cup\Sigma_{an})$.

Let $\{e_a\}$ be a local frame of $\mathcal{S}$, and $H_{j,a\overline{b}}=\langle f_j(e_a),f_j(e_b)\rangle_{H_0}$. One can write the orthogonal projection $\xi_{1}^{j}$ as
\begin{equation}
     \xi_1^{j}(X)=\langle X,f_j(e_b)\rangle_{H_0} H_{j}^{a\overline{b}}f_j(e_a)
\end{equation}for any $X\in \Gamma(E)$, where $(H_j^{a\overline{b}})$ is the inverse of the matrix $(H_{j,a\overline{b}})$. Because $f_j\to f_{\infty}$ in $C_{loc}^{\infty}$-topology off $\Sigma_{alg}\cup\Sigma_{an}$ as $j\to\infty$ and $f_\infty$ is injective, we can show that $\xi_{1}^{j}\to\xi_{1}^{\infty}$ in $C_{loc}^{\infty}$-topology off $\Sigma_{alg}\cup\Sigma_{an}$ as $j\to\infty$. Let $\hat{\xi}_1: E_{\infty}\to E_{\infty}$ be the orthogonal projection onto $\mathcal{E}_{1,1}^{\infty}$ with respect to $H_\infty$. By \cite[Lemma 5.12]{daskalopoulos1992topology}, we can choose a sequence of unitary gauge transformations $\{u_j\}$ such that $\xi_1^{j}=u_j\circ \hat{\xi}_1 \circ u_j^{-1}$ and $u_j\to \mathrm{Id}$ in $C_{loc}^{\infty}$-topology on $M\setminus(\Sigma_{alg}\cup\Sigma_{an})$ as $j\to\infty$. It is easy to verify that $u_j(\mathcal{Q}_{\infty})=(\xi_1^{j}(E))^{\perp}$. Note that there are the bundle isomorphism $p^{\ast_{H_0}}:\mathcal{Q}\to \mathcal{S}^{\perp}$ and the unitary gauge transformation $u_0:\mathcal{Q}_{\infty}\to \mathcal{S}^{\perp}$. Consider the induced connections on $\mathcal{Q}$
\begin{equation}
     D_{A_j^\mathcal{Q}}=(p^{\ast_{H_0}})^{-1}\circ a_0^{-1}\circ u_0\circ \hat{\xi}_1^{\perp}\circ u_j^{-1}\circ a_j\circ D_{A(t_j)}\circ a_j^{-1}\circ u_j\circ \hat{\xi}_1^{\perp}\circ u_0^{-1}\circ a_0\circ p^{\ast_{H_0}},
\end{equation}and the complex gauge transformation of $\mathcal{Q}$
\begin{equation}
         h_j=(p^{\ast_{H_0}})^{-1}\circ a_0^{-1}\circ u_0\circ \hat{\xi}_1^{\perp}\circ u_j^{-1}\circ a_j \circ \sigma(t_j) \circ p^{\ast_{H(t_j)}}.
\end{equation}Then
\begin{equation}
\overline{\partial}_{A_j^\mathcal{Q}}=h_j\circ \overline{\partial}_{A_0^\mathcal{Q}} \circ h_j^{-1}
\end{equation}and
\begin{equation}
\partial_{A_j^\mathcal{Q}}=(h_j^{\ast_{H_0}})^{-1}\circ \partial_{A_0^\mathcal{Q}} \circ h_j^{\ast_{H_0}}.
\end{equation}
From the definition, one can directly deduce that $((p^{\ast_{H_0}})^{-1}\circ a_0^{-1}\circ u_0)^{\ast}(A_j^\mathcal{Q}) \to A_{\infty}^{\mathcal{Q}_{\infty}}$ in $C_{loc}^{\infty}$-topology on $M\setminus(\Sigma_{alg}\cup\Sigma_{an})$, as $j\to\infty$. Hence $\mathcal{Q}$ satisfies the inductive hypothesis $(1)$. Meanwhile, straightforward calculation shows that $h_j^{\ast_{H_0}}h_j=(H_0^{\mathcal{Q}})^{-1}H^{\mathcal{Q}}(t_j)$, where $H^{\mathcal{Q}}(t)$ denotes the induced metric on the quotient $\mathcal{Q}$ by $H(t)$. Then we obtain
\begin{equation}
\begin{split}
\|\sqrt{-1}\Lambda_\omega F_{H^{\mathcal{Q}}(t_j)}\|_{L^1(\omega,H^{\mathcal{Q}}(t_j))}&=\|\sqrt{-1}\Lambda_\omega F_{A^{\mathcal{Q}_{t_j}}(t_j)}\|_{L^1(\omega,H_0)}\\
&=\|\sqrt{-1}\Lambda_\omega F_{A_j^{\mathcal{Q}_j}}\|_{L^1(\omega,H_0)},\\
\end{split}
\end{equation}where $\mathcal{Q}_{t_j}=\sigma(t_j)(\mathcal{Q})$, $\mathcal{Q}_j=a_j\circ\sigma(t_j)(\mathcal{Q})$.

Now let's consider the Gauss-Codazzi equation on $\mathcal{Q}_j$ with respect to the metric $H_0$ and the Chern connection $A_j$:
\begin{equation}
    F_{A_j^{\mathcal{Q}_j}}=(\xi_1^{j})^{\perp}\circ F_{A_j}\circ (\xi_1^{j})^{\perp}+\partial_{A_j} \xi_1^{j}\wedge\overline{\partial}_{A_j}\xi_1^{j}.
\end{equation}Then we have
\begin{equation}
\begin{split}
    &\int_{M\setminus(\Sigma_{alg})} |\sqrt{-1}\Lambda_{\omega}F_{A_j^{\mathcal{Q}_j}}|_{H_0}\frac{\omega^n}{n!}\\
    =&\int_{M\setminus(\Sigma_{alg})} |(\xi_1^{j})^{\perp}\sqrt{-1}\Lambda_{\omega}F_{A_j}(\xi_1^{j})^{\perp}+\sqrt{-1}\Lambda_{\omega}(\partial_{A_j} \xi_1^{j}\wedge\overline{\partial}_{A_j}\xi_1^{j})|_{H_0}\frac{\omega^n}{n!}\\
    \leq & \int_{M\setminus(\Sigma_{alg})} \{|\sqrt{-1}\Lambda_{\omega}F_{A_j}|_{H_0}+|\overline{\partial}_{A_j}\xi_1^{j}|^2_{H_0}\} \frac{\omega^n}{n!}.\\
\end{split}
\end{equation}By the degree formula of $\mathcal{S}$ (\cite[Lemma 3.2]{SIM}) and Gauss-Codazzi equation, we obtain that $\int_{M\setminus(\Sigma_{alg})} |\overline{\partial}_{A_j}\xi_1^{j}|^2_{H_0} \frac{\omega^n}{n!}$ is uniformly bounded. Then it implies that $\int_{M\setminus(\Sigma_{alg})} |\sqrt{-1}\Lambda_{\omega}F_{A_j^{\mathcal{Q}_j}}|_{H_0}\frac{\omega^n}{n!}$ is uniformly bounded. So the inductive hypothesis $(2)$ is satisfied. Obviously Theorem \ref{consistence of HN type} tells us that the inductive hypothesis $(3)$ is also valid. Hence $\mathcal{Q}$ satisfies the inductive hypotheses $(1)-(3)$.


Repeating the above argument by induction, we conclude
\begin{equation}
E_{\infty} \cong Gr_{\omega}^{HNS}(E,\overline{\partial}_{A_0})=\oplus_{\alpha=1}^{l}\oplus_{\beta=1}^{k_\alpha}\mathcal{Q}_{\alpha,\beta}
\end{equation}on $M\setminus(\Sigma_{alg}\cup\Sigma_{an})$. By Hartogs' extension theorem and the normality of reflexive sheaves, there exists a sheaf isomorphism
\begin{equation}
f:  (E_{\infty},\overline{\partial}_{A_{\infty}}) \to Gr_{\omega}^{HNS}(E,\overline{\partial}_{A_0})^{\ast\ast}
\end{equation}on $M$.
We are done.
     \end{proof}



\end{document}